\newtheorem{theorem}{Theorem}
\newtheorem{theorem*}{Theorem}
\newtheorem{definition}[theorem]{Definition}
\newtheorem{lemma}[theorem]{Lemma}
\newtheorem{corollary}[theorem]{Corollary}
\newtheorem{remark}[theorem]{Remark}
\newtheorem{example}[theorem]{Example}
\DeclareMathOperator{\R}{\mathbb{R}}
\DeclareMathOperator{\gamadi}{\textcolor{black}{\mathcal{S}}}
\DeclareMathOperator{\dxi}{\mathrm{d} \xi_0 \mathrm{d}\xi'}
\DeclareMathOperator{\dt}{\mathrm{d}t}
\DeclareMathOperator{\timeint}{\int \limits_0^\infty}
\begin{document}
\title{Time domain boundary elements for dynamic contact problems}
\author{ Heiko Gimperlein\thanks{Maxwell Institute for Mathematical Sciences and Department of Mathematics, Heriot--Watt University, Edinburgh, EH14 4AS, United Kingdom, email: h.gimperlein@hw.ac.uk.}  \thanks{Institute for Mathematics, University of Paderborn, Warburger Str.~100, 33098 Paderborn, Germany.} \and Fabian Meyer\thanks{Institute of Applied Analysis and Numerical Simulation, Universit\"{a}t Stuttgart, Pfaffenwaldring 57, 70569 Stuttgart, Germany.} \and Ceyhun \"{O}zdemir\thanks{Institute of Applied Mathematics, Leibniz University Hannover, 30167 Hannover, Germany. \newline H.~G.~acknowledges support by ERC Advanced Grant HARG 268105 and the EPSRC Impact Acceleration Account. } \and Ernst P.~Stephan${}^\S$}
\date{\emph{(dedicated to Erwin Stein on the occasion of his 85th birthday)}}

\providecommand{\keywords}[1]{{\textit{Key words:}} #1}

\maketitle \vskip 0.5cm
\begin{abstract}
\noindent \textcolor{black}{This article considers a unilateral contact problem for the wave equation. The problem is reduced to a variational inequality for the Dirichlet-to-Neumann operator for the wave equation on the boundary, which is solved in a saddle point formulation using boundary elements in the time domain. As a model problem, also a variational inequality for the single layer operator is considered. A priori estimates are obtained for Galerkin approximations {both to the variational inequality and the mixed formulation} in the case of a flat contact area, where the existence of solutions to the continuous problem is known. Numerical experiments demonstrate the performance of the proposed {mixed method}. They indicate the stability and convergence beyond flat geometries.  }
\end{abstract}
\keywords{boundary element method; variational inequality; mixed method; a priori error estimates; wave equation.}

\section{Introduction}
\label{intro}

\textcolor{black}{Contact problems play an important role in numerous applications in mechanics, from fracture dynamics and crash tests to rolling car tires \cite{wrig}. As the contact takes place at the interface of two materials, for time-independent problems boundary elements and coupled finite / boundary elements provide an efficient and much-studied tool for numerical simulations \cite{gwinsteph, ency}. The analysis of such problems is well-understood in the context of elliptic variational inequalities.}\\

\textcolor{black}{While contact for time-dependent problems is of clear practical relevance, neither its analysis nor rigorous boundary element methods have been much explored. There is an extensive computational literature, including \cite{doyen2, doyen,  hauret2,hauret, cont2, cont1}, but analytically even the existence of solutions to these free boundary problems is only known for flat contact area \cite{cooper, lebeau}. \textcolor{black}{Some rigorous results have recently been obtained for Nitsche stabilized finite elements \cite{chouly}.}}\\

\textcolor{black}{In this work we propose a time domain boundary element method for a $3d$ dynamic contact problem in the case of the scalar wave equation, as a model problem for elasticity. We provide a priori error estimates for our numerical scheme in the case of a flat contact area, and our numerical experiments indicate the convergence and efficiency  also for curved contact geometries. Motivating references from the time-independent setting include \cite{gatica, eck}.}\\


\textcolor{black}{For the precise statement of the problem, consider} a  Lipschitz domain $\Omega \subset \mathbb{R}^n$ with boundary $\Gamma = \partial \Omega$, \textcolor{black}{where either $\Omega$ is bounded or $\Omega = \mathbb{R}^{n-1}$.} Let $G$ be a bounded Lipschitz subset of $\Gamma$. We consider a unilateral contact problem for the wave equation for the displacement $w : \mathbb{R}\times\Omega \to \mathbb{R}$. {It corresponds to a simplified model for a crack in $G$ between $\Omega$ and a non-penetrable material in $\mathbb{R}^n\setminus \overline{\Omega}$. The contact conditions for non-penetration are described} in terms of the traction $-\mu \frac{\partial w}{\partial \nu}\big|_{G}$ and prescribed forces $h$: 
\begin{align}\label{contactprob}
\begin{cases}
w\textcolor{black}{|_{\mathbb{R}\times G}} \geq 0\ ,\ -\mu \frac{\partial w}{\partial \nu}\textcolor{black}{\big|_{\mathbb{R}\times G}}\geq h\ ,\\ w\textcolor{black}{|_{\mathbb{R}\times G}}>0\ \Longrightarrow\ -\mu \frac{\partial w}{\partial \nu}\textcolor{black}{\big|_{\mathbb{R}\times G}}=h\ .
\end{cases}
\end{align}
\\ The full system of equations for the contact problem is given by:
\begin{align}\label{1.1}
\begin{cases}
\frac{\partial^2 w}{\partial t^2}=c_s^2 \Delta w\ ,\hspace{1cm}  &\text{for}\ (t,x)\in\mathbb{R}\times\Omega\ ,
\\ w=0\ ,\hspace{1cm}  &\text{on} \ \textcolor{black}{\mathbb{R}\times} \Gamma \setminus G \ ,
\\ w\geq0\ ,\ -\mu \frac{\partial w}{\partial \nu}\geq h\ ,\hspace{1cm}  &\text{on} \ \textcolor{black}{\mathbb{R}\times}G \ ,
\\\big(-\mu \frac{\partial w}{\partial \nu}-h)\ w=0, &\text{on} \ \textcolor{black}{\mathbb{R}\times}G \ ,
\\ w=0, &\text{for}\ (t,x)\in(-\infty,0)\times\Omega\ .
\end{cases}
\end{align} 
\textcolor{black}{where $c_s$ denotes the speed of the wave.} \textcolor{black}{While we focus on the physically relevant three dimensional case, $n=3$, the analysis of the numerical schemes can be adapted to $n=2$.} Like for time-independent contact, a formulation as a nonlinear problem on the contact area $G$ leads to efficient numerical approximations.  \\

\textcolor{black}{In this article we formulate \eqref{1.1} as a variational inequality on $G$ in terms of the Dirichlet-to-Neumann operator \eqref{1.4} for the wave equation. Similar to time-independent problems, the Dirichlet-to-Neumann operator \textcolor{black}{is} computed in terms of boundary integral operators as $\frac{1}{2}(W-(1-K')V^{-1}(1-K))$, where $V, K, K'$ and $W$ are the layer potentials defined in \eqref{Voperator}-\eqref{Woperator}.} {\textcolor{black}{Because the contact area and contact forces are often relevant in applications, we replace the variational inequality for the Dirichlet-to-Neumann operator by an equivalent mixed system}, which we discretize with a time domain Galerkin boundary element method. {The  resulting discretized nonlinear inequality in space-time simultaneously approximates the displacement $w$ and the contact forces $-\mu \frac{\partial w}{\partial \nu}$ on $G$. It is} solved with a Uzawa algorithm, either as a time-stepping scheme or in space-time.}\\

{The resulting boundary element method is analyzed in the case of a flat contact area, a situation where the existence of solutions to the contact problem \eqref{1.1} is known. We obtain a priori estimates for the numerical error, both for the variational inequality and a mixed formulation:
\textcolor{black}{
\begin{theorem*}
 Let $h \in H^{\frac{3}{2}}_\sigma(\mathbb{R}^+,{H}^{-\frac{1}{2}}(G))$ and let $u \in H^{\frac{1}{2}}_\sigma(\mathbb{R}^+,\tilde{H}^{\frac{1}{2}}(G))^+$, respectively $u_{\Delta t,h}\in \tilde{K}_{t,h}^+{\subset H^{\frac{1}{2}}_\sigma(\mathbb{R}^+,\tilde{H}^{\frac{1}{2}}(G))^+ }$ be the solutions of the continuous variational inequality \eqref{contVI}, respectively its discretization \eqref{discreteVI}. Then the following estimate holds:
 \begin{align}
 \|u-u_{\Delta t,h}\|_{-\frac{1}{2},\frac{1}{2},\sigma,\star }^2 \lesssim_\sigma \inf \limits_{\phi_{\Delta t,h}\in \tilde{K}_{t,h}^+}(\textcolor{black}{\|h-p_Q \gamadi_\sigma u\|_{\frac{1}{2},-\frac{1}{2},\sigma}} \|u-\phi_{\Delta t,h}\|_{-\frac{1}{2},\frac{1}{2},\sigma,\star}+ 
 \|u-\phi_{\Delta t,h}\|_{\frac{1}{2},\frac{1}{2},\sigma,\star}^2).
\end{align}  
 \end{theorem*}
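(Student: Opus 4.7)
My plan is to adapt Falk's classical approach for elliptic variational inequalities to the time-domain boundary-integral setting of Bamberger and Ha Duong. The first step is to exploit the space-time coercivity (G{\aa}rding-type estimate) for the Dirichlet-to-Neumann operator $\gamadi_\sigma$ developed earlier in the paper, which gives
\[
\|u-u_{\Delta t,h}\|_{-\frac{1}{2},\frac{1}{2},\sigma,\star}^{2}\;\lesssim_\sigma\; \langle \gamadi_\sigma(u-u_{\Delta t,h}),u-u_{\Delta t,h}\rangle,
\]
so the task reduces to estimating the bilinear form on the right by quantities of the form appearing in the statement.

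Next I would combine the continuous variational inequality \eqref{contVI} and its discrete counterpart \eqref{discreteVI} in the classical manner: test the continuous problem with $v=u_{\Delta t,h}$, which is admissible because $\tilde{K}_{t,h}^+\subset H^{1/2}_\sigma(\mathbb{R}^+,\tilde{H}^{1/2}(G))^+$, and test the discrete problem with an arbitrary $\phi_{\Delta t,h}\in \tilde{K}_{t,h}^+$. Adding the two inequalities and rearranging, the projection $p_Q$ in the discrete formulation produces the consistency term $h-p_Q\gamadi_\sigma u$, leaving
\[
\langle \gamadi_\sigma(u-u_{\Delta t,h}),u-u_{\Delta t,h}\rangle \;\leq\; \langle \gamadi_\sigma(u-u_{\Delta t,h}),u-\phi_{\Delta t,h}\rangle + \langle h-p_Q\gamadi_\sigma u,\phi_{\Delta t,h}-u\rangle.
\]
The first term on the right is bounded via Cauchy--Schwarz using the continuity of $\gamadi_\sigma$ between the shifted Sobolev scales by $\|u-u_{\Delta t,h}\|_{-\frac{1}{2},\frac{1}{2},\sigma,\star}\,\|u-\phi_{\Delta t,h}\|_{\frac{1}{2},\frac{1}{2},\sigma,\star}$, after which Young's inequality absorbs the first factor into the coercive left-hand side and leaves the quadratic term $\|u-\phi_{\Delta t,h}\|_{\frac{1}{2},\frac{1}{2},\sigma,\star}^2$ from the theorem. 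The consistency term is bounded directly by the duality pairing $\|h-p_Q\gamadi_\sigma u\|_{\frac{1}{2},-\frac{1}{2},\sigma}\,\|u-\phi_{\Delta t,h}\|_{-\frac{1}{2},\frac{1}{2},\sigma,\star}$, producing the first contribution in the estimate; taking the infimum over $\phi_{\Delta t,h}\in \tilde{K}_{t,h}^+$ concludes.

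The principal difficulty is the asymmetric, regularity-losing nature of the space-time coercivity: Bamberger--Ha Duong-type estimates only hold in norms with mixed and shifted time and space indices, and the bilinear form $\langle \gamadi_\sigma\cdot,\cdot\rangle$ is not symmetric, its formal adjoint involving time reversal. Matching the weaker coercivity norm $\|\cdot\|_{-\frac{1}{2},\frac{1}{2},\sigma,\star}$ on the left with the stronger continuity norm $\|\cdot\|_{\frac{1}{2},\frac{1}{2},\sigma,\star}$ required on the right, so that Young's inequality can be applied cleanly and all constants remain $\sigma$-uniform or at worst polynomial in $\sigma$, is the delicate part. A further subtlety is to verify that the consistency term really emerges paired with $\phi_{\Delta t,h}-u$ rather than with $u-u_{\Delta t,h}$, which would defeat the absorption step; this depends on the precise way the $L^2$-projection $p_Q$ enters the discrete formulation and requires a careful expansion of the cross terms.
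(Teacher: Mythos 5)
Your proposal is correct and follows essentially the same route as the paper's own proof of Theorem \ref{apriori1}: start from the coercivity of Theorem \ref{mappingProperties}, combine the continuous and discrete variational inequalities by testing with $v=u_{\Delta t,h}$ and $v_{\Delta t,h}=\phi_{\Delta t,h}$ (the paper keeps $\phi$ general in \eqref{umgestellt} and only later sets $\phi=u_{\Delta t,h}$, which is the same step), and then estimate the two resulting terms in \eqref{IntoEq} by duality, the mapping property $p_Q\gamadi_\sigma: H^{-1/2}_\sigma(\mathbb{R}^+,\tilde H^{1/2}(G))\to H^{-1/2}_\sigma(\mathbb{R}^+,H^{-1/2}(G))$, and Young's inequality. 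The one slip is a sign: after the cancellations the consistency term comes out as $\langle h-p_Q\gamadi_\sigma u,\,u-\phi_{\Delta t,h}\rangle_\sigma$ rather than $\langle h-p_Q\gamadi_\sigma u,\,\phi_{\Delta t,h}-u\rangle_\sigma$, but since you immediately bound it by the duality pairing this is immaterial for the final estimate.
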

This result is shown as Theorem \ref{apriori1} in Section \ref{apriori}.
}
\textcolor{black}{\begin{theorem*}
The discrete mixed formulation \eqref{discMixedFormulation} of the contact problem admits a unique solution. The following a priori estimates hold: 
\begin{align}
\| \lambda -\lambda_{\Delta t_2,h_2} \|_{0,-\frac{1}{2}, \sigma}  &\lesssim  \inf \limits_{\tilde{\lambda}_{\Delta t_2,h_2}} \| \lambda - \tilde{\lambda}_{\Delta t_2,h_2} \|_{0,-\frac{1}{2},\sigma} +(\Delta t_1)^{-\frac{1}{2}}  \|u- u_{\Delta t_1,h_1}\|_{-\frac{1}{2},\frac{1}{2},\sigma, \ast} \ ,\\
\|u-u_{\Delta t_1,h_1}\|_{-\frac{1}{2},\frac{1}{2},\sigma, \ast} &\lesssim_\sigma \inf \limits_{v_{\Delta t_1,h_1}}
\|u-v_{\Delta t_1,h_1}\|_{\frac{1}{2},\frac{1}{2},\sigma, \ast}\nonumber \\& \qquad
+ \inf \limits_{\tilde{\lambda}_{\Delta t_2,h_2}}\left\{\|\tilde{\lambda}_{\Delta t_2,h_2} - \lambda\|_{\frac{1}{2},-\frac{1}{2},\sigma} +\|\tilde{\lambda}_{\Delta t_2,h_2} - {\lambda}_{\Delta t_2,h_2}\|_{\frac{1}{2},-\frac{1}{2},\sigma}\right\} \ . 
\end{align}
\end{theorem*}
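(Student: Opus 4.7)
The plan is to adapt the classical Brezzi theory for mixed saddle-point problems to the present time-dependent variational inequality posed in the $\sigma$-weighted space-time Sobolev norms of the retarded layer potentials. Three ingredients are needed, and I would treat them in the order: (i) well-posedness of the discrete mixed system, (ii) the primal error bound, (iii) the dual error bound.

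First, I would establish existence and uniqueness for the discrete mixed formulation. This rests on (a) coercivity of the bilinear form associated with the single layer operator in $\|\cdot\|_{-\frac{1}{2},\frac{1}{2},\sigma,\ast}$ (inherited from the continuous coercivity already used for Theorem \ref{apriori1}), and (b) a discrete inf-sup condition for the bilinear form $b(\cdot,\cdot)$ pairing the two Galerkin spaces for $u_{\Delta t_1,h_1}$ and $\lambda_{\Delta t_2,h_2}$. Granted these, existence of a saddle point follows from a Lions--Stampacchia argument on the product space (or equivalently from the Uzawa iteration whose convergence is being used numerically), and uniqueness from strict coercivity in the kernel of $b$.

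Second, for the primal estimate I would subtract the continuous primal inequality from the discrete one, insert an arbitrary $v_{\Delta t_1,h_1}$ by adding and subtracting, and use coercivity of $a$ to obtain, schematically,
\[
\|u-u_{\Delta t_1,h_1}\|_{-\frac{1}{2},\frac{1}{2},\sigma,\ast}^{2} \lesssim_\sigma a(u-u_{\Delta t_1,h_1},u-v_{\Delta t_1,h_1}) + b(u-v_{\Delta t_1,h_1},\lambda-\lambda_{\Delta t_2,h_2}).
\]
Continuity of $a$ and $b$ in the relevant $\sigma$-weighted norms combined with Young's inequality produces precisely the two infima appearing in the statement, with the triangle inequality splitting $\lambda-\lambda_{\Delta t_2,h_2}$ into $\lambda-\tilde\lambda_{\Delta t_2,h_2}$ and $\tilde\lambda_{\Delta t_2,h_2}-\lambda_{\Delta t_2,h_2}$.

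Third, for the dual estimate I would apply the discrete inf-sup condition to $\lambda_{\Delta t_2,h_2}-\tilde\lambda_{\Delta t_2,h_2}$ for arbitrary $\tilde\lambda_{\Delta t_2,h_2}$ in the discrete multiplier space, and rewrite the resulting supremum over the discrete primal space via the first mixed equation as $a(u-u_{\Delta t_1,h_1},v_{\Delta t_1,h_1})$ modulo consistency. Bounding $a$ by continuity in the natural norms of the single layer operator requires passing from $\|\cdot\|_{0,-\frac{1}{2},\sigma}$ to the $\|\cdot\|_{\frac{1}{2},\cdot,\sigma}$ scale, and the half-order mismatch in time is absorbed by an inverse estimate in the time-discretization, which is precisely the source of the $(\Delta t_1)^{-\frac{1}{2}}$ factor. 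A final triangle inequality yields the first inequality.

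The main obstacle is the uniform discrete inf-sup condition itself: unlike in the elliptic contact theory of \cite{gatica, eck}, the pairing here must be controlled in anisotropic $\sigma$-weighted space-time norms, which I expect to verify via a Fourier--Laplace transform along the Bamberger--Ha Duong lines, using that the discrete spaces of tensor-product splines in time and piecewise polynomials in space admit a Fortin operator compatible with those norms. A secondary technical point, already visible in Theorem \ref{apriori1}, is tracking the non-conformity of the discrete cone $\tilde K_{t,h}^+$ across both temporal and spatial discretizations, handled by a Strang-type correction whose consistency contribution must remain of the same order as the best approximation errors.
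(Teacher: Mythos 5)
Your outline follows the same Brezzi-type route as the paper's proof of Theorem \ref{mixedtheorem}: coercivity of $a(\cdot,\cdot)=\langle\gamadi_\sigma\cdot,\cdot\rangle_\sigma$ for the primal error, the discrete inf-sup for the multiplier, Galerkin orthogonality from the equality (\ref{discMixedFormulation}a) minus (\ref{mixedFormulationProof}a) to couple them, and an inverse estimate in time to pass from $\|\cdot\|_{1/2,\cdot,\sigma}$ down to $\|\cdot\|_{0,\cdot,\sigma}$ at the cost of $(\Delta t_1)^{-1/2}$. All four ingredients are the paper's ingredients, so the approach is essentially the same. Three corrections are nonetheless in order.

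First, the schematic primal bound is off. Coercivity applied to $u-u_{\Delta t_1,h_1}$ together with Galerkin orthogonality $a(u-u_{\Delta t_1,h_1},V_{\Delta t_1,h_1})=\langle\lambda-\lambda_{\Delta t_2,h_2},V_{\Delta t_1,h_1}\rangle_\sigma$ gives
\begin{align*}
\|u-u_{\Delta t_1,h_1}\|^2_{-\frac12,\frac12,\sigma,\ast}\lesssim_\sigma a(u-u_{\Delta t_1,h_1},u-v_{\Delta t_1,h_1})+\langle\lambda-\lambda_{\Delta t_2,h_2},\,v_{\Delta t_1,h_1}-u_{\Delta t_1,h_1}\rangle_\sigma\,,
\end{align*}
not $b(u-v_{\Delta t_1,h_1},\lambda-\lambda_{\Delta t_2,h_2})$. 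With $v_{\Delta t_1,h_1}-u_{\Delta t_1,h_1}$ in the second slot you would run into a circular dependency on $\|u-u_{\Delta t_1,h_1}\|$. The paper avoids this by first bounding the discrete quantity $\|u_{\Delta t_1,h_1}-v_{\Delta t_1,h_1}\|^2_{-\frac12,\frac12,\sigma,\ast}$ by coercivity (so the term appearing in the bilinear form is also the one on the left), and only afterwards applying a triangle inequality.

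Second, the Strang-type non-conformity correction you anticipate is not needed. The primal space $\tilde V_{\Delta t_1,h_1}^{1,1}$ is conforming and the multiplier space $(V_{\Delta t_2,h_2}^{0,0})^+$ embeds into $H^{1/2}_\sigma(\mathbb{R}^+,H^{-1/2}(G))^+$ through $L^2(Q)$, as the paper uses elsewhere. The cone constraint in the mixed formulation is enforced through the multiplier, not through the primal space, so no consistency term appears; the non-conformity concern is relevant for the direct inequality of Theorem \ref{apriori1}, not here.

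Third, your proposed route to the discrete inf-sup — a Fortin operator checked in the $\sigma$-weighted anisotropic norms via Fourier--Laplace methods — is a genuine alternative, but it is not what the paper does. Theorem \ref{discInfSup} instead solves an auxiliary Neumann problem with datum $\lambda_{\Delta t_2,h_2}$, approximates the trace $z$ in the primal space, uses the coercivity and continuity of the Neumann-to-Dirichlet operator from the symbol estimates \eqref{symbolest1}--\eqref{symbolest2}, and closes with an inverse estimate. Either way the inf-sup constant is only uniform under the mesh-ratio condition $\max\{h_1,\Delta t_1\}/\min\{h_2,\Delta t_2\}<C$; a Fortin operator would have to reproduce the same restriction. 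Finally, a naming slip: the operator in this theorem is the Dirichlet-to-Neumann operator $\gamadi_\sigma$; the single layer operator $V$ governs the punch problem in Section \ref{Vsection}.
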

The result is shown as Theorem \ref{mixedtheorem} in Section \ref{mixedsection}.}\\

For the mixed problem, a key part of the proof is an inf-sup condition for the space-time discretization \textcolor{black}{in Theorem \ref{discInfSup}}. We also demonstrate the convergence of the Uzawa algorithms. Numerical experiments for the mixed formulation confirm the theoretical results and indicate the efficiency and convergence of our approach, beyond flat geometries.} \\

{In addition to the contact problem \eqref{1.1}, as a simpler test case we also consider a punch problem, {which models a rigid body indenting an elastic half-space. The relevant boundary conditions for the wave equation are given by:}  
\begin{align}\label{stampprob1}
\begin{cases}
{-\mu \frac{\partial w}{\partial \nu}\big|_{\textcolor{black}{\mathbb{R}\times}G} \geq 0\ ,\ w\big|_{\textcolor{black}{\mathbb{R}\times}G}\geq h\ ,}\\ {-\mu \frac{\partial w}{\partial \nu}\big|_{\textcolor{black}{\mathbb{R}\times}G}>0\ \Longrightarrow\ w\big|_{\textcolor{black}{\mathbb{R}\times}G}=h\ .}
\end{cases}
\end{align}
For the half-space they lead} to a variational inequality for the single layer operator $V$ \textcolor{black}{defined in \eqref{Voperator}, instead of the Dirichlet-to-Neumann operator \eqref{1.4} in the contact problem}. We obtain similar theoretical and numerical results in this case. \textcolor{black}{In particular, a priori estimates are obtained for the Galerkin approximation of the variational inequality (Theorem \ref{apriori2}) and an equivalent mixed formulation (Theorem \ref{mixedVthm}).} \\

\textcolor{black}{Our approach also relates to the recent interest in coupled and nonlinear interface problems for wave propagation, solved by time domain boundary element methods. In particular, we refer to the fundamental articles \cite{ajrt, bls} for the coupling of FEM and BEM, as well as \cite{aimi1} for an energetic Galerkin formulation of the coupling. Reference \cite{banjai2} considers a nonlinear boundary \textcolor{black}{value} problem. A first analysis of the time domain Dirichlet-to-Neumann operator goes back to \cite{banjai}.}\\

\textcolor{black}{The current work provides a first step towards efficient boundary elements for dynamic contact. For both stationary and dynamic contact, the relevance of adaptive methods to approximate the non-smooth solutions is well-known \cite{gwinsteph,cont1, ency}. Recent advances in the a posteriori error analysis and resulting adaptive mesh refinement procedures for time domain boundary elements \cite{apost, gimperleinreview} will therefore be of interest for the dynamic contact considered here, with a particular view towards tire dynamics \cite{banz2}.}\\

\noindent \emph{The article is organized as follows:} 
Section \ref{faframework} recalls the boundary integral operators associated to the wave equation as well as their mapping properties between suitable space-time anisotropic Sobolev spaces. Section \ref{formulation} reduces the contact problem to a variational inequality on the contact boundary \textcolor{black}{and discusses the existence and uniqueness of solutions both for the contact problem and a related, simpler Dirichlet-to-Neumann equation in a half space}. Section \ref{apriori} describes the discretization and proves a priori error estimates for the contact problem as well as for the Dirichlet-to-Neumann equation. {The a priori error analysis for the mixed formulation is then presented in Section \ref{mixedsection}.} \textcolor{black}{A simpler contact problem, which involves the single layer operator $V$ only, is analyzed in Section \ref{Vsection}.} Section \ref{algo} derives a time stepping scheme for the Dirichlet-to-Neumann equation, which forms the basis for both a time stepping and a space-time Uzawa algorithm for the nonlinear contact problem. Section \ref{experiments} presents numerical experiments {based on the mixed formulation}.

\noindent \textcolor{black}{\emph{Notation:} We write $f \lesssim g$ provided there exists a constant $C$ such that $f \leq C g$. If the constant $C$ is allowed to depend on a parameter $\sigma$, we write $f \lesssim_\sigma g$.}

\section{Formulation of punch and contact problems}
\label{physics}

\textcolor{black}{The mathematical formulation of the physical dynamical contact and punch problems involves the time-dependent Lam\'{e} equation for the displacement $u$ of a linearly elastic body in terms of the stress $\sigma(u)$:
$$\textstyle{\frac{\partial^2 u}{\partial t^2}} - \mathrm{div} \ \sigma(u)=0$$
The contact problem is described by non-penetration boundary conditions at the contact boundary $G$: With $\nu$  the unit normal to $G$,  the normal components $u_n$ and $\sigma_n$ of the displacement $u$, respectively stress $\sigma(u) \nu$, satisfy 
\begin{align*}
\begin{cases}
u_n|_{\mathbb{R}\times G} \geq 0\ ,\sigma_n|_{\mathbb{R}\times G}\geq h\ ,\\ u_n|_{\mathbb{R}\times G}>0\ \Longrightarrow\ \sigma_n|_{\mathbb{R}\times G}=h\ .
\end{cases}
\end{align*}
}
\textcolor{black}{The mathematical analysis of the time-dependent contact problem has proven difficult \cite{eckbook}, and there are few rigorous works on its numerical analysis. Even the existence of weak solutions is proven only for viscoelastic materials or modified contact conditions, such as in \cite{cocou}. 
A second boundary condition with unilateral constraints is the punch (or stamp) problem \cite{stamp, eskin, lure}, which considers a punch indenting a linearly elastic material, where the domain of contact between the punch and the material is not known. Compared to the contact problem the relevant boundary conditions exchange the roles of $\sigma_n$ and $u_n$:
\begin{align*}
\begin{cases}
\sigma_n|_{\mathbb{R}\times G} \geq 0\ ,u_n|_{\mathbb{R}\times G}\geq h\ ,\\ \sigma_n|_{\mathbb{R}\times G}>0\ \Longrightarrow\ u_n|_{\mathbb{R}\times G}=h\ .
\end{cases}
\end{align*}
The analysis presents similar difficulties as the contact problem, and the existence of solutions to the general problem is open.}

\textcolor{black}{As a step towards the numerical analysis of a hyperbolic equation with unilateral constraints \textcolor{black}{and without dissipative terms}, such as the contact and punch problems,  we investigate two simplified model problems. They replace the Lam\'{e} equation by the scalar wave equation in the physical limit when transversal stresses can be neglected \cite{cooper, eskin, lebeau, lure}. We still refer to the simplified problems as contact and punch problems, respectively. The methods here developed for the wave equation are expected to be useful in the study of the general vector-valued dynamic contact and punch problems in elasticity.}

\textcolor{black}{We describe the contact problem considered in this paper between the elastic half-spaces $\mathbb{R}^n_-$ and $\mathbb{R}^n_+$ with coordinates $x = (x',x_n) = (x_1, \dots, x_{n-1}, x_n)$. 
The scalar displacement $w$ satisfies the wave equation  in both half-spaces:
$$\textstyle{\frac{\partial^2 w}{\partial t^2}}- \Delta w = 0\ ,\hspace{1cm}  \text{for}\ (t,x)\in\mathbb{R}\times \mathbb{R}^n_{\pm}\ .$$
Contact takes place in the subdomain $G$ of the boundary $\partial \mathbb{R}^{n}_- = \partial \mathbb{R}^{n}_+= \mathbb{R}^{n-1} \times \{0\}$, which may be thought of as a crack between the bodies. If $w^+$ denotes the displacement of the body at the upper face of $G$ and $w^-$ the displacement of the body at the lower face of $G$, non-penetrability is described by the condition $w^+ - w^- \geq 0$. At points in $G$ without contact, where $w^+ - w^- > 0$, the normal stresses at both the upper and lower face vanish: $\sigma_{x_n}^{\pm} = 0$. If we assume that the opening crack is symmetric with respect to $G$, $w^+ = - w^-$ and $\sigma_{x_n}^{+} = -\sigma_{x_n}^{-}$, we obtain the contact boundary conditions \textcolor{black}{\eqref{contactprob}} for the solution to the wave equation $w = w^+$ with $\sigma_{x_n}^{+} = -\mu \frac{\partial w}{\partial \nu}\textcolor{black}{= -\mu \frac{\partial w}{\partial x_n}}$. Outside the contact region, the two half-spaces are rigidly attached, leading to the full  \textcolor{black}{initial-boundary value problem  \eqref{1.1} for the wave equation}.
}

\textcolor{black}{We also formulate the punch problem for the elastic half-space $\mathbb{R}^n_+$. 
Denote the surface of the punch by $x_n = \phi(t, x')\leq 0$ and assume $\phi(t, 0) = 0$, $\phi \to -\infty$ as $|x_1, \dots, x_{n-1}| \to \infty$. Let $G'$ be the unknown domain of contact and $\eta$ the displacement of the punch in $x_n$-direction. We denote the normal displacement of the plane $\{x_n = 0\}$ by $w$ and its normal stress by $\sigma_{x_n}$. At $x_n=0$ we then have
\begin{equation*}w = \phi + \eta, \quad \sigma_{x_n}\geq 0 \quad \text{ in } \mathbb{R}\times G'\end{equation*}
and 
\begin{equation*}w \geq \phi + \eta, \quad \sigma_{x_n}= 0 \quad \text{ in } \mathbb{R}\times \mathbb{R}^{n-1}\setminus \overline{G'} \ .\end{equation*}
As in the case of the contact problem, the punch conditions are complemented by Dirichlet boundary conditions outside $G'$. 
With $\sigma_{x_n} = -\mu \frac{\partial w}{\partial \nu}  \textcolor{black}{= -\mu \frac{\partial w}{\partial x_n}} $ and $h =  \textcolor{black}{2}(\phi + \eta)$, this can be summarized as the boundary condition \eqref{stampprob1}.
}

\section{Boundary integral operators and Sobolev spaces}
\label{faframework}

We introduce the single layer {potential} in time domain  as
$$ S \varphi(t,x)=\textcolor{black}{2}\int_{\mathbb{R}^+ \times \Gamma} \textcolor{black}{\gamma}(t- \tau,x,y)\ \varphi(\tau,y)\ d\tau\ ds_y\ ,$$
where {$(t,x)\in \mathbb{R}^+ \times \Omega$ and} $\textcolor{black}{\gamma}$ is a fundamental solution to the wave equation. Specifically in 3 dimensions, $ \textcolor{black}{\gamma}(t- \tau,x,y) = \frac{\delta(t-\tau - |x-y|) \delta(y)}{ \textcolor{black}{4 \pi} |x-y|}$, for the Dirac distribution $\delta$, and the single layer potential is given by
\begin{align*}
S \varphi(t,x) &=\frac{1}{ \textcolor{black}{2} \pi} \int_\Gamma \frac{\varphi(t-|x-y|,y)}{|x-y|}\ ds_y \ .
\end{align*}
{We similarly define the double-layer potential as
$$D\varphi(t,x)=  \textcolor{black}{2}\int_{\mathbb{R}^+\times \Gamma} \frac{\partial  \textcolor{black}{\gamma}}{\partial n_y}(t- \tau,x,y)\ \varphi(\tau,y)\ d\tau\ ds_y \ .$$}

For the Dirichlet-to-Neumann operator, we require the {single--layer operator} $V$, its normal derivative $K'$, the double--layer operator $K$ and hypersingular operator $W$ for $x \in \Gamma$, $t>0$:  
\begin{align}
V \varphi(t,x)&={   \textcolor{black}{2}\int_{\mathbb{R}^+\times \Gamma}  \textcolor{black}{\gamma}(t- \tau,x,y)\ \varphi(\tau,y)\ d\tau\ ds_y\, , }\label{Voperator}\\
K\varphi(t,x)&= \textcolor{black}{2}\int_{\mathbb{R}^+\times \Gamma} \frac{\partial  \textcolor{black}{\gamma}}{\partial n_y}(t- \tau,x,y)\ \varphi(\tau,y)\ d\tau\ ds_y\ , \label{Koperator}\\
K' \varphi(t,x)&=  \textcolor{black}{2}\int_{\mathbb{R}^+\times \Gamma} \frac{\partial  \textcolor{black}{\gamma}}{\partial n_x}(t- \tau,x,y)\ \varphi(\tau,y)\ d\tau\ ds_y\, , \label{Kpoperator}\\
W \varphi(t,x)&= \textcolor{black}{2}\int_{\mathbb{R}^+\times \Gamma} \frac{\partial^2  \textcolor{black}{\gamma}}{\partial n_x \partial n_y}(t- \tau,x,y)\ \varphi(\tau,y)\ d\tau\ ds_y \ . \label{Woperator}
\end{align}

\begin{remark}\label{vanish}
When $\Omega = \mathbb{R}^n_+$, the normal derivative of $\textcolor{black}{\gamma}$ vanishes on $\Gamma= \partial \mathbb{R}^n_+$. Therefore, $K\varphi=K'\varphi=0$ in this case.
\end{remark}

{The boundary integral operators} are considered between space-time aniso\-tropic Sobolev spaces $H_\sigma^s(\mathbb{R}^+,\widetilde{H}^{r}(\Gamma))$, see  \cite{HGEPSN} or \cite{haduong}. {To define them, if $\partial\Gamma\neq \emptyset$, first extend $\Gamma$ to a closed, orientable Lipschitz manifold $\widetilde{\Gamma}$. }

{On $\Gamma$ one defines the usual Sobolev spaces of supported distributions:
$$\widetilde{H}^r(\Gamma) = \{u\in H^r(\widetilde{\Gamma}): \mathrm{supp}\ u \subset {\overline{\Gamma}}\}\ , \quad\ r \in \mathbb{R}\ .$$
Furthermore, ${H}^r(\Gamma)$ is the quotient space $ H^r(\widetilde{\Gamma}) / \widetilde{H}^r({\widetilde{\Gamma}\setminus\overline{\Gamma}})$.} \\
{To write down an explicit family of Sobolev norms, introduce a partition of unity $\alpha_i$ subordinate to a covering of $\widetilde{\Gamma}$ by open sets $B_i$. For diffeomorphisms $\varphi_i$ mapping each $B_i$ into the unit cube $\subset \mathbb{R}^n$, a family of Sobolev norms is induced from $\mathbb{R}^n$, \textcolor{black}{with parameter $\omega \in \mathbb{C}\setminus \{0\}$}:
\begin{equation*}
 ||u||_{r,\omega,{\widetilde{\Gamma}}}=\left( \sum_{i=1}^p \int_{\mathbb{R}^n} (|\omega|^2+|\xi|^2)^r|\mathcal{F}\left\{(\alpha_i u)\circ \varphi_i^{-1}\right\}(\xi)|^2 d\xi \right)^{\frac{1}{2}}\ .
\end{equation*}
The norms for different $\omega \in \mathbb{C}\setminus \{0\}$ are equivalent and $\mathcal{F}$ denotes the Fourier transform. They induce norms on $H^r(\Gamma)$, $||u||_{r,\omega,\Gamma} = \inf_{v \in \widetilde{H}^r(\widetilde{\Gamma}\setminus\overline{\Gamma})} \ ||u+v||_{r,\omega,\widetilde{\Gamma}}$ and on $\widetilde{H}^r(\Gamma)$, $||u||_{r,\omega,\Gamma, \ast } = ||e_+ u||_{r,\omega,\widetilde{\Gamma}}$. $e_+$ extends the distribution $u$ by $0$ from $\Gamma$ to $\widetilde{\Gamma}$. \textcolor{black}{As the norm $||u||_{r,\omega,\Gamma, \ast }$ corresponds to extension by zero, while $||u||_{r,\omega,\Gamma}$ allows  extension by an arbitrary $v$, $||u||_{r,\omega,\Gamma, \ast }$ is stronger than $||u||_{r,\omega,\Gamma}$. Like in the time-independent case the norms are not equivalent whenever $r \in \frac{1}{2} + \mathbb{Z}$ \cite{gwinsteph}.}

{We now define a class of space-time anisotropic Sobolev spaces:
\begin{definition}\label{sobdef}
For {$\sigma>0$ and} $s, r \in\mathbb{R}$ define
\begin{align*}
 H^s_\sigma(\mathbb{R}^+,{H}^r(\Gamma))&=\{ u \in \mathcal{D}^{'}_{+}(H^r(\Gamma)): e^{-\sigma t} u \in \mathcal{S}^{'}_{+}(H^r(\Gamma))  \textrm{ and }   ||u||_{s,r,\sigma,\Gamma} < \infty \}\ , \\
 H^s_\sigma(\mathbb{R}^+,\widetilde{H}^r({\Gamma}))&=\{ u \in \mathcal{D}^{'}_{+}(\widetilde{H}^r({\Gamma})): e^{-\sigma t} u \in \mathcal{S}^{'}_{+}(\widetilde{H}^r({\Gamma}))  \textrm{ and }   ||u||_{s,r,\sigma,\Gamma, \ast} < \infty \}\ .
\end{align*}
$\mathcal{D}^{'}_{+}(E)$ respectively~$\mathcal{S}^{'}_{+}(E)$ denote the spaces of distributions, respectively~tempered distributions, on $\mathbb{R}$ with support in $[0,\infty)$, taking values in $E = {H}^r({\Gamma}), \widetilde{H}^r({\Gamma})$. The relevant norms are given by
\begin{align*}
\|u\|_{s,r,\sigma}:=\|u\|_{s,r,\sigma,\Gamma}&=\left(\int_{-\infty+i\sigma}^{+\infty+i\sigma}|\omega|^{2s}\ \|\hat{u}(\omega)\|^2_{r,\omega,\Gamma}\ d\omega \right)^{\frac{1}{2}}\ ,\\
\|u\|_{s,r,\sigma,\ast} := \|u\|_{s,r,\sigma,\Gamma,\ast}&=\left(\int_{-\infty+i\sigma}^{+\infty+i\sigma}|\omega|^{2s}\ \|\hat{u}(\omega)\|^2_{r,\omega,\Gamma,\ast}\ d\omega \right)^{\frac{1}{2}}\,.
\end{align*}
\end{definition}
\textcolor{black}{They are Hilbert spaces, and we note that the basic case $s=r=0$ is the weighted $L^2$-space with scalar product $\langle u,v \rangle_\sigma := \int_0^\infty e^{-2\sigma t} \int_\Gamma u \overline{v} ds_x\ dt$. Because $\Gamma$ is Lipschitz, like in the case of standard Sobolev spaces  \cite{necas}} these spaces are independent of the choice of $\alpha_i$ and $\varphi_i$ \textcolor{black}{when $|r|\leq 1$}.~\textcolor{black}{ We further introduce the set of nonnegative distributions 
${H}^{r}_\sigma(\R^+, \tilde{H}^{s}(G))^+$}.\\
}
{The boundary integral operators} obey the following mapping properties between these spaces:
\begin{theorem}[\cite{HGEPSN}]\label{mappingproperties}
The following operators are continuous for $r\in \R$:
\begin{align*}
& V:  {H}^{r+1}_\sigma(\R^+, \tilde{H}^{-\frac{1}{2}}(\Gamma))\to  {H}^{r}_\sigma(\R^+, {H}^{\frac{1}{2}}(\Gamma)) \ ,
\\ & K':  {H}^{r+1}_\sigma(\R^+, \tilde{H}^{-\frac{1}{2}}(\Gamma))\to {H}^{r}_\sigma(\R^+, {H}^{-\frac{1}{2}}(\Gamma)) \ ,
\\ & K:  {H}^{r+1}_\sigma(\R^+, \tilde{H}^{\frac{1}{2}}(\Gamma))\to {H}^{r}_\sigma(\R^+, {H}^{\frac{1}{2}}(\Gamma)) \ ,
\\ & W:  {H}^{r+1}_\sigma(\R^+, \tilde{H}^{\frac{1}{2}}(\Gamma)))\to {H}^{r}_\sigma(\R^+, {H}^{-\frac{1}{2}}(\Gamma)) \ .
\end{align*}
\end{theorem}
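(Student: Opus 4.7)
The plan is to prove the mapping properties by passing to the Fourier--Laplace transform in time, reducing each claim to a frequency-domain bound for the associated Helmholtz boundary integral operator with explicit tracking of the complex wavenumber $\omega = \xi + i\sigma$. Specifically, for $u \in H^{r+1}_\sigma(\mathbb{R}^+, \widetilde{H}^{\mp 1/2}(\Gamma))$ Plancherel on the Bromwich contour $\{\mathrm{Im}\,\omega = \sigma\}$ gives
\begin{equation*}
\|u\|_{r+1,\mp 1/2,\sigma,\ast}^2 = \int_{-\infty+i\sigma}^{\infty+i\sigma} |\omega|^{2(r+1)} \|\hat{u}(\omega)\|_{\mp 1/2,\omega,\Gamma,\ast}^2\, d\omega,
\end{equation*}
and similarly without the $\ast$. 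Since each of $V, K, K', W$ is a convolution in time, its Fourier--Laplace transform acts on each frequency slice as the corresponding Helmholtz boundary integral operator $\hat{V}(\omega), \hat{K}(\omega), \hat{K}'(\omega), \hat{W}(\omega)$ with wavenumber $\omega$. The statement of the theorem will follow if I can show the four frequency-domain estimates
\begin{equation*}
\|\hat{V}(\omega)\phi\|_{1/2,\omega,\Gamma} \lesssim |\omega|\,\|\phi\|_{-1/2,\omega,\Gamma,\ast}, \quad \|\hat{W}(\omega)\phi\|_{-1/2,\omega,\Gamma} \lesssim |\omega|\,\|\phi\|_{1/2,\omega,\Gamma,\ast},
\end{equation*}
and analogous bounds for $\hat{K}, \hat{K}'$, uniformly for $\omega$ with $\mathrm{Im}\,\omega = \sigma$, and then integrate in $\omega$.

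To obtain the frequency-domain estimates, I would follow the Bamberger--Ha~Duong approach based on Costabel's variational framework. For the single-layer operator $\hat{V}(\omega)$, one represents $\hat{V}(\omega)\phi = (\mathcal{S}(\omega)\phi)|_\Gamma$ in terms of the single-layer potential $\mathcal{S}(\omega)\phi$, which satisfies the Helmholtz equation $(-\Delta - \omega^2)U = 0$ in $\mathbb{R}^n\setminus\Gamma$ together with the jump condition for its normal derivative. Applying Green's identity in the interior and exterior and using the coercivity-type identity for $-\Delta - \omega^2$ yields a bound of the form $\mathrm{Re}\,\langle \hat{V}(\omega)\phi, \overline{\omega\phi}\rangle \gtrsim \sigma |\omega|^{-1} \|U\|_{H^1_\omega}^2$, from which the desired bound on $\hat{V}$ follows after applying the $\omega$-weighted trace theorem. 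The analogous argument for $\hat{W}(\omega)$ uses the double-layer potential and its gradient trace. For the mixed operators $\hat{K}(\omega), \hat{K}'(\omega)$ the estimates can be derived either directly by the same potential-theoretic technique or, more economically, via the Calder\'on identities together with the bounds already obtained for $\hat{V}, \hat{W}$ and the interior/exterior Dirichlet-to-Neumann operators. The key point, as in \cite{haduong}, is that all constants depend on $\omega$ only through $|\omega|$ and $\sigma$, and the power of $|\omega|$ gained matches the one lost in the time regularity index $r+1 \mapsto r$.

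With the frequency bounds in hand, the theorem is a one-line integration: for instance
\begin{equation*}
\|V\varphi\|_{r,1/2,\sigma}^2 = \int_{-\infty+i\sigma}^{\infty+i\sigma} |\omega|^{2r} \|\hat{V}(\omega)\hat{\varphi}(\omega)\|_{1/2,\omega,\Gamma}^2 d\omega \lesssim \int_{-\infty+i\sigma}^{\infty+i\sigma} |\omega|^{2(r+1)} \|\hat{\varphi}(\omega)\|_{-1/2,\omega,\Gamma,\ast}^2 d\omega = \|\varphi\|_{r+1,-1/2,\sigma,\ast}^2,
\end{equation*}
and similarly for the other three operators. The holomorphy of $\omega\mapsto \hat{V}(\omega)$ in the upper half plane, together with supports in $[0,\infty)$ of the distributions in $\mathcal{D}'_+$, guarantees that the integral is independent of $\sigma > 0$ in the required sense and ensures the resulting distributions are again causal.

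The main obstacle is the frequency-explicit coercivity step: establishing the $|\omega|$-dependent bounds with the sharp power of $|\omega|$ on the right-hand side, without spurious logarithmic or polynomial losses. The delicate point is that the Helmholtz coercivity fails uniformly as $\sigma \to 0$, so one must carefully track how constants blow up in $\sigma^{-1}$ while keeping the polynomial order in $|\omega|$ optimal; this is exactly the place where the Bamberger--Ha~Duong argument uses the shifted energy identity $\mathrm{Re}\,\langle (-\Delta-\omega^2)U, \overline{\omega U}\rangle = \sigma(\|\nabla U\|^2 + |\omega|^2\|U\|^2)$, and once this is in place the remainder of the proof is routine tracking of norms.
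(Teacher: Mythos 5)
The paper itself does not prove Theorem~\ref{mappingproperties}: it is quoted verbatim from \cite{HGEPSN} (a prior paper by several of the same authors), which in turn follows the Bamberger--Ha~Duong framework \cite{BamHa,haduong}. Your reconstruction --- Fourier--Laplace transform in $t$, frequency-explicit Helmholtz estimates on each slice $\{\mathrm{Im}\,\omega=\sigma\}$, and integration along the Bromwich contour --- is precisely that technique, so you have identified the correct route, and the claimed one-power-of-$|\omega|$ gain matching the drop $r+1\mapsto r$ is the right bookkeeping.

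Two details in the frequency-domain step are stated loosely and should be tightened. First, the identity you cite,
$\mathrm{Re}\,\langle(-\Delta-\omega^2)U,\overline{\omega U}\rangle = \sigma\bigl(\|\nabla U\|^2 + |\omega|^2\|U\|^2\bigr)$,
has a missing factor of $i$: with test function $\overline{\omega U}$ the \emph{real} part produces $(\mathrm{Re}\,\omega)$-weighted terms, while the $\sigma$-coercivity lives in the imaginary part, i.e.\ it is
$-\mathrm{Im}\,\bigl[\bar\omega(\|\nabla U\|^2 - \omega^2\|U\|^2)\bigr] = \sigma\|U\|_{1,\omega}^2$,
equivalently the real part after testing with $\overline{i\omega U}$ (the Fourier image of $\partial_t u$). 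Second, the inequality
$\mathrm{Re}\,\langle\hat V(\omega)\phi,\overline{\omega\phi}\rangle \gtrsim \sigma|\omega|^{-1}\|U\|_{H^1_\omega}^2$
is the coercivity estimate used to control $V^{-1}$ (or $V\partial_t$), not the one that yields continuity. For the forward mapping property one uses the energy identity as an \emph{equality} expressing $\sigma\|U\|_{1,\omega}^2$ as a boundary pairing, bounds that pairing from above by Cauchy--Schwarz plus the $\omega$-weighted trace theorem to obtain
$\|U\|_{1,\omega} \lesssim \sigma^{-1}|\omega|\,\|\phi\|_{-1/2,\omega,\ast}$,
and only then takes the trace of $U$ to bound $\|\hat V(\omega)\phi\|_{1/2,\omega}$. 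Also note that for $W$ the relevant trace is a Neumann trace of a double-layer potential, so the plain $H^1_\omega$ bound is not enough; one needs the $\omega$-weighted $H^1(\Delta)$ lifting in the spirit of Costabel, which you gesture at with ``gradient trace'' but do not spell out. These are fixable bookkeeping points; the overall strategy agrees with the proof in the cited reference.
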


\noindent When $\Omega = \mathbb{R}^n_+$, Fourier methods yield improved estimates for $V$ and $W$, {see also Section \ref{formulation}}:
\begin{theorem}[\cite{haduongcrack}, pp. 503-506]\label{mapimproved} The following operators are continuous for $r,s \in \mathbb{R}$:
\begin{align*}
& V:{H}^{r+\frac{1}{2}}_\sigma(\R^+, \tilde{H}^{s}(\Gamma))\to {H}^{r}_\sigma(\R^+, {H}^{s+1}(\Gamma)) \ , \\
& W: {H}^{r}_\sigma(\R^+, \tilde{H}^{s}(\Gamma))\to {H}^{r}_\sigma(\R^+, {H}^{s-1}(\Gamma))\ .
\end{align*}
\end{theorem}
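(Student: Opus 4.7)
The plan is to exploit that, for $\Omega = \mathbb{R}^n_+$ with $\Gamma = \partial\mathbb{R}^n_+ = \mathbb{R}^{n-1}$, both $V$ and $W$ become Fourier multipliers in the tangential and temporal variables, reducing the Sobolev-space estimates to pointwise bounds on their symbols. I would first compute these symbols by taking a Laplace transform in $t$ (with parameter $\omega = \xi_0 + i\sigma$, $\sigma > 0$) and a Fourier transform in $x' \in \mathbb{R}^{n-1}$. The fundamental solution then satisfies the ODE $(-\partial_{x_n}^2 + \kappa^2)\hat\gamma = \delta(x_n)$ with $\kappa = \sqrt{|\xi'|^2 - \omega^2}$, the branch being chosen so that $\operatorname{Re}\kappa > 0$ (which is unambiguous because $|\xi'|^2 - \omega^2$ stays away from $(-\infty, 0]$ whenever $\sigma > 0$). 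Hence $\hat\gamma(\omega,\xi',x_n) = e^{-\kappa|x_n|}/(2\kappa)$, and inserting this into the definitions \eqref{Voperator} and \eqref{Woperator} yields the multiplier representations $\widehat{V\varphi} = \kappa^{-1}\hat\varphi$ and $\widehat{W\varphi} = \kappa\,\hat\varphi$.

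Since $\Gamma = \mathbb{R}^{n-1}$ has no boundary, the norms $\|\cdot\|_{r,\omega,\Gamma}$ and $\|\cdot\|_{r,\omega,\Gamma,\ast}$ from Definition \ref{sobdef} both coincide with $\bigl(\int_{\mathbb{R}^{n-1}}(|\omega|^2 + |\xi'|^2)^r|\hat u(\xi')|^2\,d\xi'\bigr)^{1/2}$. Applying Plancherel in $\xi_0$ and $\xi'$ to the norms of Definition \ref{sobdef}, the claimed continuity of $V$ is equivalent to the pointwise multiplier bound
\[
\frac{|\omega|^2 + |\xi'|^2}{|\kappa|^2} \lesssim_\sigma |\omega|
\]
uniformly in $\xi' \in \mathbb{R}^{n-1}$ and $\omega \in \mathbb{R} + i\sigma$, while continuity of $W$ is equivalent to $|\kappa|^2 \lesssim_\sigma |\omega|^2 + |\xi'|^2$.

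The remaining (elementary) obstacle is to verify these pointwise bounds. The key is the identity
\[
||\xi'|^2 - \omega^2|^2 = (|\xi'|^2 - |\omega|^2)^2 + 4\sigma^2|\xi'|^2,
\]
obtained by separating real and imaginary parts of $|\xi'|^2 - \omega^2$ and using $\xi_0^2 = |\omega|^2 - \sigma^2$. The $W$-bound follows at once: $|\kappa|^2 \leq ||\xi'|^2 - |\omega|^2| + 2\sigma|\xi'|$, which together with $|\omega| \geq \sigma$ gives $|\kappa|^2 \lesssim_\sigma |\omega|^2 + |\xi'|^2$. For $V$, introducing $t = |\xi'|^2/|\omega|^2$ and dividing the identity by $|\omega|^2$, the required estimate reduces to
\[
|\omega|^2(t-1)^2 + 4\sigma^2 t \gtrsim_\sigma (1+t)^2,
\]
which I would verify in the three regimes $t \leq 1/2$, $1/2 \leq t \leq 2$ and $t \geq 2$: in the outer regimes the first summand and in the middle regime the second summand dominates the right-hand side, using $|\omega| \geq \sigma$ throughout. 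Combining the multiplier representations with the two symbol bounds yields the claimed continuity statements.
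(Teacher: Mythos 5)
Your proposal is correct and follows the same Fourier-multiplier strategy as the cited reference (Ha-Duong, \emph{On the transient acoustic scattering by a flat object}): in the half-space $V$ and $W$ are multipliers with symbols $\kappa^{\mp 1}$, and the two pointwise bounds you derive, $|\kappa|^2 \lesssim |\omega|^2 + |\xi'|^2$ and $|\kappa|^2 \gtrsim_\sigma (|\omega|^2+|\xi'|^2)/|\omega|$, are precisely the symbol estimates \eqref{symbolest1} that the paper itself quotes (noting $|\kappa| = |\Gamma(\xi_0+i\sigma,\xi')|$ since $\kappa = \pm i\Gamma$). The algebraic identity and the three-regime case analysis for the $V$-bound check out, including the (necessary) $\sigma$-dependence of the implicit constant.
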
 

{Theorems \ref{mappingproperties} and \ref{mapimproved} imply the corresponding mapping properties for the composition with the restriction $p_Q$ to $Q = \mathbb{R}\times G$. For example, from Theorem \ref{mappingproperties} we obtain $p_Q V:  {H}^{r+1}_\sigma(\R^+, \tilde{H}^{-\frac{1}{2}}(\Gamma))\to  {H}^{r}_\sigma(\R^+, {H}^{\frac{1}{2}}(G))$ and $p_Q V:  {H}^{r+1}_\sigma(\R^+, \tilde{H}^{-\frac{1}{2}}(G)) \hookrightarrow {H}^{r+1}_\sigma(\R^+, \tilde{H}^{-\frac{1}{2}}(\Gamma))\to  {H}^{r}_\sigma(\R^+, {H}^{\frac{1}{2}}(G))$.}

\textcolor{black}{As noted by Bamberger and Ha Duong \cite{BamHa}, when composed with a time derivative $V$ satisfies a coercivity estimate in the norm of ${H}^{0}_\sigma(\R^+, \tilde{H}^{-\frac{1}{2}}(\Gamma))$:  $\|\phi\|^2_{0,-\frac{1}{2}, \sigma, \ast} \lesssim_\sigma \langle V \phi, \partial_t \phi\rangle$. On the other hand the mapping properties of Theorem \ref{mappingproperties} imply the continuity of the bilinear form associated to $V \partial_t$ in the bigger norm of ${H}^{1}_\sigma(\R^+, \tilde{H}^{-\frac{1}{2}}(\Gamma))$: $\langle V \phi, \partial_t \phi\rangle \lesssim \|\phi\|^2_{1,-\frac{1}{2}, \sigma, \ast}$. These estimates are a crucial ingredient in the numerical analysis of time-domain boundary integral equations. To study the equation $V \phi = f$, Bamberger and Ha Duong \cite{BamHa} consider the weak form of the differentiated equation $V \partial_t \phi = \partial_t f$ with the operator $V \partial_t$.}\\  \textcolor{black}{Similar estimates with different norms in the upper and lower bounds hold for $W\partial_t$: $\|\psi\|^2_{0,\frac{1}{2}, \sigma, \ast} \lesssim_\sigma \langle W \psi, \partial_t \psi\rangle\lesssim \|\psi\|^2_{1,\frac{1}{2}, \sigma, \ast}$. See \cite{HGEPSN, haduong} for proofs and further information.}\\ 
\textcolor{black}{The few known analytical results for their well-posedness, like  \cite{cooper, lebeau}, are restricted to geometric situations where refined coercivity estimates without time derivatives are available. Indeed, while the  equation $V \phi = f$ is equivalent to the differentiated form $V \partial_t \phi = \partial_t f$, this is not true for inequality conditions like \eqref{1.1}.}

\textcolor{black}{For flat contact area, when $\Omega = \mathbb{R}^n_+$, in this article we use the coercivity estimate $$\|\phi\|_{-\frac{1}{2},-\frac{1}{2},\sigma,\ast}^2 \lesssim_\sigma \langle p_Q V \phi,\phi\rangle_\sigma\lesssim \|\phi\|_{\frac{1}{2},-\frac{1}{2},\sigma,\ast}^2$$ from \cite{cooper, haduongcrack}, which does not involve a time derivative $\partial_t$. }

\section{\textcolor{black}{Contact problem: Boundary integral formulation and well-posedness}}
\label{formulation}

\textcolor{black}{As in Cooper \cite{cooper}, we start with a regularized contact problem with parameter $\sigma >0$. The analysis lets $\sigma \to 0^+$ at the end, to recover the existence of weak solutions to the contact problem \eqref{1.1}.} We let $w_\sigma=e^{-\sigma t}w$ and $h_\sigma=e^{-\sigma t}h$. Using appropriate units, we may also assume $c_s=1$. Multiplying (\ref{1.1}) by $e^{-\sigma t}$, we then obtain
\begin{align}\label{1.3}
\begin{cases}
\left(\frac{\partial}{\partial t}+\sigma\right)^2w_\sigma= \Delta w_\sigma\ ,\hspace{1cm}  &\text{for}\ (t,x)\in\mathbb{R}\times\Omega\ ,
\\ w_\sigma=0\ ,\hspace{1cm}  &\text{on} \ \textcolor{black}{\mathbb{R}\times} \Gamma \setminus \overline{G} \ ,
\\ w_\sigma\geq0\ ,\ -\mu \frac{\partial w_\sigma}{\partial \nu}\geq h_\sigma\ ,\hspace{1cm}  &\text{on} \ \textcolor{black}{\mathbb{R}\times}G \ ,
\\\big(-\mu \frac{\partial w_\sigma}{\partial \nu}-h_\sigma)\ w_\sigma=0, &\text{on} \ \textcolor{black}{\mathbb{R}\times}G \ ,
\\ w_\sigma=0, &\text{for}\ (t,x)\in(-\infty,0)\times\Omega\ .
\end{cases}
\end{align}
\textcolor{black}{We apply the Fourier transform in $(t,x')$ to the first equation of \eqref{1.3}, where $x' = (x_1, \dots, x_{n-1})$, and obtain the ordinary differential equation
\begin{align*}
-c_s^{-2}(\xi_0+i\sigma)^2 \hat{w}_\sigma=-|\xi'|^2 \hat{w}_\sigma+\frac{\partial^2}{\partial x_n^2} \hat{w}_\sigma .
\end{align*}
It has the solution
\begin{align*}
\hat{w}_\sigma(\xi_0+i\sigma,\xi',x_n)=C_1(\xi_0+i\sigma,\xi')e^{i\Gamma x_n}+C_2(\xi_0+i\sigma,\xi')e^{-i\sigma \Gamma x_n},
\end{align*}
where $\Gamma(\xi_0+i\sigma,\xi')=\sqrt{c_s^{-2}(\xi_0+i\sigma)^2 -|\xi'|^2}$ and the branch of the square root is chosen such that
$\sqrt {c_s^{-2}(\xi_0+i\sigma)^2 -|\xi'|^2} \approx c_s^{-1}(\xi_0+i\sigma)$ for $|\xi_0+i\sigma| \gg |\xi'|$. The condition that $\hat{w}_\sigma$ is square integrable in $x_n$ implies $C_2 =0$.
From the trace $u_\sigma(x_0,x')=w_\sigma(x_0,x', x_n=0^+)=\mathcal{F}^{-1}(C_1(\xi_0+i\sigma,\xi'))$ we see that
$\hat{w}_\sigma(\xi_0+i\sigma,\xi',x_n)=\hat{u}_\sigma(\xi_0+i\sigma,\xi')e^{i \Gamma x_n}$.
We define the Dirichlet-to-Neumann operator by} \begin{align}\label{1.4}
\gamadi_\sigma w_\sigma|_{\textcolor{black}{\mathbb{R}\times}\Gamma} := -\mu\frac{\partial w_\sigma}{\partial \nu}\Big|_{\textcolor{black}{\mathbb{R}\times}\Gamma}\ .
\end{align}
\textcolor{black}{Note that
 \begin{align*}
\frac{\partial w_\sigma}{\partial \nu}& =  \partial x_n\mathcal{F}^{-1}(\hat{u}_\sigma(\xi_0+i\sigma,\xi')e^{i \Gamma x_n})\\ &=(2\pi)^{-n}\int_{\R^n}e^{ix_0\xi_0+ix'\xi'}i \Gamma(\xi_0+i\sigma,\xi')\hat{u}_\sigma(\xi_0+i\sigma,\xi')e^{i \Gamma x_n} \dxi ,
\end{align*} 
so that $\gamadi_\sigma$ is a generalized pseudodifferential operator with symbol $-i\mu\Gamma$:
\begin{align}\label{gamadidef}
&\gamadi_\sigma u_\sigma=(2\pi)^{-n}\int_{\R^n} e^{ix_0\xi_0+ix'\xi'}(-i\mu\Gamma(\xi_0+i\sigma,\xi'))\hat{u}_{\sigma}(\xi_0,\xi') \dxi \ .
\end{align}}
\textcolor{black}{From this explicit formula, one notices the estimates (\cite{haduongcrack}, p.~499):
\begin{align}\label{symbolest1}
 \textstyle{\frac{\sigma^{\frac{1}{2}}}{|\xi_0+i\sigma|^{\frac{1}{2}}}}(|\xi_0+i\sigma|^2+|\xi'|^2)^{\frac{1}{2}}\leq |\Gamma(\xi_0+i\sigma,\xi')| &\leq (|\xi_0+i\sigma|^2+|\xi'|^2)^{\frac{1}{2}} \ ,
\\ 
Im \ \Gamma(\xi_0+i\sigma,\xi')&\geq \textstyle{ \frac{\sigma}{|\xi_0+i\sigma|}}(|\xi_0+i\sigma|^2 +|\xi'|^2)^{\frac{1}{2}}. \label{symbolest2}
\end{align}
They translate into the following coercivity and mapping properties for the Dirichlet-to-Neumann operator in the case of the half space $\Omega = \mathbb{R}^n_+$: }
\begin{theorem}
\label{mappingProperties} 
\textcolor{black}{$p_Q \gamadi_\sigma: {H}^{{s}}_\sigma(\R^+, \tilde{H}^{{\frac{1}{2}}}(G)) \to {H}^{{s}}_\sigma(\R^+, {H}^{{-\frac{1}{2}}}(G))$ continuously and} $\|\phi\|_{-\frac{1}{2},\frac{1}{2},\sigma,\ast}^2 \lesssim_\sigma \langle p_Q \gamadi_\sigma \phi,\phi\rangle\lesssim \|\phi\|_{0,\frac{1}{2},\sigma,\ast}^2$.
\end{theorem}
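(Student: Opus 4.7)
The plan is to work entirely in Fourier variables, exploiting the explicit symbol representation \eqref{gamadidef} of $\gamadi_\sigma$ together with the two-sided pointwise bounds \eqref{symbolest1}--\eqref{symbolest2}. Since $\Omega = \mathbb{R}^n_+$, the boundary $\Gamma = \mathbb{R}^{n-1}$ is flat so all three Sobolev norms $\|\cdot\|_{s,r,\sigma}$, $\|\cdot\|_{s,r,\sigma,\ast}$ reduce to weighted $L^2$-norms on the Fourier side along the contour $\operatorname{Im}\omega=\sigma$: namely, writing $\omega = \xi_0+i\sigma$,
\begin{equation*}
\|\phi\|_{s,r,\sigma,\ast}^2 \;=\; \int_{-\infty+i\sigma}^{+\infty+i\sigma}\!\!|\omega|^{2s} \int_{\mathbb{R}^{n-1}} (|\omega|^2+|\xi'|^2)^{r}\, |\widehat{e_+\phi}(\omega,\xi')|^2\, d\xi'\, d\omega.
\end{equation*}
Since $\phi$ is supported in $\overline{G}$, extension by zero satisfies $\gamadi_\sigma(e_+\phi) = \gamadi_\sigma \phi$ and $\langle p_Q \gamadi_\sigma \phi,\phi\rangle_\sigma = \langle \gamadi_\sigma(e_+\phi), e_+\phi\rangle_\sigma$. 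By Plancherel on the Laplace contour, this pairing equals (up to an absolute constant) the integral of $-i\mu\Gamma(\omega,\xi')\,|\widehat{e_+\phi}(\omega,\xi')|^2$ over $\{\operatorname{Im}\omega=\sigma\}\times\mathbb{R}^{n-1}$.

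\textbf{Continuity.} For $s\in\mathbb{R}$, I use the upper bound $|\Gamma(\omega,\xi')|\leq(|\omega|^2+|\xi'|^2)^{1/2}$ from \eqref{symbolest1}. Since the symbol of $p_Q\gamadi_\sigma$ is $-i\mu\Gamma$,
\begin{equation*}
\|p_Q\gamadi_\sigma \phi\|_{s,-\frac{1}{2},\sigma}^2 \;\leq\; \|\gamadi_\sigma\phi\|_{s,-\frac{1}{2},\sigma,\Gamma}^2 \;\leq\; \mu^2\!\int|\omega|^{2s}\!\int (|\omega|^2+|\xi'|^2)^{-\frac{1}{2}}|\Gamma|^2\,|\widehat{e_+\phi}|^2\,d\xi'\,d\omega
\end{equation*}
which is bounded by $\mu^2\|\phi\|_{s,\frac{1}{2},\sigma,\ast}^2$. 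The first inequality uses that the quotient norm $\|\cdot\|_{-1/2,\omega,G}$ is bounded by the full $\|\cdot\|_{-1/2,\omega,\Gamma}$.

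\textbf{Coercivity (lower bound).} Taking the real part in the Fourier representation of the pairing and applying the imaginary-part bound \eqref{symbolest2} gives
\begin{equation*}
\operatorname{Re}\langle p_Q\gamadi_\sigma\phi,\phi\rangle_\sigma \;=\; \mu\!\int\!\!\int \operatorname{Im}\Gamma(\omega,\xi')\,|\widehat{e_+\phi}|^2\,d\xi'\,d\omega \;\geq\; \mu\sigma\!\int\!\!\int |\omega|^{-1}(|\omega|^2+|\xi'|^2)^{\frac{1}{2}}|\widehat{e_+\phi}|^2,
\end{equation*}
and the right-hand side is exactly $\mu\sigma\,\|\phi\|_{-\frac{1}{2},\frac{1}{2},\sigma,\ast}^2$.

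\textbf{Upper bound on the form.} Again by \eqref{symbolest1},
\begin{equation*}
|\langle p_Q \gamadi_\sigma \phi,\phi\rangle_\sigma| \;\leq\; \mu\!\int\!\!\int (|\omega|^2+|\xi'|^2)^{\frac{1}{2}}|\widehat{e_+\phi}|^2\,d\xi'\,d\omega \;=\; \mu\,\|\phi\|_{0,\frac{1}{2},\sigma,\ast}^2.
\end{equation*}

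The only delicate point I anticipate is tracking the constants in $\sigma$ through the Plancherel identity on the shifted contour and keeping the distinction between the quotient norm on $H^{-1/2}(G)$ and the norm on $H^{-1/2}(\Gamma)$; both are handled by noting that $e_+:\tilde H^{1/2}(G)\hookrightarrow \tilde H^{1/2}(\Gamma)$ is an isometry in the $\|\cdot\|_{\ast}$-scale and that restriction is dual to extension by zero, so no loss is incurred in passing between $\Gamma$ and $G$ in either estimate.
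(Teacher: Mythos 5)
Your proof is correct and follows essentially the same route as the paper: passing to the Fourier/Laplace side via Plancherel, invoking the explicit symbol $-i\mu\Gamma(\xi_0+i\sigma,\xi')$ from \eqref{gamadidef}, and applying the two-sided symbol bounds \eqref{symbolest1}--\eqref{symbolest2} for the upper bounds (continuity and boundedness of the form) and the lower bound (coercivity) respectively. If anything your write-up is more explicit than the paper about the passage between the $G$- and $\Gamma$-norms and about the role of $e_+$, which the paper glosses over.
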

\begin{proof}\textcolor{black}{By the Plancherel theorem we observe
\begin{align*}\|p_Q \gamadi_\sigma \phi\|_{s,-\frac{1}{2},\sigma,\ast}^2 &= (2\pi)^{-2n}\langle -i\mu  |\xi_0+i\sigma|^{2s} (|\xi_0+i\sigma|^2+|\xi'|^2)^{-\frac{1}{2}}\Gamma(\xi_0+i\sigma,\xi') \hat{\phi}, -i\mu \Gamma(\xi_0+i\sigma,\xi')  \hat{\phi}\rangle \\&=  (2\pi)^{-2n}\langle |\mu|^2  |\xi_0+i\sigma|^{2s} (|\xi_0+i\sigma|^2+|\xi'|^2)^{-\frac{1}{2}}|\Gamma(\xi_0+i\sigma,\xi')|^2 \hat{\phi}, \hat{\phi}\rangle\ .\end{align*}
From the upper bound in \eqref{symbolest1}, one concludes 
$$\|p_Q \gamadi_\sigma \phi\|_{s,-\frac{1}{2},\sigma,\ast}^2\leq (2\pi)^{-2n}\langle |\mu|^2  |\xi_0+i\sigma|^{2s} (|\xi_0+i\sigma|^2+|\xi'|^2)^{\frac{1}{2}} \hat{\phi}, \hat{\phi}\rangle \lesssim \|\phi\|_{s,\frac{1}{2},\sigma,\ast}^2\ .$$
Similarly, using
\begin{equation}\label{coerfourier}
\langle p_Q \gamadi_\sigma \phi,\phi\rangle = (2\pi)^{-n}\langle -i\mu \Gamma(\xi_0+i\sigma,\xi') \hat{\phi}, \hat{\phi}\rangle
\end{equation}
and the upper bound in \eqref{symbolest1},  
$$\langle p_Q \gamadi_\sigma \phi,\phi\rangle \lesssim \langle (|\xi_0+i\sigma|^2+|\xi'|^2)^{\frac{1}{2}}\hat{\phi}, \hat{\phi} \rangle =  \|\phi\|_{0,\frac{1}{2},\sigma,\ast}^2\ .$$ Finally, the lower bound follows from \eqref{coerfourier} and the lower bound for $\Gamma$ in \eqref{symbolest2}:
\begin{align*} \mathrm{Re}\ \langle p_Q \gamadi_\sigma \phi,\phi\rangle & = (2\pi)^{-n} \langle \mu (\mathrm{Im}\ \Gamma(\xi_0+i\sigma,\xi')) \hat{\phi}, \hat{\phi}\rangle \\
& \gtrsim_\sigma \langle |\xi_0+i\sigma|^{-1}(|\xi_0+i\sigma|^2 +|\xi'|^2)^{\frac{1}{2}} \hat{\phi}, \hat{\phi}\rangle \simeq \|\phi\|_{-\frac{1}{2},\frac{1}{2},\sigma,\ast}^2
\end{align*}
} 
\end{proof}
Substituting the definition \eqref{1.4} of the Dirichlet-to-Neumann operator $\gamadi_\sigma$ into
the boundary conditions of the contact problem \eqref{1.3}, the problem \eqref{1.3} reduces to an equivalent inequality in terms of the trace $u_\sigma := w_\sigma|_\Gamma$ on the boundary: \\
Find $u_\sigma$ with $\text{supp }u_\sigma \subset Q_0 = \overline{\mathbb{R}^+}\times G$ such that
\begin{align}\label{1.5}
u_\sigma \geq 0\ , \ \gamadi_\sigma u_\sigma \geq h_\sigma \ , \ (\gamadi_\sigma u_\sigma-h_\sigma)\ u_\sigma=0 \ \text{on}\ Q = \mathbb{R} \times G\ .
\end{align}

More precisely, in terms of the restriction $p_Q$ to $Q$, we obtain the following weak formulation as a variational inequality for suitably smooth $h_\sigma$: \\
Find $u_\sigma\in {H}^{\textcolor{black}{\frac{1}{2}}}_\sigma(\R^+, \tilde{H}^{\textcolor{black}{\frac{1}{2}}}(G))$ such that:
\begin{align}\label{varI}
u_\sigma \geq 0\  \text{ and } \forall v \in  {H}^{\textcolor{black}{\frac{1}{2}}}_\sigma(\R^+, \tilde{H}^{\textcolor{black}{\frac{1}{2}}}(G)) \text{ with }v\geq 0:\ \langle p_Q\gamadi_\sigma u_\sigma,v-u_\sigma\rangle_\sigma \geq \langle h_\sigma, v-u_\sigma \rangle_{\textcolor{black}{\sigma}}\ .
\end{align}

\begin{theorem}\label{VIequiv}
The contact problem \eqref{1.3} is equivalent to the variational inequality \eqref{varI}.  
\end{theorem}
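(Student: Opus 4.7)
The plan is to establish the equivalence in both directions. For the forward direction from \eqref{1.3} to \eqref{varI}, I first set $u_\sigma := w_\sigma|_{\mathbb{R}\times\Gamma}$ and note that $u_\sigma$ is supported in $Q_0 = \overline{\R^+}\times \overline{G}$ thanks to the homogeneous Dirichlet condition on $\Gamma\setminus \overline{G}$ and the vanishing Cauchy data for $t<0$. Since $w_\sigma$ is the solution of the damped wave equation in $\Omega$ with Dirichlet trace $u_\sigma$ (selected by square-integrability in $x_n$ in the half-space derivation of $\gamadi_\sigma$), the definition \eqref{1.4} gives $\gamadi_\sigma u_\sigma = -\mu\partial_\nu w_\sigma|_{\mathbb{R}\times\Gamma}$. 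The pointwise contact conditions of \eqref{1.3} then translate to $u_\sigma \geq 0$, $p_Q \gamadi_\sigma u_\sigma \geq h_\sigma$, and the complementarity relation $\langle p_Q \gamadi_\sigma u_\sigma - h_\sigma,\, u_\sigma\rangle_\sigma = 0$. For any admissible nonnegative test function $v\in H^{\frac{1}{2}}_\sigma(\R^+,\tilde{H}^{\frac{1}{2}}(G))$, splitting
\begin{equation*}
\langle p_Q\gamadi_\sigma u_\sigma, v-u_\sigma\rangle_\sigma = \langle p_Q\gamadi_\sigma u_\sigma, v\rangle_\sigma - \langle p_Q\gamadi_\sigma u_\sigma, u_\sigma\rangle_\sigma \geq \langle h_\sigma, v\rangle_\sigma - \langle h_\sigma, u_\sigma\rangle_\sigma
\end{equation*}
immediately yields \eqref{varI}.

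For the converse, I would start from a solution $u_\sigma$ of \eqref{varI} and define $w_\sigma$ as the unique solution in $\Omega$ of the damped wave equation with vanishing Cauchy data for $t<0$ and Dirichlet trace $u_\sigma$ on $\mathbb{R}\times\Gamma$; the support condition on $u_\sigma$ guarantees the boundary condition on $\Gamma\setminus\overline{G}$. By the very definition of $\gamadi_\sigma$, $-\mu \partial_\nu w_\sigma|_{\mathbb{R}\times \Gamma} = \gamadi_\sigma u_\sigma$. To recover the pointwise inequality $\gamadi_\sigma u_\sigma \geq h_\sigma$ on $\mathbb{R}\times G$, test \eqref{varI} with $v = u_\sigma + \varphi$ where $\varphi \geq 0$ ranges over $H^{\frac{1}{2}}_\sigma(\R^+,\tilde{H}^{\frac{1}{2}}(G))$; this produces $\langle p_Q \gamadi_\sigma u_\sigma - h_\sigma, \varphi\rangle_\sigma \geq 0$ for every such $\varphi$, which is exactly $p_Q\gamadi_\sigma u_\sigma \geq h_\sigma$ in the dual sense. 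Testing successively with the nonnegative functions $v = 2u_\sigma$ and $v = 0$ gives the two opposite inequalities
\begin{equation*}
\langle p_Q\gamadi_\sigma u_\sigma, u_\sigma\rangle_\sigma \geq \langle h_\sigma, u_\sigma\rangle_\sigma \quad\text{and}\quad -\langle p_Q\gamadi_\sigma u_\sigma, u_\sigma\rangle_\sigma \geq -\langle h_\sigma, u_\sigma\rangle_\sigma,
\end{equation*}
forcing the equality $\langle p_Q\gamadi_\sigma u_\sigma - h_\sigma, u_\sigma\rangle_\sigma = 0$. Together with the sign conditions $u_\sigma \geq 0$ and $p_Q \gamadi_\sigma u_\sigma - h_\sigma \geq 0$, this yields the pointwise complementarity condition almost everywhere on $\mathbb{R}\times G$, completing the recovery of \eqref{1.3}.

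The main obstacle I expect is not the algebraic manipulation of the inequalities, which is standard for obstacle problems, but rather the functional-analytic justification that the various traces and dual pairings make sense and that the pointwise contact relations can indeed be reconstructed from distributional ones. This is where the half-space mapping property in Theorem \ref{mappingProperties}, $p_Q\gamadi_\sigma: H^{\frac{1}{2}}_\sigma(\R^+,\tilde{H}^{\frac{1}{2}}(G)) \to H^{\frac{1}{2}}_\sigma(\R^+, H^{-\frac{1}{2}}(G))$, is essential: it ensures that both sides of \eqref{varI} are well-defined and that varying $\varphi$ over a dense nonnegative cone yields the desired sign of the distribution $p_Q\gamadi_\sigma u_\sigma - h_\sigma$. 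Throughout, the convexity of the cone $\{v \geq 0\}$ — in particular closure under addition and scaling by $2$ — is what allows the variational inequality to encode precisely the unilateral complementarity system.
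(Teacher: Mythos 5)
Your proof is correct and follows essentially the same route as the paper: reducing \eqref{1.3} to the pointwise system \eqref{1.5} via the definition of $\gamadi_\sigma$, deriving \eqref{varI} by linearity of the pairing, and recovering \eqref{1.5} by testing successively with $v = u_\sigma + \varphi$, $v = 2u_\sigma$, and $v = 0$. Your additional remarks on the role of Theorem \ref{mappingProperties} in justifying the dual pairings are a minor elaboration of points the paper leaves implicit, but do not change the argument.
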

\begin{proof}
\textcolor{black}{By definition of the Dirichlet-to-Neumann operator $\gamadi_\sigma$ in \eqref{1.4}, the contact problem \eqref{1.3} is formally equivalent to \eqref{1.5}. \\
To deduce \eqref{varI} from \eqref{1.5}, multiply the second inequality in \eqref{1.5} by $v \geq 0$ to see that $$\langle p_Q \gamadi_\sigma u_\sigma - h_\sigma, v\rangle_\sigma\geq 0, $$ while from the third equality in \eqref{1.5} we note $\langle p_Q \gamadi_\sigma u_\sigma - h_\sigma, u_\sigma\rangle_\sigma = 0$. The variational inequality  \eqref{varI} follows.\\
Conversely, only the second and third (in-)equalities in \eqref{1.5} need to be shown. For the former, set $v=u_\sigma + v'$ with $v'\geq 0$ in \eqref{varI} to deduce $\langle p_Q\gamadi_\sigma u_\sigma - h_\sigma,v'\rangle_{\textcolor{black}{\sigma}}\geq 0$ for all $v'\geq 0$. Therefore we indeed obtain the inequality $\gamadi_\sigma u_\sigma - h_\sigma \geq 0$ for the integrand. To see the remaining equality in \eqref{1.5}, set $v=2 u_\sigma\geq 0$ in \eqref{varI}, so that $\langle p_Q\gamadi_\sigma u_\sigma,u_\sigma\rangle_{\textcolor{black}{\sigma}} \geq \langle h_\sigma, u_\sigma \rangle_{\textcolor{black}{\sigma}}$. On the other hand, $v=0$ leads to $\langle p_Q\gamadi_\sigma u_\sigma,u_\sigma\rangle_{\textcolor{black}{\sigma}} \leq \langle h_\sigma, u_\sigma \rangle_{\textcolor{black}{\sigma}}$. Therefore $\langle p_Q\gamadi_\sigma u_\sigma - h_\sigma,u_\sigma\rangle_{\textcolor{black}{\sigma}} = 0$. By the already established first and second inequalities in \eqref{1.5}, $p_Q\gamadi_\sigma u_\sigma - h_\sigma\geq 0$ and $u_\sigma\geq 0$, so that the integrand $(\gamadi_\sigma u_\sigma - h_\sigma) u_\sigma = 0$.}
\end{proof}
\textcolor{black}{We now restrict ourselves to the  half space $\Omega = \mathbb{R}^n_+$, \textcolor{black}{where the sharp continuity and coercivity estimates from Theorem \ref{mappingProperties} allow to show the well-posedness of the contact problem.} \textcolor{black}{We emphasize that the analysis applies in any dimension $n$.} }
\begin{theorem}[\cite{cooper}, p.~450]
\label{theoregularity}
Let $h\in  H^{\frac{3}{2}}_\sigma(\mathbb{R}^+,{H}^{-\frac{1}{2}}(G))$. Then there exists a unique  solution $u_{{\sigma}} \in H^{\frac{1}{2}}_\sigma(\mathbb{R}^+,\tilde{H}^{\frac{1}{2}}(G))^+$ of \eqref{varI}. 
\end{theorem}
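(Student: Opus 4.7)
My plan is to treat this as a constrained variational problem where the standard Lions-Stampacchia framework does not directly apply, because the bilinear form $a(u,v) = \langle p_Q \gamadi_\sigma u, v\rangle_\sigma$ is coercive only in the weaker norm $\|\cdot\|_{-\frac{1}{2},\frac{1}{2},\sigma,\ast}$ while continuity holds in the stronger norm $\|\cdot\|_{0,\frac{1}{2},\sigma,\ast}$, both controlled by Theorem \ref{mappingProperties}. The strategy is therefore a Galerkin approximation with passage to the limit, plus the regularity assumption $h \in H^{\frac{3}{2}}_\sigma(\mathbb{R}^+,H^{-\frac{1}{2}}(G))$ serving to compensate for the norm gap and to guarantee that the limit lies in $H^{\frac{1}{2}}_\sigma(\mathbb{R}^+,\tilde{H}^{\frac{1}{2}}(G))$.

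First I would introduce an increasing family of finite-dimensional subspaces $V_N \subset H^{\frac{1}{2}}_\sigma(\mathbb{R}^+,\tilde{H}^{\frac{1}{2}}(G))$ with $\bigcup_N V_N$ dense, and form the nonempty closed convex cones $K_N = V_N \cap H^{\frac{1}{2}}_\sigma(\mathbb{R}^+,\tilde{H}^{\frac{1}{2}}(G))^+$. On each $V_N$ the pseudodifferential symbol $-i\mu\Gamma(\xi_0+i\sigma,\xi')$ of $\gamadi_\sigma$ yields, via the lower bound on $\mathrm{Im}\,\Gamma$ from \eqref{symbolest2}, a strictly coercive bilinear form in the finite-dimensional sense. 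A finite-dimensional Lions-Stampacchia (or equivalently Brouwer fixed point) argument then produces a unique $u_N \in K_N$ solving
\begin{equation*}
\langle p_Q \gamadi_\sigma u_N, v_N - u_N\rangle_\sigma \geq \langle h_\sigma, v_N - u_N\rangle_\sigma \quad \forall v_N \in K_N.
\end{equation*}

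Next I would extract a priori bounds. Testing against $v_N = 0 \in K_N$ gives $\langle p_Q \gamadi_\sigma u_N, u_N\rangle_\sigma \leq \langle h_\sigma, u_N\rangle_\sigma$, and the coercivity estimate from Theorem \ref{mappingProperties}, combined with the duality pairing between $H^{\frac{1}{2}}_\sigma(\mathbb{R}^+,H^{-\frac{1}{2}}(G))$ and $H^{-\frac{1}{2}}_\sigma(\mathbb{R}^+,\tilde{H}^{\frac{1}{2}}(G))$, yields
\begin{equation*}
\|u_N\|_{-\frac{1}{2},\frac{1}{2},\sigma,\ast}^2 \lesssim_\sigma \|h_\sigma\|_{\frac{1}{2},-\frac{1}{2},\sigma}\,\|u_N\|_{-\frac{1}{2},\frac{1}{2},\sigma,\ast},
\end{equation*}
so $\|u_N\|_{-\frac{1}{2},\frac{1}{2},\sigma,\ast}$ is uniformly bounded. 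To recover a bound in the energy norm $\|\cdot\|_{\frac{1}{2},\frac{1}{2},\sigma,\ast}$ required by the statement, I would exploit the extra regularity $h \in H^{\frac{3}{2}}_\sigma$: testing the inequality against a discrete approximation of $\partial_t u_N$ (or equivalently differentiating in time, using that the symbol commutes with $\partial_t$) transfers one time derivative from $u_N$ onto $h_\sigma$, producing the stronger bound $\|u_N\|_{\frac{1}{2},\frac{1}{2},\sigma,\ast} \lesssim_\sigma \|h_\sigma\|_{\frac{3}{2},-\frac{1}{2},\sigma}$.

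Finally I would pass to the limit by extracting a weakly convergent subsequence $u_N \rightharpoonup u_\sigma$ in $H^{\frac{1}{2}}_\sigma(\mathbb{R}^+,\tilde{H}^{\frac{1}{2}}(G))$. The cone of nonnegative distributions is convex and strongly closed, hence weakly closed, so $u_\sigma \geq 0$. For each fixed $v \in H^{\frac{1}{2}}_\sigma(\mathbb{R}^+,\tilde{H}^{\frac{1}{2}}(G))^+$, density allows one to choose $v_N \in K_N$ with $v_N \to v$ strongly; combining this with weak lower semicontinuity of the quadratic form $\langle p_Q \gamadi_\sigma \cdot, \cdot\rangle_\sigma$ (guaranteed by its Hermitian real part and coercivity) passes the inequality to the limit. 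Uniqueness follows directly by subtracting two solutions $u_\sigma, \tilde{u}_\sigma$, testing each with the other, and applying the coercivity estimate to $\langle p_Q \gamadi_\sigma (u_\sigma - \tilde{u}_\sigma), u_\sigma - \tilde{u}_\sigma\rangle_\sigma \leq 0$. The main obstacle is precisely the norm mismatch between coercivity and continuity, which is why the proof cannot be a one-line application of Stampacchia's theorem and why the hypothesis $h \in H^{\frac{3}{2}}_\sigma$ (rather than $H^{\frac{1}{2}}_\sigma$) is needed to close the argument.
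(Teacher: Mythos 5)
The paper does not prove Theorem~\ref{theoregularity}; it is quoted directly from Cooper \cite{cooper}, p.~450, so there is no in-paper argument to compare your outline against. Assessed on its own terms, your proposal contains a genuine gap at precisely the step you correctly identify as the crux: the upgrade from the coercivity bound in $\|\cdot\|_{-\frac{1}{2},\frac{1}{2},\sigma,\ast}$ to the claimed energy bound $\|u_N\|_{\frac{1}{2},\frac{1}{2},\sigma,\ast}\lesssim_\sigma\|h_\sigma\|_{\frac{3}{2},-\frac{1}{2},\sigma}$. Your mechanism---testing the Galerkin inequality against a discrete approximation of $\partial_t u_N$, ``or equivalently differentiating in time''---is not admissible for a variational inequality. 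The test functions must lie in the cone $K_N$, and $\partial_t u_N$ is neither nonnegative nor supported in $[0,\infty)$; moreover $u_N+\epsilon\,\partial_t u_N$ does not stay in $K_N$ for any $\epsilon>0$, so no one-sided variation in the direction $\partial_t u_N$ is available. The paper itself flags exactly this obstruction in Section~\ref{faframework}: \emph{``while the equation $V\phi=f$ is equivalent to the differentiated form $V\partial_t\phi=\partial_t f$, this is not true for inequality conditions like \eqref{1.1}.''}

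The admissible replacement for a time derivative in a one-sided constraint problem is a one-sided time translation $u_N(\cdot-\tau)$, $\tau>0$, which preserves both the sign constraint and the support condition. But the tensor-product finite-dimensional spaces you construct are not invariant under such translations, so the discrete cone $K_N$ is not stable under them either, and the estimate does not close at the Galerkin level; and passing to the limit first, with only the $H^{-1/2}_\sigma(\tilde H^{1/2})$ bound in hand, leaves you unable to make sense of $\langle \gamadi_\sigma u_\sigma,\partial_t u_\sigma\rangle_\sigma$ in the limit. The standard way out---and, to the best of my knowledge, the route taken by Cooper---is to replace the inequality by a penalized \emph{equation}, for which differentiating in time is legitimate and yields the $H^{1/2}_\sigma(\tilde H^{1/2})$ estimate uniformly in the penalty parameter, and then to recover the inequality and the cone constraint in the limit using monotonicity of the penalty term. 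Your Galerkin framework, a priori bound with $v_N=0$, weak lower semicontinuity of $\mathrm{Re}\,\langle \gamadi_\sigma\cdot,\cdot\rangle_\sigma$, and the uniqueness argument are all sound; it is specifically the regularity upgrade that needs to be rebuilt around penalization (or a carefully formulated one-sided translation argument), not around time differentiation.
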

In terms of the original problem \eqref{1.3} we obtain:
\begin{theorem}[\cite{cooper}, p.~451]
\label{theoclassical}
Let $h\in  H^{\frac{3}{2}}_\sigma(\mathbb{R}^+,{H}^{-\frac{1}{2}}(G))$. \textcolor{black}{Then there exists} a unique $w(\cdot,x_n)\in C(\overline{\R}^+_{x_n}; H^{\frac{1}{2}}_\sigma(\mathbb{R}^+,H^{\frac{1}{2}}(\R^{n-1})) \cap H^{0}_\sigma(\mathbb{R}^+,{H}^{1}(\mathbb{R}^n))$ satisfying \eqref{1.3}.
\end{theorem}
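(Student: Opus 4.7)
The plan is to reconstruct $w_\sigma$ inside the half-space from the boundary datum $u_\sigma$ furnished by Theorem \ref{theoregularity}, verify the claimed space-time regularity with help of the symbol estimates \eqref{symbolest1}--\eqref{symbolest2}, and then unweight by $e^{\sigma t}$. Concretely, Theorem \ref{theoregularity} provides a unique $u_\sigma\in H^{\frac{1}{2}}_\sigma(\mathbb{R}^+,\tilde{H}^{\frac{1}{2}}(G))^+$ solving \eqref{varI}. Since $u_\sigma\in \tilde{H}^{\frac{1}{2}}(G)$ in space, its extension by zero to $\mathbb{R}^{n-1}$ belongs to $H^{\frac{1}{2}}(\mathbb{R}^{n-1})$, and we may form its Fourier--Laplace transform $\hat{u}_\sigma(\xi_0+i\sigma,\xi')$. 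Following the derivation that led to \eqref{gamadidef}, I would \emph{define}
\begin{equation*}
\hat{w}_\sigma(\xi_0+i\sigma,\xi',x_n) := \hat{u}_\sigma(\xi_0+i\sigma,\xi')\,e^{i\Gamma(\xi_0+i\sigma,\xi')\,x_n}, \quad x_n\geq 0,
\end{equation*}
and take $w_\sigma$ to be the inverse Fourier--Laplace transform in $(\xi_0+i\sigma,\xi')$.

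Next I would verify the regularity claims. The key tool is the factor $|e^{i\Gamma x_n}|^2 = e^{-2\,\mathrm{Im}\,\Gamma\cdot x_n}$ combined with the lower bound \eqref{symbolest2}, $\mathrm{Im}\,\Gamma \gtrsim \sigma\,|\xi_0+i\sigma|^{-1}(|\xi_0+i\sigma|^2+|\xi'|^2)^{\frac{1}{2}}$. Since $|e^{i\Gamma x_n}|\leq 1$ for $x_n\geq 0$, Plancherel together with dominated convergence in the weighted measure $|\xi_0+i\sigma|^{2\cdot\frac{1}{2}}(|\xi_0+i\sigma|^2+|\xi'|^2)^{\frac{1}{2}}\,|\hat u_\sigma|^2\,\dxi$ gives continuity of $x_n\mapsto w_\sigma(\cdot,x_n)$ with values in $H^{\frac{1}{2}}_\sigma(\mathbb{R}^+,H^{\frac{1}{2}}(\mathbb{R}^{n-1}))$, uniformly for $x_n\in[0,\infty)$. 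For the second space, integrating $\int_0^\infty e^{-2\mathrm{Im}\,\Gamma\,x_n}\,dx_n = (2\mathrm{Im}\,\Gamma)^{-1}$ and using \eqref{symbolest2}, together with the extra factor $(|\xi_0+i\sigma|^2+|\xi'|^2 + |\Gamma|^2)\lesssim (|\xi_0+i\sigma|^2+|\xi'|^2)$ produced by one $H^1_x$-derivative (here I invoke the upper bound in \eqref{symbolest1}), yields
\begin{equation*}
\|w_\sigma\|_{H^0_\sigma(\mathbb{R}^+,H^1(\mathbb{R}^n_+))}^2 \;\lesssim_\sigma\; \|u_\sigma\|_{\frac{1}{2},\frac{1}{2},\sigma,\ast}^2\,.
\end{equation*}

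With $w_\sigma$ in hand, I would check the equations of \eqref{1.3}. The ODE $(\partial_{x_n}^2-|\xi'|^2+(\xi_0+i\sigma)^2)\hat w_\sigma=0$ is built into the ansatz, so $(\partial_t+\sigma)^2 w_\sigma=\Delta w_\sigma$ on $\mathbb{R}\times\mathbb{R}^n_+$. By construction $w_\sigma|_{x_n=0}=u_\sigma$, which is supported in $\overline{\mathbb{R}^+}\times \overline{G}$, yielding the Dirichlet condition on $\Gamma\setminus\overline G$ and the causality condition at $t<0$. The contact conditions on $\mathbb{R}\times G$ follow because $u_\sigma$ solves the variational inequality \eqref{varI}, which by Theorem \ref{VIequiv} is equivalent to the pointwise contact conditions involving $\gamadi_\sigma u_\sigma = -\mu\,\partial_\nu w_\sigma|_\Gamma$. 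Uniqueness is inherited from uniqueness in Theorem \ref{theoregularity}: any two solutions would produce two traces satisfying \eqref{varI}, forcing them to coincide, and then the Fourier reconstruction shows equality inside the half-space. Finally, setting $w = e^{\sigma t} w_\sigma$ and $h = e^{\sigma t} h_\sigma$ undoes the regularization.

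The main obstacle I expect is the simultaneous verification of the two regularity statements: the $C(\overline{\mathbb{R}}^+_{x_n};\cdot)$ statement requires pointwise-in-$x_n$ control that just barely works because $|e^{i\Gamma x_n}|\leq 1$ on the contour $\xi_0+i\sigma$, whereas the $H^0_\sigma(\mathbb{R}^+,H^1(\mathbb{R}^n))$ bound needs integrability in $x_n$, which only holds thanks to the $\sigma$-dependent lower bound on $\mathrm{Im}\,\Gamma$ in \eqref{symbolest2}. Balancing these two ingredients, in particular handling the region $|\xi'|\gg|\xi_0+i\sigma|$ where $\mathrm{Im}\,\Gamma$ is small relative to $|\Gamma|$, is the delicate point—this is essentially why the argument is limited to $\sigma>0$ and why Cooper's passage to $\sigma\to 0^+$ has to be done at the level of the variational inequality rather than the reconstructed solution.
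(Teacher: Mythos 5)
The paper does not give a proof of Theorem~\ref{theoclassical}: it is cited from Cooper~\cite{cooper} without reproduction. Your reconstruction is, however, faithful to the Fourier--Laplace derivation that the paper itself uses in Section~\ref{formulation} to derive $\gamadi_\sigma$ (the ansatz $\hat w_\sigma = \hat u_\sigma e^{i\Gamma x_n}$, the square-integrability-in-$x_n$ argument to kill the $C_2$ mode, and the symbol estimates \eqref{symbolest1}--\eqref{symbolest2}). In that sense it is essentially the same route Cooper takes, and the two regularity bounds are correctly organized: $|e^{i\Gamma x_n}|=e^{-x_n\,\mathrm{Im}\,\Gamma}\le 1$ plus dominated convergence gives the $C(\overline{\mathbb{R}}^+_{x_n};H^{1/2}_\sigma H^{1/2})$ membership uniformly in $x_n$, while the $x_n$-integral $(2\,\mathrm{Im}\,\Gamma)^{-1}$ combined with $1+|\xi'|^2+|\Gamma|^2$ (the weight carried by an $H^1$ derivative in the half-space, where you only transform tangentially) and the two-sided symbol bounds lands exactly on the $\|\cdot\|_{\frac12,\frac12,\sigma,\ast}$ weight, which is finite by Theorem~\ref{theoregularity}.

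Three points you should tighten. First, causality of $w_\sigma$ in $t$ is claimed without argument; it follows from a Paley--Wiener theorem, using that both $\hat u_\sigma$ and the multiplier $e^{i\Gamma x_n}$ extend analytically (and remain uniformly bounded, since $\mathrm{Im}\,\Gamma>0$) as $\sigma$ increases, but this should be said, since causality is one of the conditions of \eqref{1.3}. Second, the weight you wrote for the $H^1$-in-space estimate, $(|\xi_0+i\sigma|^2+|\xi'|^2+|\Gamma|^2)$, should be $(1+|\xi'|^2+|\Gamma|^2)$: the $H^0_\sigma$-in-time part contributes no factor of $|\xi_0+i\sigma|$. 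The bound you obtain is still correct (indeed stronger) because $1\lesssim_\sigma|\xi_0+i\sigma|^2$ when $\sigma>0$, but the stated weight is not the one dictated by the norm, and the reader could be misled into thinking you need a $|\xi_0+i\sigma|^2$ factor. Third, the final ``setting $w=e^{\sigma t}w_\sigma$'' step is at best a notational adjustment and at worst contradicts the theorem as stated: \eqref{1.3} is already the regularized problem and is formulated in terms of $w_\sigma$, and the $\sigma$-weighted Sobolev spaces in the theorem's conclusion are precisely those for which multiplication by $e^{-\sigma t}$ yields unweighted regularity. Either interpret the theorem's $w$ as the solution of \eqref{1.3} directly (so no unweighting is needed) or be explicit that you solve \eqref{1.3} and then rename.

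A small notational remark: the theorem as printed has $H^1(\mathbb{R}^n)$ (and unbalanced parentheses), whereas your reconstruction naturally produces $H^1(\mathbb{R}^n_+)$. Given that the global displacement jumps across $G$ in the crack model described in Section~\ref{physics}, $\mathbb{R}^n_+$ is the mathematically consistent choice, and this is almost certainly a typo in the paper; your version is the defensible one.
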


We also note the (simpler) existence of solutions to the corresponding equality from \cite{sako}, p.~48:
\begin{theorem}\label{sakosol}
Let $h\in  H^{\frac{3}{2}}_\sigma(\mathbb{R}^+,{H}^{-\frac{1}{2}}(G))$. Then there exists a unique $u_\sigma\in {H}^{\frac{1}{2}}_\sigma(\R^+, \tilde{H}^{\frac{1}{2}}(G))$ which solves:	
\begin{align*}
\langle p_Q \gamadi_\sigma u_\sigma,v \rangle_{\textcolor{black}{\sigma}} =\langle h,v \rangle_{\textcolor{black}{\sigma}} \qquad \forall v \in {H}^{-\frac{1}{2}}_\sigma(\R^+, \tilde{H}^{\frac{1}{2}}(G)).
\end{align*} 
\end{theorem}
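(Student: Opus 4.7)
The plan is to apply the Fourier--Laplace transform in time along $\mathrm{Im}\,\omega=\sigma$ and reduce the equation to a frequency-parameterised elliptic problem on $G$, which is then solved pointwise in $\omega$ by Lax--Milgram. By the Plancherel identity built into Definition \ref{sobdef}, the variational equality of the theorem is equivalent to: for almost every $\omega=\xi_0+i\sigma \in \mathbb{R}+i\sigma$, find $\hat{u}_\sigma(\omega)\in \tilde{H}^{\frac{1}{2}}(G)$ such that
\begin{equation*}
a_\omega(\hat{u}_\sigma(\omega),\varphi) = \langle \hat{h}(\omega),\varphi\rangle_G \qquad \forall \varphi \in \tilde{H}^{\frac{1}{2}}(G)\ ,
\end{equation*}
where, by the symbol computed in \eqref{gamadidef}, $a_\omega(u,v)$ is the Fourier-side pairing of $(-i\mu\,\Gamma(\omega,\xi'))\,\widehat{e_+ u}(\xi')$ with $\widehat{e_+ v}(\xi')$ over $\mathbb{R}^{n-1}$, and $e_+$ denotes extension by zero.

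Continuity and coercivity of $a_\omega$ on $\tilde{H}^{\frac{1}{2}}(G)$ follow directly from the pointwise symbol estimates \eqref{symbolest1} and \eqref{symbolest2}. The upper bound $|\Gamma(\omega,\xi')|\leq (|\omega|^2+|\xi'|^2)^{1/2}$ yields $|a_\omega(u,v)| \lesssim \|u\|_{\frac{1}{2},\omega,G,\ast}\|v\|_{\frac{1}{2},\omega,G,\ast}$, while $\mathrm{Im}\,\Gamma(\omega,\xi') \geq \sigma|\omega|^{-1}(|\omega|^2+|\xi'|^2)^{1/2}$ gives $\mathrm{Re}\,a_\omega(u,u) \gtrsim_\sigma |\omega|^{-1}\|u\|_{\frac{1}{2},\omega,G,\ast}^2$. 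Lax--Milgram at each frequency then produces a unique $\hat{u}_\sigma(\omega)$ with
\begin{equation*}
\|\hat{u}_\sigma(\omega)\|_{\frac{1}{2},\omega,G,\ast} \lesssim_\sigma |\omega|\,\|\hat{h}(\omega)\|_{-\frac{1}{2},\omega,G}\ .
\end{equation*}
Squaring, multiplying by $|\omega|$ and integrating along $\mathrm{Im}\,\omega=\sigma$ would recover $\|u_\sigma\|_{\frac{1}{2},\frac{1}{2},\sigma,\ast}^2 \lesssim_\sigma \|h\|_{\frac{3}{2},-\frac{1}{2},\sigma}^2$, which is finite by hypothesis; uniqueness is immediate from the pointwise coercivity.

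The main obstacle is verifying that the family $\omega\mapsto \hat{u}_\sigma(\omega)$ is actually the Fourier--Laplace transform of a causal distribution, so that $u_\sigma$ in fact belongs to $H^{\frac{1}{2}}_\sigma(\mathbb{R}^+,\tilde{H}^{\frac{1}{2}}(G))$ in the sense of Definition \ref{sobdef}. Since $\Gamma(\omega,\xi')$ extends holomorphically in $\omega$ throughout the upper half-plane $\{\mathrm{Im}\,\omega > 0\}$ with the same estimates \eqref{symbolest1}--\eqref{symbolest2}, and since $\hat{h}$ admits a holomorphic, polynomially bounded extension there by causality of $h$, the pointwise Lax--Milgram solution $\hat{u}_\sigma(\omega)$ inherits a holomorphic polynomially bounded extension to $\{\mathrm{Im}\,\omega>0\}$. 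A Paley--Wiener argument in the spirit of Bamberger--Ha Duong \cite{BamHa, haduong}, already invoked for Theorem \ref{mappingproperties}, then yields $\mathrm{supp}\,u_\sigma\subset [0,\infty)\times G$ and completes the proof.
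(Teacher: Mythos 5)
The paper does not give its own proof of this statement: it is stated as a citation to Sako's thesis \cite{sako}, p.~48, with no argument reproduced. So there is no in-text proof to compare against. On its own terms, your proposal is essentially correct and follows the standard route in this literature (Bamberger--Ha Duong, Ha Duong, Cooper): conjugate the time variable away by the Fourier--Laplace transform along $\mathrm{Im}\,\omega=\sigma$, solve the resulting frequency-parameterised boundary problem on $G$ by Lax--Milgram using the explicit symbol $-i\mu\Gamma(\omega,\xi')$ and the estimates \eqref{symbolest1}--\eqref{symbolest2}, recover the space-time norm estimate $\|u_\sigma\|_{\frac{1}{2},\frac{1}{2},\sigma,\ast}\lesssim_\sigma \|h\|_{\frac{3}{2},-\frac{1}{2},\sigma}$ by Plancherel, and invoke a Paley--Wiener argument for causality. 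The Lax--Milgram bookkeeping is consistent: coercivity constant $\alpha_\omega\gtrsim_\sigma|\omega|^{-1}$ gives $\|\hat u_\sigma(\omega)\|_{\frac12,\omega,\ast}\lesssim_\sigma|\omega|\|\hat h(\omega)\|_{-\frac12,\omega}$, and the weight $|\omega|$ in the $s=\tfrac12$ norm then produces exactly the $|\omega|^{3}$ factor matching the hypothesis $h\in H^{3/2}_\sigma(\mathbb{R}^+,H^{-1/2}(G))$. The use of $\tilde H^{1/2}(G)$ as the trial space with extension by zero $e_+$ is also the right setup for the half-space.

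Two small points of precision worth flagging, though neither is a genuine gap. First, be careful to say which estimates you carry into the upper half-plane: \eqref{symbolest1}--\eqref{symbolest2} hold with $\sigma$ replaced by $\mathrm{Im}\,\omega$, so the Lax--Milgram constant degenerates like $(\mathrm{Im}\,\omega)^{-1}$ as $\mathrm{Im}\,\omega\to 0^+$. For the Paley--Wiener argument you only need polynomial bounds on the closed half-plane $\mathrm{Im}\,\omega\geq\sigma$ for the fixed $\sigma>0$ of the theorem, which you have; stating it as a bound ``on $\{\mathrm{Im}\,\omega>0\}$'' overreaches slightly. Second, the reduction to a.e.-in-$\omega$ equations should be justified by testing with separable $v(t,x)=\rho(t)\varphi(x)$, $\rho$ dense, which works because the test space $H^{-1/2}_\sigma(\mathbb{R}^+,\tilde H^{1/2}(G))$ factors in the obvious way under the Fourier--Laplace transform; you gesture at this via Plancherel but it deserves a sentence. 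With those clarifications, the argument is complete and is almost certainly the argument Sako gives, given the half-space symbol calculus already developed in Section \ref{formulation}.
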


\section{Discretization and a priori error estimates}
\label{apriori}

{\textcolor{black}{For the discretization, we restrict to $n=3$ and assume that $\Gamma$ is approximated by a piecewise polygonal surface. The approximation is again denoted by $\Gamma$. We consider a triangulation $\mathcal{T}_{S}:= \{S_1,\ldots,S_{N}\}$ of $\Gamma$ into $N$ closed triangular faces $S_i$.} The triangulation is assumed to be quasi-uniform and compatible with the area of contact $G$: \textcolor{black}{For all $i=1, \dots, N$,} if $S_i \cap G \neq \emptyset$, then $\text{int }S_i \subset G$.

{Associated to the triangulation \textcolor{black}{$\mathcal{T}_{S}$}, we obtain the space $V_h^q(\Gamma)$ of piecewise polynomial functions of degree $q$. Due to the compatibility of the meshes we have $V_h^q(G)\subset V_h^q(\Gamma)$. Moreover we define $\tilde{V}_h^q(G)$ as the subspace of those functions in $V_h^q(G)$, which vanish on $\partial G$ for $q\geq 1$. }

{For the time discretization we consider a uniform decomposition of the time interval $[0,\infty)$ into subintervals $[t_{n-1}, t_n)$ with time step $\Delta t$, such that $t_n=n\Delta t \; (n=0,1,\dots)$. Associated to this mesh, the space $V^p_{t}$ consists of piecewise  polynomial  functions of degree of $p$ (continuous and vanishing at $t=0$ if $p\geq 1$).}\\

{Let $\mathcal{T}_T=\{[0,t_1),[t_1,t_2),\cdots,$ $[t_{N-1},T)\}$ \textcolor{black}{be} the time mesh for a finite subinterval $[0,T)$. In space-time we consider the \textcolor{black}{algebraic} tensor product of the approximation spaces, $V_h^q$ and $V^p_{t}$, associated to the space-time mesh $\mathcal{T}_{S,T}=\mathcal{T}_S \times\mathcal{T}_T$, and we write
\begin{align}\label{fespace}
V_{t,h}^{p,q}:=  V_t^p \otimes V_{h}^q\ .
\end{align}
We analogously define 
\begin{align}\label{fespace2}
\tilde{V}_{t,h}^{p,q}:=  V_t^p \otimes \tilde{V}_{h}^q
\end{align}
We further define the subspace $K_{t,h}^+\subset V_{t,h}^{p,q}$ as the subspace of nonnegative piecewise polynomials.}

\textcolor{black}{The discretization space $\tilde{V}_{t,h}^{p,q}$ is contained in ${H}^{\frac{1}{2}}_\sigma(\R^+, \tilde{H}^{\frac{1}{2}}(G))$ for $p,q\geq 1$, and we denote the embedding by $j_{t,h}$. $\tilde{V}_{t,h}^{p,q}$ is contained in ${H}^{\frac{1}{2}}_\sigma(\R^+, \tilde{H}^{-\frac{1}{2}}(G))$ for $p \geq 1$, and  we denote the embedding by $k_{t,h}$. The discretized Dirichlet-to-Neumann operator may then be expressed in terms of $j_{t,h}$, $k_{t,h}$ and their adjoints $j_{t,h}^*$, $k_{t,h}^*$ as $$\gamadi_{h, \Delta t} = \frac{1}{2} (j_{t,h}^*Wj_{t,h}-j_{t,h}^*(1-K') k_{t,h}(k_{t,h}^*Vk_{t,h})^{-1}k_{t,h}^* (1-K) j_{t,h})\ .$$}

For $u_{\Delta t,h} \in V_{t,h}^{p,q} $ we thus may write 
\begin{align*}
u_{\Delta t,h}(t,x)=\sum \limits_{i=0}^{N_t} \sum \limits_{j=0}^{N_s} c_j^i \beta_{\Delta t}^i(t)\xi_h^j(x)\ .
\end{align*} 
{in terms of the basis functions $\beta_{\Delta t}^i$ in time and $\xi_h^j$ in space.\\ }

{We use the following notation for piecewise linear or constant functions:}
\begin{itemize}
\item
 $\gamma_{\Delta t}^n(t)$ \text{for the basis of piecewise constant functions in time,}
\item $\beta_{\Delta t}^n(t)$ \text{for the basis of piecewise linear functions in time,}
\item $\psi_{h}^i(x)$ \text{for the basis of piecewise  constant functions in space,}
\item $\xi_{h}^i(x)$ \text{for the basis of piecewise  linear functions in space.}
\end{itemize}

We recall the formulation as a continuous variational inequality:\\
Find $u\in H^{\frac{1}{2}}_\sigma(\mathbb{R}^+,\tilde{H}^{\frac{1}{2}}(G))^+$ such that
\begin{align}\label{contVI}
\langle p_Q \gamadi_\sigma u_\sigma, v-u_\sigma \rangle_{\textcolor{black}{\sigma}} \geq \langle h,v-u_\sigma \rangle_{\textcolor{black}{\sigma}}
\end{align}
holds for all $v\in {H}^{\frac{1}{2}}_\sigma(\R^+, \tilde{H}^{\frac{1}{2}}(G))^+$.
The discretized variational inequality reads as follows. \\
Find $u_{\Delta t,h}\in \tilde{K}_{t,h}^+$ such that
\begin{align}
\label{discreteVI}
  \langle p_Q \textcolor{black}{\gamadi_{\Delta t, h}} u_{\Delta t,h}, v_{\Delta t,h}-u_{\Delta t,h} \rangle_{\textcolor{black}{\sigma}} \geq \langle h,v_{\Delta t,h}-u_{\Delta t,h} \rangle_{\textcolor{black}{\sigma}}
\end{align}
holds for all $v_{\Delta t,h}\in \tilde{K}_{t,h}^+$.
\\ 
\textcolor{black}{In using \eqref{discreteVI}, the operator $\gamadi_{\sigma}$ is approximated by $\gamadi_{\Delta t, h}$ which inverts the equations \eqref{discdirneu1}, \eqref{discdirneu2} with the same ansatz and test functions. In our error analysis we assume that $\gamadi_\sigma$ is computed exactly, as in basic time-independent works \cite{eck}.} We refer to \cite{banjai} for the challenges of analyzing the discretization.  While $\sigma>0$ is required for the theoretical analysis\textcolor{black}{, and $\gamadi_{\sigma}$ can be computed from layer operators $V_\sigma$, $K_\sigma$, $K_\sigma'$ and $W_\sigma$ with a modified Green's function,} practical computations directly use $\sigma=0$ \textcolor{black}{\cite{BamHa,costabel, gimperleinreview, haduong}. We refer to \cite{jr} for a detailed discussion of the challenges in the analysis for $\sigma=0$.}

Using a conforming  ansatz space, we are able to derive {an a priori estimate} for the variational inequality. {It is} the hyperbolic counterpart of the elliptic {estimate} proved by Falk \cite{falk}.
\begin{theorem}
\label{apriori1}
 Let $h \in H^{\frac{3}{2}}_\sigma(\mathbb{R}^+,{H}^{-\frac{1}{2}}(G))$ and let $u \in H^{\frac{1}{2}}_\sigma(\mathbb{R}^+,\tilde{H}^{\frac{1}{2}}(G))^+$, respectively $u_{\Delta t,h}\in \tilde{K}_{t,h}^+\textcolor{black}{\subset H^{\frac{1}{2}}_\sigma(\mathbb{R}^+,\tilde{H}^{\frac{1}{2}}(G))^+ }$ be the solutions of (\ref{contVI}), respectively (\ref{discreteVI}). Then the following estimate holds:
 \begin{align}\label{aprioriestimateVI}
 \|u-u_{\Delta t,h}\|_{-\frac{1}{2},\frac{1}{2},\sigma,\star }^2 \lesssim_\sigma \inf \limits_{\phi_{\Delta t,h}\in \tilde{K}_{t,h}^+}(\textcolor{black}{\|h-p_Q \gamadi_\sigma u\|_{\frac{1}{2},-\frac{1}{2},\sigma}} \|u-\phi_{\Delta t,h}\|_{-\frac{1}{2},\frac{1}{2},\sigma,\star}+ 
 \|u-\phi_{\Delta t,h}\|_{\frac{1}{2},\frac{1}{2},\sigma,\star}^2).
\end{align}  
 \end{theorem}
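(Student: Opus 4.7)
Writing $e := u - u_{\Delta t,h}$ and $a(\phi,\psi) := \langle p_Q \gamadi_\sigma \phi, \psi\rangle_\sigma$, my starting point is the coercivity estimate from Theorem \ref{mappingProperties}:
$$\|e\|_{-\frac{1}{2},\frac{1}{2},\sigma,\ast}^{2} \;\lesssim_\sigma\; \mathrm{Re}\, a(e,e)\ .$$
The plan is then to follow Falk's template for a priori estimates of elliptic variational inequalities, adapted to the asymmetry between the coercivity and continuity norms that is characteristic of the hyperbolic Dirichlet-to-Neumann operator.

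The first step is to insert an arbitrary $\phi_{\Delta t,h} \in \tilde{K}_{t,h}^{+}$ and to split
$$a(e,e) \;=\; a(e,\, u - \phi_{\Delta t,h}) \;+\; a(e,\, \phi_{\Delta t,h} - u_{\Delta t,h})\ .$$
For the second summand I exploit conformity: since $\tilde{K}_{t,h}^{+} \subset H^{\frac{1}{2}}_\sigma(\mathbb{R}^+, \tilde{H}^{\frac{1}{2}}(G))^{+}$, the discrete solution $u_{\Delta t,h}$ is admissible as a test function in \eqref{contVI} and $\phi_{\Delta t,h}$ is admissible in \eqref{discreteVI}. Testing \eqref{contVI} with $v = u_{\Delta t,h}$ and \eqref{discreteVI} with $v_{\Delta t,h} = \phi_{\Delta t,h}$, then adding and rearranging, yields
$$a(e,\, \phi_{\Delta t,h} - u_{\Delta t,h}) \;\leq\; \langle p_Q \gamadi_\sigma u - h,\; \phi_{\Delta t,h} - u\rangle_\sigma\ ,$$
a pairing of the residual with the best-approximation error. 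The duality between $H^{\frac{1}{2}}_\sigma(\mathbb{R}^+, H^{-\frac{1}{2}}(G))$ and $H^{-\frac{1}{2}}_\sigma(\mathbb{R}^+, \tilde{H}^{\frac{1}{2}}(G))$ then bounds this expression by $\|h - p_Q \gamadi_\sigma u\|_{\frac{1}{2},-\frac{1}{2},\sigma}\, \|u - \phi_{\Delta t,h}\|_{-\frac{1}{2},\frac{1}{2},\sigma,\ast}$, which is the first term in \eqref{aprioriestimateVI}.

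For the remaining summand $a(e,\, u - \phi_{\Delta t,h})$ I use the symbolic upper bound $|\Gamma(\xi_0+i\sigma,\xi')| \leq (|\xi_0+i\sigma|^{2} + |\xi'|^{2})^{\frac{1}{2}}$ from \eqref{symbolest1} together with an asymmetric Cauchy-Schwarz on the Fourier side, distributing the weights $|\xi_0+i\sigma|^{\pm \frac{1}{2}}$ between the two factors, to derive the continuity bound
$$|a(e,\, u - \phi_{\Delta t,h})| \;\lesssim\; \|e\|_{-\frac{1}{2},\frac{1}{2},\sigma,\ast}\, \|u - \phi_{\Delta t,h}\|_{\frac{1}{2},\frac{1}{2},\sigma,\ast}\ .$$
Young's inequality then absorbs the factor $\|e\|_{-\frac{1}{2},\frac{1}{2},\sigma,\ast}$ into the left-hand side of the coercivity bound, producing the quadratic term $\|u - \phi_{\Delta t,h}\|_{\frac{1}{2},\frac{1}{2},\sigma,\ast}^{2}$. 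Taking the infimum over $\phi_{\Delta t,h} \in \tilde{K}_{t,h}^{+}$ finishes the proof.

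The main obstacle, and the essential difference from Falk's elliptic argument, is the gap between Theorem \ref{mappingProperties}'s coercivity in the weaker norm $\|\cdot\|_{-\frac{1}{2},\frac{1}{2},\sigma,\ast}$ and the continuity of the bilinear form only up to the stronger norm $\|\cdot\|_{\frac{1}{2},\frac{1}{2},\sigma,\ast}$. This half-derivative loss in the temporal exponent, intrinsic to time-domain boundary integral equations and reflected in the $\sigma$-dependent constant, is precisely what forces the two different approximation norms to appear on the right-hand side of \eqref{aprioriestimateVI}. A minor prerequisite is to check that the residual $h - p_Q \gamadi_\sigma u$ indeed belongs to $H^{\frac{1}{2}}_\sigma(\mathbb{R}^+, H^{-\frac{1}{2}}(G))$, which follows from the hypothesis on $h$ together with the mapping property $p_Q \gamadi_\sigma : H^{\frac{1}{2}}_\sigma(\mathbb{R}^+, \tilde{H}^{\frac{1}{2}}(G)) \to H^{\frac{1}{2}}_\sigma(\mathbb{R}^+, H^{-\frac{1}{2}}(G))$ from Theorem \ref{mappingProperties}.
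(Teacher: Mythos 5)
Your proof is correct and follows essentially the same path as the paper's: coercivity of $p_Q\gamadi_\sigma$ in the $\|\cdot\|_{-\frac{1}{2},\frac{1}{2},\sigma,\ast}$-norm, Falk's decomposition exploiting conformity of $\tilde{K}_{t,h}^{+}$, duality to pull out the residual $h - p_Q\gamadi_\sigma u$, continuity of the bilinear form in the asymmetric pair of norms, and Young's inequality to absorb the error factor. The paper arrives at the same key intermediate inequality \eqref{IntoEq} by writing out both variational inequalities with arbitrary $\phi,\phi_{\Delta t,h}$ and afterward substituting $\phi = u_{\Delta t,h}$, whereas your direct split $a(e,e)=a(e,u-\phi_{\Delta t,h})+a(e,\phi_{\Delta t,h}-u_{\Delta t,h})$ together with testing \eqref{contVI} by $v=u_{\Delta t,h}$ and \eqref{discreteVI} by $v_{\Delta t,h}=\phi_{\Delta t,h}$ is a tidier route to the same point; likewise your Fourier-side estimate with $|\Gamma|\le(|\xi_0+i\sigma|^2+|\xi'|^2)^{1/2}$ is just the proof of the continuity $p_Q\gamadi_\sigma:H^{-\frac12}_\sigma(\tilde H^{\frac12})\to H^{-\frac12}_\sigma(H^{\frac{-1}{2}})$ that the paper invokes from Theorem \ref{mappingProperties}.
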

 \begin{proof}
Rewriting (\ref{contVI}) and (\ref{discreteVI}), we note that
 \begin{align}\label{umgestellt}
 \langle p_Q\gamadi_\sigma u,u \rangle_{\textcolor{black}{\sigma}} \leq \langle h,u-\phi\rangle + \langle p_Q\gamadi_\sigma u, \phi \rangle_{\textcolor{black}{\sigma}}
 \end{align}
 and
 \begin{align}\label{discreteUmgestellt}
  \langle p_Q \gamadi_\sigma u_{\Delta t,h},u_{\Delta t,h} \rangle_{\textcolor{black}{\sigma}} \leq \langle h, u_{\Delta t,h}-\phi_{\Delta t,h} \rangle_{\textcolor{black}{\sigma}} + \langle p_Q \gamadi_\sigma u_{\Delta t,h}, \phi_{\Delta t,h} \rangle_{\textcolor{black}{\sigma}}.
 \end{align}
Using the coercivity in the $\|\cdot\|_{-\frac{1}{2},\frac{1}{2},\sigma,\star}$-norm, \textcolor{black}{as stated in Theorem \ref{mappingProperties}}, we obtain
\begin{align*}
 \|u-u_{\Delta t,h}\|_{-\frac{1}{2},\frac{1}{2},\sigma,\star }^2 & \lesssim_\sigma \langle p_Q\gamadi_\sigma(u-u_{\Delta t,h}),u-u_{\Delta t,h}\rangle_{\textcolor{black}{\sigma}} \\ & \leq \langle h,u-\phi\rangle_{\textcolor{black}{\sigma}}+ \langle h, u_{\Delta t,h}-\phi_{\Delta t,h} \rangle_{\textcolor{black}{\sigma}}   + \langle p_Q \gamadi_\sigma u_{\Delta t,h}, \phi_{\Delta t,h} \rangle_{\textcolor{black}{\sigma}} + \langle p_Q\gamadi_\sigma u, \phi \rangle_{\textcolor{black}{\sigma}} 
\\ & \qquad-\langle p_Q \gamadi_\sigma u,u_{\Delta t,h} \rangle_{\textcolor{black}{\sigma}} - \langle p_Q \gamadi_\sigma u_{\Delta t,h},u \rangle_{\textcolor{black}{\sigma}}
\\ &= \langle h,u-\phi\rangle +\langle h, u_{\Delta t,h}-\phi_{\Delta t,h} \rangle_{\textcolor{black}{\sigma}} + \langle p_Q \gamadi_\sigma u, \phi -u_{\Delta t,h} \rangle_{\textcolor{black}{\sigma}} \\ &\qquad+ \langle p_Q \gamadi_\sigma u_{\Delta t,h}, \phi_{\Delta t,h}-u\rangle_{\textcolor{black}{\sigma}} .
\end{align*}
We rewrite $$\langle p_Q \gamadi_\sigma u_{\Delta t,h}, \phi_{\Delta t,h}-u\rangle_{\textcolor{black}{\sigma}} = \langle p_Q \gamadi_\sigma u- u_{\Delta t,h}, u-\phi_{\Delta t,h}\rangle_{\textcolor{black}{\sigma}} - {\langle p_Q \gamadi_\sigma u, u-\phi_{\Delta t,h}\rangle_{\textcolor{black}{\sigma}}}\ ,$$ so that
\begin{align*}
\|u-u_{\Delta t,h}\|_{-\frac{1}{2},\frac{1}{2},\sigma,\star }^2 &\lesssim_\sigma 
\langle h -p_Q \gamadi_\sigma u,u-\phi_{\Delta t,h}\rangle_{\textcolor{black}{\sigma}} +\langle h -p_Q \gamadi_\sigma u_{\Delta t,h}, u_{\Delta t,h}-\phi \rangle_{\textcolor{black}{\sigma}} 
\\ &\qquad+  \langle p_Q \gamadi_\sigma u- u_{\Delta t,h}, u-\phi_{\Delta t,h}\rangle_{\textcolor{black}{\sigma}}.
\end{align*}
Because of the conforming discretization, we may choose $\phi=u_{\Delta t,h}$ and conclude
\begin{align}
\label{IntoEq}
\|u-u_{\Delta t,h}\|_{-\frac{1}{2},\frac{1}{2},\sigma,\star }^2
 & \lesssim_\sigma 
\langle h -p_Q \gamadi_\sigma u,u-\phi_{\Delta t,h}\rangle_{\textcolor{black}{\sigma}} +  \langle p_Q \gamadi_\sigma u- u_{\Delta t,h}, u-\phi_{\Delta t,h}\rangle_{\textcolor{black}{\sigma}}.
\end{align}
We estimate both terms by duality:
\begin{align*}
\|u-u_{\Delta t,h}\|_{-\frac{1}{2},\frac{1}{2},\sigma,\star}^2 &\lesssim_\sigma \|h-p_Q \gamadi_\sigma u\|_{\frac{1}{2},-\frac{1}{2},\sigma}\|u-\phi_{\Delta t,h}\|_{-\frac{1}{2},\frac{1}{2},\sigma, \star }\\ & \qquad+ \|p_Q \gamadi_\sigma(u-u_{\Delta t,h})\|_{-\frac{1}{2},-\frac{1}{2},\sigma}\|u-\phi_{\Delta t,h}\|_{\frac{1}{2},\frac{1}{2},\sigma, \star }\ .
\end{align*}
From the \textcolor{black}{continuity $p_Q \gamadi_\sigma: {H}^{-{\frac{1}{2}}}_\sigma(\R^+, \tilde{H}^{{\frac{1}{2}}}(G)) \to {H}^{-{\frac{1}{2}}}_\sigma(\R^+, {H}^{{-\frac{1}{2}}}(G))$, see Theorem \ref{mappingProperties} with $s=-\frac{1}{2}$,} one then sees that
\begin{align*}
&\|u-u_{\Delta t,h}\|_{-\frac{1}{2},\frac{1}{2},\sigma,\star }^2\lesssim_\sigma \|h-p_Q \gamadi_\sigma u\|_{\frac{1}{2},-\frac{1}{2},\sigma }\|u-\phi_{\Delta t,h}\|_{-\frac{1}{2},\frac{1}{2},\sigma, \star }\\ &\qquad \qquad\qquad\qquad\qquad\qquad +\|u-u_{\Delta t,h}\|_{-\frac{1}{2},\frac{1}{2},\sigma, \star }\|u-\phi_{\Delta t,h}\|_{\frac{1}{2},\frac{1}{2},\sigma, \star }.
\end{align*}
We conclude with the help of Young's inequality
\begin{align*}
\|u-u_{\Delta t,h}\|_{-\frac{1}{2},\frac{1}{2},\sigma,\star}^2 \lesssim_\sigma \textcolor{black}{\|h-p_Q \gamadi_\sigma u\|_{\frac{1}{2},-\frac{1}{2},\sigma}} \|u-\phi_{\Delta t,h}\|_{-\frac{1}{2},\frac{1}{2},\sigma,\star}+ \|u-\phi_{\Delta t,h}\|_{\frac{1}{2},\frac{1}{2},\sigma,\star}^2.
\end{align*}
Taking the infimum \textcolor{black}{over all $\phi_{\Delta t,h}$} yields the assertion.
\end{proof}
\textcolor{black}{In general, the estimate contains an additional consistency term $\gamadi_{\sigma} - \gamadi_{\Delta t, h}$ from the discretization error of the Dirichlet-to-Neumann operator, which would also appear in Theorem \ref{aprioriEq} for the variational equality and Theorem \ref{mixedtheorem} for the mixed formulation. It is known to be small for the time-independent problem \cite{gwinsteph} and neglected as in \cite{eck}.}

\textcolor{black}{The theorem implies explicit convergence rates \textcolor{black}{for the proposed boundary element method, using results for the best approximation of the solution $u$ in the anisotropic Sobolev space by the piecewise polynomial functions $V_{t,h}^{p,q}$}, as stated e.g.~in \cite{glaefke}.
\begin{corollary}\label{convrate}
Let $u\in H^{\frac{1}{2}+\epsilon}_\sigma(\mathbb{R}^+,\tilde{H}^{\frac{1}{2}+\epsilon}(G))$ for some $\epsilon>0$ \textcolor{black}{and $\mathcal{T}_{S,T} = \mathcal{T}_{S}\times \mathcal{T}_{T}$ a shape regular space-time mesh}. \textcolor{black}{Then} 
\begin{align*}
\|u-u_{\Delta t,h}\|_{-\frac{1}{2},\frac{1}{2},\sigma,\star}^2\lesssim_\sigma (h^{\epsilon} + (\Delta t)^{\frac{1}{2}+\epsilon})\|u\|_{\frac{1}{2}+\epsilon,\frac{1}{2}+\epsilon,\sigma} + (h^{2\epsilon}+ (\Delta t)^{2\epsilon})\|u\|_{\frac{1}{2}+\epsilon,\frac{1}{2}+\epsilon,\sigma}^2.
\end{align*}
\end{corollary}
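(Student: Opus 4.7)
The plan is to insert a concrete non-negative approximant of $u$ into the quasi-optimality estimate of Theorem \ref{apriori1} and apply standard best-approximation results on tensor-product space-time meshes as collected in \cite{glaefke}. The work splits into three steps.

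First, I would argue that the factor $\|h - p_Q \gamadi_\sigma u\|_{\frac{1}{2},-\frac{1}{2},\sigma}$ in \eqref{aprioriestimateVI} is mesh-independent and bounded in terms of $\|u\|_{\frac{1}{2}+\epsilon,\frac{1}{2}+\epsilon,\sigma}$ (and the data $h$). Theorem \ref{mappingProperties} gives continuity of $p_Q \gamadi_\sigma$ between anisotropic Sobolev spaces, and the embedding $H^{\frac{1}{2}+\epsilon}_\sigma(\mathbb{R}^+, \tilde{H}^{\frac{1}{2}+\epsilon}(G)) \hookrightarrow H^{\frac{1}{2}}_\sigma(\mathbb{R}^+, \tilde{H}^{\frac{1}{2}}(G))$ yields the desired bound. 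After this step, the first summand in \eqref{aprioriestimateVI} reduces to a constant multiple of $\|u - \phi_{\Delta t,h}\|_{-\frac{1}{2},\frac{1}{2},\sigma,\star}$ that is linear in $\|u\|_{\frac{1}{2}+\epsilon,\frac{1}{2}+\epsilon,\sigma}$.

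Second, I would choose $\phi_{\Delta t,h}$ to be a positivity-preserving quasi-interpolant of $u$ into $\tilde{K}^+_{t,h}$. Since $H^{\frac{1}{2}+\epsilon}$ does not embed into the continuous functions in three space-time dimensions, pure nodal interpolation is unavailable. Instead, I would use a Cl\'ement- or Scott--Zhang-type operator built from the non-negative hat-function basis of $\tilde{V}^{p,q}_{t,h}$, relying on local averaging so that non-negative inputs produce non-negative outputs while preserving the zero boundary conditions on $\partial G$. Such positivity-preserving operators are standard tools in the analysis of obstacle problems.

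Third, I would invoke the best-approximation estimates from \cite{glaefke}: on shape-regular tensor-product space-time meshes, for admissible indices $s_1 \le r_1 \le p+1$ and $s_2 \le r_2 \le q+1$ one has
\begin{equation*}
\|u - \phi_{\Delta t,h}\|_{s_1,s_2,\sigma,\star} \lesssim \bigl(h^{r_2-s_2} + (\Delta t)^{r_1-s_1}\bigr) \|u\|_{r_1,r_2,\sigma}.
\end{equation*}
Specializing with $r_1 = r_2 = \frac{1}{2}+\epsilon$ and $(s_1,s_2) = (\frac{1}{2},\frac{1}{2})$ gives $\|u - \phi_{\Delta t,h}\|_{\frac{1}{2},\frac{1}{2},\sigma,\star} \lesssim (h^\epsilon + (\Delta t)^\epsilon)\|u\|_{\frac{1}{2}+\epsilon,\frac{1}{2}+\epsilon,\sigma}$, whose square produces the second summand $(h^{2\epsilon} + (\Delta t)^{2\epsilon})\|u\|^2_{\frac{1}{2}+\epsilon,\frac{1}{2}+\epsilon,\sigma}$ in the corollary. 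The corresponding specialization at $(s_1,s_2) = (-\frac{1}{2},\frac{1}{2})$, combined with the bound of Step 1, yields the first summand $(h^\epsilon + (\Delta t)^{\frac{1}{2}+\epsilon})\|u\|_{\frac{1}{2}+\epsilon,\frac{1}{2}+\epsilon,\sigma}$.

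The main obstacle is the construction and analysis of a positivity-preserving quasi-interpolant that simultaneously attains these rates in both of the asymmetric anisotropic norms $\|\cdot\|_{s_1,s_2,\sigma,\star}$ appearing above. Once this interpolant is in place, the remainder of the argument is an exercise in substitution: the tensor product structure of the space-time discretization lets the spatial and temporal approximation errors be estimated independently, which produces the additive form of the rates in the corollary.
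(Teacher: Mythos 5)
Your overall strategy—insert a best-approximation into the quasi-optimality estimate of Theorem~\ref{apriori1} and apply Proposition~3.56 of \cite{glaefke}—is the same as the paper's, but there is a concrete gap in how you deduce the first summand, and a telltale arithmetic inconsistency that signals it.

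You write down the abstract estimate $\|u - \phi_{\Delta t,h}\|_{s_1,s_2,\sigma,\star} \lesssim (h^{r_2-s_2} + (\Delta t)^{r_1-s_1})\|u\|_{r_1,r_2,\sigma}$ and claim to specialize it at $(s_1,s_2) = (-\tfrac12,\tfrac12)$ to obtain the rate $h^\epsilon + (\Delta t)^{\frac12+\epsilon}$. But with $r_1 = \tfrac12+\epsilon$ and $s_1 = -\tfrac12$ this formula yields $(\Delta t)^{r_1 - s_1} = (\Delta t)^{1+\epsilon}$, not $(\Delta t)^{\frac12+\epsilon}$. The discrepancy is the symptom of the real issue: the approximation estimate you quote is not available at negative time Sobolev indices $s_1 < 0$ in the $\star$-norm (extension-by-zero framework), and Proposition 3.56 in \cite{glaefke} does not cover that regime. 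The paper's proof handles this by first invoking monotonicity of the anisotropic norms in the temporal index — $\|u - \phi_{\Delta t,h}\|_{-\frac12,\frac12,\sigma,\star} \lesssim_\sigma \|u - \phi_{\Delta t,h}\|_{0,\frac12,\sigma,\star}$, since $|\omega| \geq \sigma$ on the contour — and \emph{only then} applying the approximation estimate at $s_1 = 0$, which gives $(\Delta t)^{r_1 - 0} = (\Delta t)^{\frac12 + \epsilon}$. That single embedding step is the one missing idea; without it you are either applying the estimate out of range or proving a stronger rate than the corollary actually states.

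Your Steps~1 and 2 are fine in spirit: the factor $\|h - p_Q\gamadi_\sigma u\|_{\frac12,-\frac12,\sigma}$ is indeed absorbed into the $\lesssim_\sigma$ constant (via Theorem~\ref{mappingProperties} and the embedding into lower Sobolev indices), and the use of a positivity-preserving quasi-interpolant into $\tilde{K}^+_{t,h}$ is a legitimate and necessary device that the paper takes for granted rather than spelling out. The second summand, obtained from $(s_1,s_2) = (\tfrac12,\tfrac12)$, is correctly derived. Once the monotonicity step is inserted before the approximation estimate for the first summand, your argument coincides with the paper's.
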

\begin{proof}
We estimate the first term $\|u-\phi_{\Delta t,h}\|_{-\frac{1}{2},\frac{1}{2},\sigma,\star}$ on the right hand side in Theorem \ref{apriori1} by $\|u-\phi_{\Delta t,h}\|_{0,\frac{1}{2},\sigma,\star}$. Then we apply Proposition 3.56 in \cite{glaefke}. \textcolor{black}{While the Proposition is stated for a quasi-uniform mesh $\mathcal{T}_{S}$ and a uniform time step there, it extends to shape regular meshes as in \cite{karmel}.}
\end{proof}
\textcolor{black}{In particular, Corollary \ref{convrate} applies to locally quasi-uniform meshes which are of the product form $\mathcal{T}_{S,T} = \mathcal{T}_{S}\times \mathcal{T}_{T}$.}}
\textcolor{black}{
We also consider the continuous and discrete variational equalities for the Dirichlet-to-Neumann operator.\\ 
Find $u\in H^{\frac{1}{2}}_\sigma(\mathbb{R}^+,\tilde{H}^{\frac{1}{2}}(G))$ such that
\begin{align}\label{contVE}
\langle p_Q \gamadi_\sigma u_\sigma, v \rangle_{\textcolor{black}{\sigma}} = \langle h,v \rangle_{\textcolor{black}{\sigma}}
\end{align}
holds for all $v\in {H}^{-\frac{1}{2}}_\sigma(\R^+, \tilde{H}^{\frac{1}{2}}(G))$.\\
 Find $u_{\Delta t,h} \in \tilde{V}_{t,h}^{p,q}$ such that 
\begin{align}
\label{discreteeq}
 \langle p_Q \gamadi_{\textcolor{black}{\Delta t,h}} u_{\Delta t,h}, \phi_{\Delta t,h} \rangle_{\textcolor{black}{\sigma}} =\langle h, \phi_{\Delta t,h} \rangle_{\textcolor{black}{\sigma}}
\end{align}
holds for all $\phi_{\Delta t,h} \in \tilde{V}_{t,h}^{{p},{q}}$. \\
}

An a priori estimate for the variational equality is obtained from the previous arguments as a special case.
\begin{theorem}
\label{aprioriEq}
 Let $u\in H^{\frac{1}{2}}_\sigma(\mathbb{R}^+,\tilde{H}^{\frac{1}{2}}(G))$ and $u_{\Delta t,h} \in \tilde{V}_{t,h}^{p,q}\subset H^{\frac{1}{2}}_\sigma(\mathbb{R}^+,\tilde{H}^{\frac{1}{2}}(G))$ be the solutions of \eqref{contVE}, respectively \eqref{discreteeq}. We have the following a priori estimate:
 \begin{align*}
  \|u-u_{\Delta t,h}\|_{-\frac{1}{2},\frac{1}{2},\sigma,\star} \lesssim_\sigma \inf \limits_{\phi_{\Delta t,h} \in \widetilde{V}_{t,h}^{p,q}} \|u-\phi_{\Delta t,h} \|_{\frac{1}{2},\frac{1}{2},\sigma,\star}\ .
 \end{align*}
\end{theorem}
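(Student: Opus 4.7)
The plan is to establish a Céa-type lemma by combining the coercivity of $p_Q\gamadi_\sigma$ (from Theorem \ref{mappingProperties}), Galerkin orthogonality (which replaces the more complicated chain in the inequality case of Theorem \ref{apriori1}), and the continuity bound for $p_Q \gamadi_\sigma$ in appropriate anisotropic Sobolev norms. The key observation is that because both \eqref{contVE} and \eqref{discreteeq} are equalities, no consistency term involving $h - p_Q \gamadi_\sigma u$ appears — every ingredient from the inequality proof collapses into the single best-approximation term.

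First I would derive Galerkin orthogonality. Since $\tilde{V}_{t,h}^{p,q}\subset H^{\frac{1}{2}}_\sigma(\mathbb{R}^+,\tilde{H}^{\frac{1}{2}}(G))\subset H^{-\frac{1}{2}}_\sigma(\mathbb{R}^+,\tilde{H}^{\frac{1}{2}}(G))$, every discrete test function is admissible in \eqref{contVE}. Subtracting \eqref{discreteeq} (with the continuous operator $\gamadi_\sigma$ in place of $\gamadi_{\Delta t,h}$, as the authors assume) from \eqref{contVE} tested against $\phi_{\Delta t,h}$ yields
\begin{align*}
\langle p_Q\gamadi_\sigma(u-u_{\Delta t,h}), \phi_{\Delta t,h}\rangle_\sigma = 0 \qquad \forall \phi_{\Delta t,h}\in \tilde{V}_{t,h}^{p,q}.
\end{align*}

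Next I would apply coercivity, insert this orthogonality, and use the continuity of $p_Q\gamadi_\sigma$. Specifically, for arbitrary $\phi_{\Delta t,h}\in \tilde{V}_{t,h}^{p,q}$, Theorem \ref{mappingProperties} gives
\begin{align*}
\|u-u_{\Delta t,h}\|_{-\frac{1}{2},\frac{1}{2},\sigma,\star}^2 &\lesssim_\sigma \langle p_Q\gamadi_\sigma(u-u_{\Delta t,h}), u-u_{\Delta t,h}\rangle_\sigma \\
&= \langle p_Q\gamadi_\sigma(u-u_{\Delta t,h}), u-\phi_{\Delta t,h}\rangle_\sigma \\
&\leq \|p_Q\gamadi_\sigma(u-u_{\Delta t,h})\|_{-\frac{1}{2},-\frac{1}{2},\sigma}\ \|u-\phi_{\Delta t,h}\|_{\frac{1}{2},\frac{1}{2},\sigma,\star}.
\end{align*}
The continuity $p_Q\gamadi_\sigma : H^{-\frac{1}{2}}_\sigma(\mathbb{R}^+,\tilde{H}^{\frac{1}{2}}(G))\to H^{-\frac{1}{2}}_\sigma(\mathbb{R}^+,H^{-\frac{1}{2}}(G))$ (Theorem \ref{mappingProperties} with $s=-\tfrac{1}{2}$) bounds the first factor by $\|u-u_{\Delta t,h}\|_{-\frac{1}{2},\frac{1}{2},\sigma,\star}$. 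Dividing through and taking the infimum over $\phi_{\Delta t,h}\in \tilde{V}_{t,h}^{p,q}$ yields the claim.

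I do not expect any serious obstacle here: this is simply the linear (Céa) analogue of Theorem \ref{apriori1}, and the full inequality proof contains this argument as a sub-case. The only subtlety worth emphasizing is the matching of norms in the duality pairing — the first factor must lie in the non-tilde space $H^{-\frac{1}{2}}(G)$ (which is exactly what the mapping property of $p_Q\gamadi_\sigma$ delivers) while the second factor must be in the tilde space $\tilde{H}^{\frac{1}{2}}(G)$, so that the duality is balanced and the $\star$-norms appear correctly on both sides.
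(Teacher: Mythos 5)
Your proof is correct and takes essentially the same route as the paper: it is the standard C\'ea-type argument combining the coercivity of $p_Q\gamadi_\sigma$ from Theorem \ref{mappingProperties}, Galerkin orthogonality (obtained by testing the conforming discrete space in both \eqref{contVE} and \eqref{discreteeq}), and the continuity $p_Q\gamadi_\sigma : H^{-\frac{1}{2}}_\sigma(\mathbb{R}^+,\tilde{H}^{\frac{1}{2}}(G))\to H^{-\frac{1}{2}}_\sigma(\mathbb{R}^+,H^{-\frac{1}{2}}(G))$. The only cosmetic difference is that the paper reuses the intermediate inequality \eqref{IntoEq} derived in the proof of Theorem \ref{apriori1} and then observes that the consistency term $\langle h - p_Q\gamadi_\sigma u, u-\phi_{\Delta t,h}\rangle_\sigma$ vanishes, whereas you derive the Galerkin orthogonality $\langle p_Q\gamadi_\sigma(u-u_{\Delta t,h}),\phi_{\Delta t,h}\rangle_\sigma=0$ directly and substitute it after coercivity --- these are the same argument.
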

\begin{proof}
For the conforming test space $\tilde{V}_{t,h}^{p,q}$ we have 
\begin{align*}
\langle p_Q \gamadi_\sigma u, \phi_{\Delta t,h} \rangle_{\textcolor{black}{\sigma}}= \langle h, \phi_{\Delta t,h} \rangle_{\textcolor{black}{\sigma}}, 
\end{align*}
for all $\phi_{\Delta t,h} \in \tilde{V}_{t,h}^{p,q}$. Therefore (\ref{IntoEq}) becomes
\begin{align*}
\|u-u_{\Delta t,h}\|_{-\frac{1}{2},\frac{1}{2},\sigma,\star }^2
 & \lesssim_\sigma 
\langle h -p_Q \gamadi_\sigma u,u-\phi_{\Delta t,h}\rangle_{\textcolor{black}{\sigma}} + \langle p_Q \gamadi_\sigma (u- u_{\Delta t,h}), u-\phi_{\Delta t,h}\rangle_{\textcolor{black}{\sigma}}
\\ &= \langle p_Q \gamadi_\sigma (u- u_{\Delta t,h}), u-\phi_{\Delta t,h}\rangle_{\textcolor{black}{\sigma}}\\
& \leq \|p_Q \gamadi_\sigma(u-u_{\Delta t,h})\|_{-\frac{1}{2},-\frac{1}{2},\sigma}\|u-\phi_{\Delta t,h}\|_{\frac{1}{2},\frac{1}{2},\sigma, \star} \ .
\end{align*}
The continuity of $p_Q\gamadi_\sigma$, $\|p_Q \gamadi_\sigma(u-u_{\Delta t,h})\|_{-\frac{1}{2},-\frac{1}{2},\sigma} \lesssim \|u-u_{\Delta t,h}\|_{-\frac{1}{2},\frac{1}{2},\sigma, \ast}$, yields the assertion.
\end{proof}
\textcolor{black}{Analogous to Corollary \ref{convrate} we derive the following rate of convergence \textcolor{black}{for shape regular meshes}.
\begin{corollary}
Let $u\in H^{\frac{1}{2}+\epsilon}_\sigma(\mathbb{R}^+,\tilde{H}^{\frac{1}{2}+\epsilon}(G))$ for some $\epsilon>0$ \textcolor{black}{and $\mathcal{T}_{S,T} = \mathcal{T}_{S}\times \mathcal{T}_{T}$ a shape regular space-time mesh}. \textcolor{black}{Then}
\begin{align*}
 \|u-u_{\Delta t,h}\|_{-\frac{1}{2},\frac{1}{2},\sigma,\star}^2\lesssim_\sigma ( h^{2\epsilon} + (\Delta t)^{2\epsilon}) \|u\|_{\frac{1}{2}+\epsilon, \frac{1}{2}+ \epsilon,\sigma}^2\ .
\end{align*}
\end{corollary}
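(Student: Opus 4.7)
The plan is to derive this as a direct consequence of Theorem \ref{aprioriEq}, in close parallel to the argument for Corollary \ref{convrate}. First I would invoke Theorem \ref{aprioriEq} to reduce the problem to estimating the best-approximation error
\begin{equation*}
\inf_{\phi_{\Delta t,h}\in \tilde{V}_{t,h}^{p,q}} \|u-\phi_{\Delta t,h}\|_{\frac{1}{2},\frac{1}{2},\sigma,\star}
\end{equation*}
for $u$ in the space-time anisotropic Sobolev space $H^{\frac{1}{2}+\epsilon}_\sigma(\mathbb{R}^+,\tilde{H}^{\frac{1}{2}+\epsilon}(G))$ by a function from the piecewise polynomial tensor-product space $\tilde{V}_{t,h}^{p,q}$.

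Next I would apply the standard approximation estimate from Proposition 3.56 in \cite{glaefke}, which for a quasi-uniform spatial mesh and uniform time step yields an interpolant $\phi_{\Delta t,h}$ satisfying
\begin{equation*}
\|u-\phi_{\Delta t,h}\|_{\frac{1}{2},\frac{1}{2},\sigma,\star} \lesssim (h^{\epsilon} + (\Delta t)^{\epsilon})\, \|u\|_{\frac{1}{2}+\epsilon,\frac{1}{2}+\epsilon,\sigma}\ .
\end{equation*}
As in the proof of Corollary \ref{convrate}, the extension of this approximation bound to shape-regular product meshes $\mathcal{T}_{S,T}=\mathcal{T}_S\times \mathcal{T}_T$ follows from \cite{karmel}. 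Squaring this approximation inequality and combining it with Theorem \ref{aprioriEq} gives the claimed estimate.

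The argument is essentially mechanical; no obstacle is anticipated since both ingredients (the a priori bound of Theorem \ref{aprioriEq} in the $\|\cdot\|_{\frac{1}{2},\frac{1}{2},\sigma,\star}$ norm on the right-hand side, and the anisotropic best-approximation estimates in the same norm) match exactly. The only subtlety, as in Corollary \ref{convrate}, is to note that the best-approximation result of \cite{glaefke} applies in the stronger starred norm $\|\cdot\|_{\frac{1}{2},\frac{1}{2},\sigma,\star}$ associated with extension by zero, which is exactly the norm appearing on the right-hand side of Theorem \ref{aprioriEq}; no weakening through a dual norm is needed here, in contrast to the variational inequality case.
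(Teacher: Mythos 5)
Your proposal is correct and follows the paper's intended route: the paper simply states that this corollary is derived ``analogous to Corollary \ref{convrate}'', i.e., by inserting the approximation bound of Proposition 3.56 in \cite{glaefke} (extended to shape-regular product meshes via \cite{karmel}) into the single term on the right-hand side of Theorem \ref{aprioriEq} and squaring. Your remarks about the starred norm and the absence of the lower-order cross term are apt and explain precisely why this case is simpler than Corollary \ref{convrate}.
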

}

\section{Mixed formulation}\label{mixedsection}

{We reformulate the variational inequality as  an equivalent mixed system. The Lagrange multiplier $\lambda= \gamadi_\sigma u -h$ in this formulation provides a measure to which extent the variational inequality is not an equality; physically, \textcolor{black}{$\lambda$ is the difference between the traction $\gamadi_\sigma u = -\mu\frac{\partial w_\sigma}{\partial \nu}$ of the elastic body and the prescribed forces $h$ in the contact area. This difference corresponds to the additional forces due to contact, and  their non-vanishing identifies the contact area within the computational domain. Both the contact forces and a precise knowledge of the contact area are of interest in applications, which motivates mixed methods as they compute $\lambda$ in addition to $u$.}
\begin{theorem}[Mixed formulation]
Let $h\in H^{\frac{3}{2}}_\sigma(\mathbb{R}^+,{H}^{-\frac{1}{2}}(G))$. The variational inequality formulation (\ref{contVI}) is equivalent to the following formulation:\\ 
 Find $(u,\lambda)\in H^{\frac{1}{2}}_\sigma(\mathbb{R}^+,\tilde{H}^{\frac{1}{2}}(G))\times H^{\frac{1}{2}}_\sigma(\mathbb{R}^+,{H}^{-\frac{1}{2}}(G))^+$ such that 
\begin{align}
\label{mixedFormulationProof}
\begin{cases}
(a) ~ \langle \gamadi_\sigma u, v\rangle_{\textcolor{black}{\sigma}} - \langle \lambda,v\rangle_{\textcolor{black}{\sigma}}= \langle h,v \rangle_{\textcolor{black}{\sigma}} \\
(b)~\langle u ,\mu- \lambda \rangle_{\textcolor{black}{\sigma}} \geq 0,
\end{cases}
\end{align}
for all $(v,\mu)\in H^{\frac{1}{2}}_\sigma(\mathbb{R}^+,\tilde{H}^{\frac{1}{2}}(G))\times H^{\frac{1}{2}}_\sigma(\mathbb{R}^+,{H}^{-\frac{1}{2}}(G))^+$.
\end{theorem}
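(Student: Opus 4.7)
The plan is to prove the two implications separately, following the standard Lagrange multiplier reformulation for variational inequalities with a unilateral convex constraint. The bridge between the two formulations is the ansatz $\lambda := p_Q \gamadi_\sigma u - h$, so that $\lambda$ measures the defect in the inequality $\gamadi_\sigma u \geq h$ and the mixed problem is essentially a reading of the Karush--Kuhn--Tucker conditions associated to \eqref{1.5}. All dualities $\langle \cdot,\cdot\rangle_\sigma$ below are well defined between $H^{\frac{1}{2}}_\sigma(\mathbb{R}^+,\tilde{H}^{\frac{1}{2}}(G))$ and $H^{\frac{1}{2}}_\sigma(\mathbb{R}^+,H^{-\frac{1}{2}}(G))$, since the restriction of $\gamadi_\sigma u$ to $Q$ lies in the latter space by Theorem \ref{mappingProperties} and $h$ lies in $H^{\frac{3}{2}}_\sigma(\mathbb{R}^+,H^{-\frac{1}{2}}(G))$ by assumption.

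For the direction from \eqref{contVI} to \eqref{mixedFormulationProof}, I would take the solution $u_\sigma$ of the variational inequality, invoke Theorem \ref{VIequiv} to obtain the pointwise conditions \eqref{1.5}, and define $\lambda := p_Q \gamadi_\sigma u_\sigma - h$. The nonnegativity $\lambda \in H^{\frac{1}{2}}_\sigma(\mathbb{R}^+,H^{-\frac{1}{2}}(G))^+$ is then precisely the second inequality in \eqref{1.5}, and equation $(a)$ of \eqref{mixedFormulationProof} holds by construction. For inequality $(b)$, I would split
\begin{equation*}
\langle u_\sigma, \mu - \lambda \rangle_\sigma = \langle u_\sigma, \mu \rangle_\sigma - \langle u_\sigma, \lambda \rangle_\sigma;
\end{equation*}
the first term is nonnegative because $u_\sigma \geq 0$ and $\mu \geq 0$, while the second term vanishes by the complementarity $(\gamadi_\sigma u_\sigma - h)\, u_\sigma = 0$ from \eqref{1.5}.

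For the converse direction, starting from $(u,\lambda)$ solving \eqref{mixedFormulationProof}, I would first deduce $u \geq 0$ by choosing $\mu = \lambda + \mu'$ in $(b)$ with arbitrary $\mu' \geq 0$, which gives $\langle u,\mu'\rangle_\sigma \geq 0$ and hence $u \in H^{\frac{1}{2}}_\sigma(\mathbb{R}^+,\tilde{H}^{\frac{1}{2}}(G))^+$. Then choosing $\mu = 0$ in $(b)$ yields $\langle u,\lambda\rangle_\sigma \leq 0$; combined with $u,\lambda \geq 0$ this forces the weak complementarity $\langle u,\lambda\rangle_\sigma = 0$. Rearranging $(a)$ as $p_Q \gamadi_\sigma u - h = \lambda$ and testing against $v - u$ for arbitrary $v \in H^{\frac{1}{2}}_\sigma(\mathbb{R}^+,\tilde{H}^{\frac{1}{2}}(G))^+$ gives
\begin{equation*}
\langle p_Q \gamadi_\sigma u - h, v - u \rangle_\sigma = \langle \lambda, v \rangle_\sigma - \langle \lambda, u \rangle_\sigma = \langle \lambda, v \rangle_\sigma \geq 0,
\end{equation*}
which is exactly the variational inequality \eqref{contVI}.

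The main technical obstacle is not the algebraic manipulations, which mirror the elliptic case, but the consistent interpretation of the sign conditions $u \geq 0$, $\lambda \geq 0$ and of the complementarity identity in the space-time anisotropic Sobolev setting. In particular, the passage from the integrated inequality $\langle u,\mu'\rangle_\sigma \geq 0$ for all $\mu' \in H^{\frac{1}{2}}_\sigma(\mathbb{R}^+,H^{-\frac{1}{2}}(G))^+$ to the distributional inequality $u \geq 0$, and the fact that $\langle u,\lambda\rangle_\sigma = 0$ together with $u \geq 0$ and $\lambda \geq 0$ forces the pointwise complementarity required by \eqref{1.5}, rely on the density of smooth nonnegative functions in the respective positive cones. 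Once these density/approximation arguments are in place, the equivalence with Theorem \ref{VIequiv} closes the loop.
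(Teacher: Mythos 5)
Your proposal is correct and reaches the same conclusion by essentially the same strategy (defining $\lambda = p_Q \gamadi_\sigma u - h$ and tracking the KKT-type conditions), but the intermediate steps diverge from the paper's in a way worth noting. In the forward direction, you route through Theorem \ref{VIequiv} and the pointwise conditions \eqref{1.5}, reading off $\lambda \geq 0$ and the complementarity $\lambda u = 0$ from the strong form and then integrating. The paper instead introduces the intermediate \emph{variational-level} reformulation \eqref{contVIequiv} of the inequality, namely $\langle \gamadi_\sigma u, u\rangle_\sigma = \langle h, u\rangle_\sigma$ together with $\langle \gamadi_\sigma u, v\rangle_\sigma \geq \langle h, v\rangle_\sigma$ for all $v\geq 0$, obtained purely by plugging $v = 2u$ and $v=0$ into \eqref{contVI}. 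This keeps the whole argument at the level of integrated identities and sidesteps the distributional interpretation of the pointwise conditions that you flag as your ``main technical obstacle''. In the converse direction, your choice $\mu = \lambda + \mu'$ to deduce $u \geq 0$ is slightly slicker than the paper's argument by contradiction; but for the weak complementarity you derive only $\langle u,\lambda\rangle_\sigma \leq 0$ from $\mu = 0$ and then invoke $u,\lambda\geq 0 \Rightarrow \langle u,\lambda\rangle_\sigma \geq 0$, which again leans on interpreting nonnegativity in the anisotropic Sobolev cones. The paper avoids this entirely by also taking $\mu = 2\lambda$ in $(b)$, obtaining both $\langle u,\lambda\rangle_\sigma \geq 0$ and $\leq 0$ directly from the variational inequality; adopting that choice would close the gap you correctly identify and make your proof self-contained without any density or approximation lemma for the positive cones.
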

\begin{proof}
We first note, that (\ref{contVI}) is equivalent to the following problem.  Find $u\in H^{\frac{1}{2}}_\sigma(\mathbb{R}^+,\tilde{H}^{\frac{1}{2}}(G))^+$ solving 
\begin{align}
\label{contVIequiv}
 \begin{cases} 
(a)~ \langle \gamadi_\sigma u,u \rangle_{\textcolor{black}{\sigma}} = \langle h,u \rangle_{\textcolor{black}{\sigma}} \\
 (b)~\langle \gamadi_\sigma u, v \rangle_{\textcolor{black}{\sigma}} \geq \langle h, v \rangle_{\textcolor{black}{\sigma}},
 \end{cases}
\end{align}
for all $v\in H^{\frac{1}{2}}_\sigma(\mathbb{R}^+,\tilde{H}^{\frac{1}{2}}(G))^+$. \textcolor{black}{Setting $v=2u$, respectively $v=0$ in the variational inequality (\ref{contVI}), we obtain 
\begin{align*}
 \langle \gamadi_\sigma u,u \rangle_{\textcolor{black}{\sigma}} \geq  \langle h,u \rangle_{\textcolor{black}{\sigma}}\ , \text{ respectively }  \langle \gamadi_\sigma u,u \rangle_{\textcolor{black}{\sigma}} \leq  \langle h,u \rangle_{\textcolor{black}{\sigma}} \ ,
\end{align*}
so that $\langle \gamadi_\sigma u,u \rangle_{\textcolor{black}{\sigma}} = \langle h,u \rangle_{\textcolor{black}{\sigma}}$}. If we add this to \eqref{contVI} we obtain the second line in \eqref{contVIequiv} To get \eqref{contVI} from \eqref{contVIequiv} we subtract  (\ref{contVIequiv}a) from (\ref{contVIequiv}b). 

We now show the equivalence of \eqref{contVIequiv} and {\eqref{mixedFormulationProof}}: \\ 
 (\ref{contVIequiv}) $\Rightarrow$ {(\ref{mixedFormulationProof})}:
 If we set $\lambda= \gamadi_\sigma u -h$ we have by (\ref{contVIequiv}b) : $\langle \gamadi_\sigma u -h,v\rangle_{\textcolor{black}{\sigma}}  \geq 0$ for all $v\in H^{\frac{1}{2}}_\sigma(\mathbb{R}^+,\tilde{H}^{\frac{1}{2}}(G))^+$ and therefore $\lambda \in H^{\frac{1}{2}}_\sigma(\mathbb{R}^+,{H}^{-\frac{1}{2}}(G))^+$. The first line in {(\ref{mixedFormulationProof})} holds trivially.
 \\ By (\ref{contVIequiv}a) we have that $\langle \lambda,u \rangle_{\textcolor{black}{\sigma}}=0$. Therefore, $\langle u, \mu - \lambda  \rangle_{\textcolor{black}{\sigma}} = \langle u ,\mu \rangle_{\textcolor{black}{\sigma}} \geq 0 $, as $u$ and $\mu$ are positive. \\ {(\ref{mixedFormulationProof})} $\Rightarrow$ (\ref{contVIequiv}): 
 Now let $(u,\lambda) \in  H^{\frac{1}{2}}_\sigma(\mathbb{R}^+,\tilde{H}^{\frac{1}{2}}(G))\times H^{\frac{1}{2}}_\sigma(\mathbb{R}^+,{H}^{-\frac{1}{2}}(G))^+$ be the solution to {(\ref{mixedFormulationProof})}. Setting $\mu=2 \lambda$ and $\mu=0$ yields $\langle u,\lambda \rangle_{\textcolor{black}{\sigma}} \geq 0$, $\langle u,\lambda \rangle_{\textcolor{black}{\sigma}}\leq 0$. Therefore $\langle u, \lambda \rangle_{\textcolor{black}{\sigma}} =0$. \\ Assume that $u$ is not $\geq 0$. Then there exists $\mu \in H^{\frac{1}{2}}_\sigma(\mathbb{R}^+,{H}^{-\frac{1}{2}}(G))^+$ such that $\langle u,\mu \rangle_{\textcolor{black}{\sigma}} <0$, and we obtain the contradiction 
 \begin{align*}
 0 \leq \langle u, \mu - \lambda \rangle_{\textcolor{black}{\sigma}} = \langle u, \mu \rangle_{\textcolor{black}{\sigma}} - \langle u, \lambda \rangle_{\textcolor{black}{\sigma}} = \langle u, \mu \rangle_{\textcolor{black}{\sigma}} <0\ .
 \end{align*}
Therefore $u\in H^{\frac{1}{2}}_\sigma(\mathbb{R}^+,\tilde{H}^{\frac{1}{2}}(G))^+$.
 \\ We now insert $\tilde{v}=v-u$ for  $u,~v\in H^{\frac{1}{2}}_\sigma(\mathbb{R}^+,\tilde{H}^{\frac{1}{2}}(G))^+$ into  {(\ref{mixedFormulationProof}a)}. Note
$\langle v-u, \lambda \rangle_{\textcolor{black}{\sigma}} = \langle v, \lambda\rangle_{\textcolor{black}{\sigma}} - \langle u,\lambda \rangle_{\textcolor{black}{\sigma}}=  \langle v, \lambda\rangle_{\textcolor{black}{\sigma}} \geq 0$.   Inserting $v-u$ in {(\ref{mixedFormulationProof}a)}, we have
$$\langle \gamadi_\sigma u, v-u\rangle_{\textcolor{black}{\sigma}} - \langle \lambda,v-u\rangle_{\textcolor{black}{\sigma}}= \langle h,v-u \rangle_{\textcolor{black}{\sigma}} \ ,$$
or equivalently
$$\langle \gamadi_\sigma u-h, v-u\rangle_{\textcolor{black}{\sigma}} = \langle \lambda,v-u\rangle_{\textcolor{black}{\sigma}} \geq 0.$$
\end{proof}

The discrete formulation reads as follows:\\ 
Find $(u_{\Delta t_1,h_1}, \lambda_{\Delta t_2,h_2}) \in \tilde{V}_{ t_1,h_1}^{1,1} \times (V_{ t_2,h_2}^{0,0})^+$ such that 
\begin{align}\label{discMixedFormulation}\begin{cases} 
(a) ~ \langle \gamadi_{\textcolor{black}{\Delta t_1,h_1}} u_{\Delta t_1,h_1}, v_{\Delta t_1,h_1}\rangle_{\textcolor{black}{\sigma}} - \langle \lambda_{\Delta t_2,h_2} ,v_{\Delta t_1,h_1}\rangle_{\textcolor{black}{\sigma}}= \langle h,v_{\Delta t_1,h_1} \rangle_{\textcolor{black}{\sigma}} \\
(b)~\langle u_{\Delta t_1,h_1} ,\mu_{\Delta t_2,h_2}- \lambda_{\Delta t_2,h_2} \rangle_{\textcolor{black}{\sigma}} \geq 0
\end{cases}
\end{align}
holds for all $(v_{\Delta t_1,h_1},\mu_{\Delta t_2,h_2})\in \tilde{V}_{ t_1,h_1}^{1,1} \times (V_{ t_2,h_2}^{0,0})^+$. \\

Like for the variational inequality \eqref{discreteVI}, we assume that $\gamadi_\sigma$ is computed exactly. Note that, as in the elliptic case, we allow possibly different meshes for the displacement and the Lagrange multiplier. If the meshes for the Lagrange multiplier and the solution are sufficiently different, we obtain a discrete inf-sup condition in the space-time Sobolev spaces:
\begin{theorem}\label{discInfSup}
Let $C>0$ sufficiently small, and $\frac{\max\{h_1, \Delta t_1\}}{\min\{h_2, \Delta t_2\}}<C$. Then there exists $\alpha>0$ such that for all $\lambda_{\Delta t_2, h_2}$:
$$\sup_{\mu_{\Delta t_1, h_1}} \frac{\langle {\mu}_{\Delta t_1, h_1}, \lambda_{\Delta t_2,h_2}\rangle_{\textcolor{black}{\sigma}}}{\|{\mu}_{\Delta t_1,h_1}\|_{ 0,\frac{1}{2}, \sigma, \ast}} \geq \alpha \|\lambda_{\Delta t_2,h_2}\|_{0, -\frac{1}{2}, \sigma }\ .$$
\end{theorem}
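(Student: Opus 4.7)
The plan is to establish the discrete inf-sup inequality by the classical Fortin argument, reducing it to the continuous duality. Since $H^{0}_\sigma(\mathbb{R}^+, H^{-\frac{1}{2}}(G))$ is identified with the dual of $H^{0}_\sigma(\mathbb{R}^+, \tilde{H}^{\frac{1}{2}}(G))$ through the pairing $\langle \cdot, \cdot\rangle_\sigma$, one has
\begin{equation*}
\|\lambda_{\Delta t_2, h_2}\|_{0,-\frac{1}{2},\sigma} \;=\; \sup_{v\, \in\, H^{0}_\sigma(\mathbb{R}^+, \tilde{H}^{\frac{1}{2}}(G))} \frac{\langle v, \lambda_{\Delta t_2, h_2}\rangle_\sigma}{\|v\|_{0, \frac{1}{2}, \sigma, \ast}}\, .
\end{equation*}
Given $\varepsilon>0$, fix a continuous $v$ realizing at least $(1-\varepsilon)$ times this supremum. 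The proof then rests on constructing a Fortin-type operator $\Pi_{t_1, h_1}: H^{0}_\sigma(\mathbb{R}^+, \tilde{H}^{\frac{1}{2}}(G)) \to \tilde{V}^{1,1}_{t_1, h_1}$ satisfying
\begin{align*}
\mathrm{(F1)}\quad & \langle v - \Pi_{t_1, h_1} v,\, \mu_{\Delta t_2, h_2}\rangle_\sigma = 0 \quad \text{for all } \mu_{\Delta t_2, h_2}\in V^{0,0}_{t_2, h_2}\, ,\\
\mathrm{(F2)}\quad & \|\Pi_{t_1, h_1} v\|_{0, \frac{1}{2}, \sigma, \ast} \leq C_F\,\|v\|_{0, \frac{1}{2}, \sigma, \ast}\, ,
\end{align*}
with $C_F$ independent of the mesh sizes. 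Granted such an operator, taking $\mu_{\Delta t_1, h_1} = \Pi_{t_1, h_1} v$ gives
\begin{equation*}
\sup_{\mu_{\Delta t_1, h_1}} \frac{\langle \mu_{\Delta t_1, h_1}, \lambda_{\Delta t_2, h_2}\rangle_\sigma}{\|\mu_{\Delta t_1, h_1}\|_{0, \frac{1}{2}, \sigma, \ast}} \;\geq\; \frac{\langle \Pi_{t_1, h_1} v, \lambda_{\Delta t_2, h_2}\rangle_\sigma}{C_F\|v\|_{0, \frac{1}{2}, \sigma, \ast}} \;=\; \frac{\langle v, \lambda_{\Delta t_2, h_2}\rangle_\sigma}{C_F\|v\|_{0, \frac{1}{2}, \sigma, \ast}} \;\geq\; \frac{1-\varepsilon}{C_F}\,\|\lambda_{\Delta t_2, h_2}\|_{0, -\frac{1}{2}, \sigma}\, ,
\end{equation*}
where the equality uses (F1) together with $\lambda_{\Delta t_2, h_2} \in V^{0,0}_{t_2, h_2}$. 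This yields the claim with $\alpha = (1-\varepsilon)/C_F$.

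For the construction of $\Pi_{t_1, h_1}$, the natural candidate is a space-time tensor product of a Scott--Zhang or Cl\'ement quasi-interpolant in space and an $L^2$-type projection in time onto the continuous piecewise linears, followed by a correction that restores (F1). Concretely, on each coarse space-time cell $T_2$ of the product mesh $\mathcal{T}_{S,T}^{(2)} = \mathcal{T}_{S_2}\times \mathcal{T}_{T_2}$ one subtracts the cell-mean of the error by adding a fine-mesh bubble-type basis function of $\tilde{V}^{1,1}_{t_1, h_1}$ supported strictly inside $T_2$. The mesh hypothesis $\max\{h_1, \Delta t_1\}/\min\{h_2, \Delta t_2\} < C$ with $C$ small guarantees that each coarse cell contains sufficiently many fine cells, so such correcting bubbles exist and their $L^2$-mass on $T_2$ admits a uniform lower bound of order $h_2\, \Delta t_2^{1/2}$.

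The main obstacle is the verification of the continuity condition (F2) in the anisotropic, non-local norm $\|\cdot\|_{0, \frac{1}{2}, \sigma, \ast}$. Unlike an $L^2$-in-space setting, one cannot simply patch local cell contributions, because $\tilde{H}^{\frac{1}{2}}(G)$ is not a local space. I would control the correction term by the inverse inequality $\|\phi_{\mathrm{bub}}\|_{0, \frac{1}{2}, \sigma, \ast} \lesssim \min\{h_1, \Delta t_1\}^{-\frac{1}{2}}\,\|\phi_{\mathrm{bub}}\|_{0, 0, \sigma}$, combined with the lower mass bound above for the bubbles and $L^2$-stability of the cell-mean functional, so that the correction is bounded by a uniform constant times $\|v\|_{0, \frac{1}{2}, \sigma, \ast}$ precisely when the ratio $C$ is small. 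Stability of the underlying Scott--Zhang and temporal interpolants in $\|\cdot\|_{0, \frac{1}{2}, \sigma, \ast}$ follows by Fourier characterization of the norm as in Definition \ref{sobdef} together with the arguments of \cite{glaefke}. The smallness of $C$ is exactly what absorbs the inverse-inequality factor and delivers a mesh-uniform $C_F$, hence a mesh-uniform $\alpha$.
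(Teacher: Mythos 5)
Your proposal takes the classical Fortin-operator route, which is genuinely different from the paper's argument. The paper avoids constructing any Fortin operator: for a given discrete $\lambda_{\Delta t_2,h_2}$ it solves an auxiliary half-space wave problem with Neumann data $\lambda_{\Delta t_2,h_2}$, obtains $z$ through the Neumann-to-Dirichlet operator, exploits the coercivity $\langle z,\lambda_{\Delta t_2,h_2}\rangle_\sigma \gtrsim \|\lambda_{\Delta t_2,h_2}\|_{0,-\frac{1}{2},\sigma}^2$, and approximates this \emph{single} auxiliary function $z$ by some $\hat\mu_{\Delta t_1,h_1}$. Crucially, the discreteness of $\lambda_{\Delta t_2,h_2}$ gives it extra spatial regularity $H^{-\frac12+\delta}$, which propagates through the Neumann-to-Dirichlet map to $z\in H^0_\sigma(\mathbb{R}^+,H^{\frac12+\delta}(G))$; approximation by the fine mesh gains a factor $(\max\{h_1,\Delta t_1\})^\delta$, and an inverse inequality on $\lambda_{\Delta t_2,h_2}$ at the coarse mesh level costs only $(\min\{h_2,\Delta t_2\})^{-\delta}$, so the ratio $C^\delta$ is what appears in the final inf-sup constant. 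The key gain $\delta>0$ comes entirely from $\lambda_{\Delta t_2,h_2}$ being a piecewise polynomial. A Fortin operator, by contrast, must be bounded and orthogonality-preserving for \emph{all} $v\in H^0_\sigma(\mathbb{R}^+,\tilde H^{\frac12}(G))$, with no extra regularity available, which is considerably harder.

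This is precisely where your sketch has a gap. To verify $\mathrm{(F2)}$ uniformly, you need the $L^2$-norm of the bubble correction to carry a factor of order $h_1^{1/2}$ (in space) to cancel the inverse inequality $\|\phi\|_{0,\frac12,\sigma,\ast}\lesssim h_1^{-1/2}\|\phi\|_{0,0,\sigma}$ that you invoke. In the spatial direction a Scott--Zhang/Cl\'ement quasi-interpolant does provide $\|v-\Pi_x v\|_{L^2}\lesssim h_1^{1/2}\|v\|_{H^{1/2}}$. But the temporal regularity of $v$ in the norm $\|\cdot\|_{0,\frac12,\sigma,\ast}$ is only $L^2$, so the time $L^2$-projection onto piecewise linears provides no rate at all: $\|(I-\Pi_t)v\|_{L^2_t L^2_x}\lesssim \|v\|_{L^2_t L^2_x}$ with no small factor. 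The coarse-cell temporal averages of $(I-\Pi_t)v$ vanish only if the coarse time intervals are unions of fine ones, which the hypothesis $\frac{\max\{h_1,\Delta t_1\}}{\min\{h_2,\Delta t_2\}}<C$ does not imply (it is a ratio bound, not a nesting assumption). Even assuming nesting up to boundary effects, the residual temporal contribution to the bubble coefficients scales like $(\Delta t_1/\Delta t_2)^{1/2}\|v\|_{0,0,\sigma}$ per cell, and after the spatial inverse inequality this produces a factor $h_1^{-1/2}(\Delta t_1/\Delta t_2)^{1/2}$, which for a CFL ratio $\Delta t_1\sim h_1$ behaves like $\Delta t_2^{-1/2}$ and is \emph{not} absorbed by smallness of $C$. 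So the step ``the smallness of $C$ absorbs the inverse-inequality factor'' does not go through as stated. The paper circumvents all of this by trading the Fortin-operator requirement for a one-function approximation argument powered by the inverse inequality on the discrete $\lambda_{\Delta t_2,h_2}$.
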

{
\begin{proof}
Let $z$ be the solution to the equation $z-\partial_t^2z - \Delta z = 0$ in $\R^+ \times \Omega$ with boundary conditions 
$$\frac{\partial z}{\partial t} = 0 \quad \text{at $t=0$,} \ \ \frac{\partial z}{\partial \nu}=\lambda_{\Delta t_2,h_2}\quad \text{on $\R^+ \times G$},  \ \ z=0\quad \text{on $\R^+ \times \Gamma\setminus \overline{G}$}, \ \ z \to 0 \quad \text{as $t \to \infty$} \ .$$
We know from the coercivity of the Neumann-to-Dirichlet operator, \textcolor{black}{which follows from \eqref{symbolest2}}, that
\begin{align} \label{coerc}
\langle z, \lambda_{\Delta t_,h_2}\rangle_{\textcolor{black}{\sigma}} = \langle z, \partial_\nu z \rangle_{\textcolor{black}{\sigma}} \geq C' \|\partial_\nu z\|_{0, -\frac{1}{2}, \sigma}^2=C' \|\lambda_{\Delta t_2,h_2}\|_{0, -\frac{1}{2}, \sigma}^2.
\end{align}

Let $\delta$ such that $ \|z\|_{0, \frac{1}{2}+\delta, \sigma }<\infty$. From the approximation properties \textcolor{black}{of the space-time anisotropic Sobolev spaces, Proposition 3.56 in \cite{glaefke},} we  note that there exists a function $\hat{\mu}_{\Delta t_1,h_1}$ with
\begin{align*}
\|z - \hat{\mu}_{\Delta t_1,h_1}\|_{0, \frac{1}{2}, \sigma, \ast } \lesssim (\max\{h_1, \Delta t_1\})^\delta \|z\|_{0, \frac{1}{2}+\delta, \sigma }\ .
 \end{align*}
By the continuity of the Neumann-to-Dirichlet operator, \textcolor{black}{from the symbol estimates \eqref{symbolest1},}  the right hand side is estimated by 
\begin{align*}
(h_1 + \Delta t_1)^\delta  \|z\|_{0, \frac{1}{2}+\delta, \sigma} =(\max\{h_1, \Delta t_1\})^\delta  \| \lambda_{\Delta t_2,h_2}\|_{0, -\frac{1}{2}+\delta, \sigma}\ .
\end{align*}
Using an inverse inequality for $\lambda_{\Delta t_2,h_2}$, we conclude
\begin{equation}\label{auxil}
\| z - \hat{\mu}_{\Delta t_1,h_1}\|_{0, \frac{1}{2}, \sigma, G,\ast } \lesssim \frac{(\max\{h_1, \Delta t_1\})^\delta }{(\min\{h_2, \Delta t_2\})^\delta } \| \lambda_{\Delta t_2,h_2}\|_{0, -\frac{1}{2}, \sigma }\ . \end{equation}
From the continuity of the Neumann-to-Dirichlet operator and (\ref{auxil}), we obtain 
\begin{align}\label{estSup}
{\|\hat{\mu}_{\Delta t_1, h_1}\|_{0, \frac{1}{2}, \sigma, \ast }}&{
 \leq \|z - \hat{\mu}_{\Delta t_1, h_1}\|_{0, \frac{1}{2}, \sigma, \ast} + \|z\|_{0, \frac{1}{2}, \sigma,\ast }}\\& {\lesssim \frac{(\max\{h_1, \Delta t_1\})^\delta }{(\min\{h_2, \Delta t_2\})^\delta } \| \lambda_{\Delta t_2,h_2}\|_{0, -\frac{1}{2}, \sigma }+\|z\|_{0, \frac{1}{2},\sigma, \ast}} \nonumber \\ 
& {
\lesssim \frac{(\max\{h_1, \Delta t_1\})^\delta }{(\min\{h_2, \Delta t_2\})^\delta } \| \lambda_{\Delta t_2,h_2}\|_{0, -\frac{1}{2}, \sigma } + \|\lambda_{\Delta t_2,h_2}\|_{0,-\frac{1}{2},\sigma}\ .}
\end{align}
{
Using $\hat{\mu}_{\Delta t_1,h_1}$ from above and  (\ref{estSup}), we now estimate:}
\begin{align*}
{
\sup_{\mu_{\Delta t_1, h_1}} \frac{\langle {\mu}_{\Delta t_1, h_1}, \lambda_{\Delta t_2,h_2}\rangle_{\textcolor{black}{\sigma}}}{\|{\mu}_{\Delta t_1,h_1}\|_{ 0,\frac{1}{2}, \sigma, \ast}}}  &{
\geq   \frac{\langle \hat{\mu}_{\Delta t_1, h_1}, \lambda_{\Delta t_2,h_2}\rangle_{\textcolor{black}{\sigma}}}{\|\hat{\mu}_{\Delta t_1,h_1}\|_{ 0,\frac{1}{2}, \sigma, \ast}}  \gtrsim \frac{\langle \hat{\mu}_{\Delta t_1, h_1}, \lambda_{\Delta t_2,h_2}\rangle_{\textcolor{black}{\sigma}}}{\left(1+\frac{(\max\{h_1, \Delta t_1\})^\delta }{(\min\{h_2, \Delta t_2\})^\delta }\right)\ \|\lambda_{\Delta t_2,h_2}\|_{ 0,-\frac{1}{2}, \sigma}} }\\ &{
= \frac{1}{\left(1+\frac{(\max\{h_1, \Delta t_1\})^\delta }{(\min\{h_2, \Delta t_2\})^\delta }\right)\ \|\lambda_{\Delta t_2,h_2}\|_{ 0,-\frac{1}{2}, \sigma}} \left( \langle  z,\lambda_{\Delta t_2,h_2} \rangle_{\textcolor{black}{\sigma}} - \langle z -\hat{\mu}_{\Delta t_1, h_1},\lambda_{\Delta t_2,h_2}\rangle_{\textcolor{black}{\sigma}} \right)\ .}
\end{align*}
The first term  $\langle z,\lambda_{\Delta t_2,h_2} \rangle_{\textcolor{black}{\sigma}}$ is estimated from below by $\|\lambda_{\Delta t_2,h_2}\|_{0,-\frac{1}{2}, \sigma}$, with the help of (\ref{coerc}), while for the second we have 
\begin{align}
 \langle z -\hat{\mu}_{\Delta t_1, h_1}, \lambda_{\Delta t_2,h_2}\rangle_{\textcolor{black}{\sigma}} \leq \| z -\hat{\mu}_{\Delta t_1, h_1}\|_{0,\frac{1}{2},\sigma,\ast} \|\lambda_{\Delta t_2,h_2}\|_{0,-\frac{1}{2},\sigma} \lesssim \frac{(\max\{h_1, \Delta t_1\})^\delta }{(\min\{h_2, \Delta t_2\})^\delta } \| \lambda_{\Delta t_2, h_2}\|_{0, -\frac{1}{2}, \sigma}^2,
 \end{align}
where we used (\ref{auxil}). Therefore
\begin{align*}
{
\sup_{\mu_{\Delta t_1, h_1}} \frac{\langle {\mu}_{\Delta t_1, h_1}, \lambda_{\Delta t_2,h_2}\rangle_{\textcolor{black}{\sigma}}}{\|{\mu}_{\Delta t_1,h_1}\|_{ 0,\frac{1}{2}, \sigma, \ast}}} \gtrsim \frac{C' - \frac{(\max\{h_1, \Delta t_1\})^\delta }{(\min\{h_2, \Delta t_2\})^\delta } }{1+\frac{(\max\{h_1, \Delta t_1\})^\delta }{(\min\{h_2, \Delta t_2\})^\delta }} \| \lambda_{\Delta t_2, h_2}\|_{0, -\frac{1}{2}, \sigma}\ .
\end{align*}
The assertion follows. 
\end{proof}
\textcolor{black}{For the time-independent elliptic problem, results related to Theorem \ref{discInfSup} may be found in \cite{gatica}.}

\begin{theorem}\label{mixedtheorem}
The discrete mixed formulation \eqref{discMixedFormulation} admits a unique solution. The following a priori estimates hold: 
\begin{align}
\| \lambda -\lambda_{\Delta t_2,h_2} \|_{0,-\frac{1}{2}, \sigma}  &\lesssim  \inf \limits_{\tilde{\lambda}_{\Delta t_2,h_2}} \| \lambda - \tilde{\lambda}_{\Delta t_2,h_2} \|_{0,-\frac{1}{2},\sigma} +(\Delta t_1)^{-\frac{1}{2}}  \|u- u_{\Delta t_1,h_1}\|_{-\frac{1}{2},\frac{1}{2},\sigma, \ast} \ ,\\
\|u-u_{\Delta t_1,h_1}\|_{-\frac{1}{2},\frac{1}{2},\sigma, \ast} &\lesssim_\sigma \inf \limits_{v_{\Delta t_1,h_1}}
\|u-v_{\Delta t_1,h_1}\|_{\frac{1}{2},\frac{1}{2},\sigma, \ast}\nonumber \\& \qquad
+ \inf \limits_{\tilde{\lambda}_{\Delta t_2,h_2}}\left\{\|\tilde{\lambda}_{\Delta t_2,h_2} - \lambda\|_{\frac{1}{2},-\frac{1}{2},\sigma} +\|\tilde{\lambda}_{\Delta t_2,h_2} - {\lambda}_{\Delta t_2,h_2}\|_{\frac{1}{2},-\frac{1}{2},\sigma}\right\} \ . 
\end{align}
\end{theorem}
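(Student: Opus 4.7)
The proof splits into three pieces: existence and uniqueness of the discrete solution, the estimate on the multiplier error, and the estimate on the displacement error. For existence and uniqueness, the problem \eqref{discMixedFormulation} is a finite-dimensional saddle-point inequality in which the bilinear form $\langle \gamadi_{\Delta t_1, h_1} \cdot, \cdot\rangle_\sigma$ is coercive on $\tilde{V}_{t_1,h_1}^{1,1}$ by Theorem \ref{mappingProperties} and the constraint operator satisfies the inf-sup condition of Theorem \ref{discInfSup}. Existence of $u_{\Delta t_1,h_1}$ for any admissible $\lambda_{\Delta t_2,h_2}$ follows from coercivity, the VI condition (b) then determines $\lambda_{\Delta t_2,h_2}$ uniquely via the inf-sup, and by standard arguments for discrete mixed variational inequalities (as in Haslinger--Hlaváček--Nečas or Brezzi--Hager--Raviart) the pair is unique. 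The two quantitative estimates will be obtained by Galerkin orthogonality from (a) combined with the complementarity conditions hidden in (b).

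\textbf{Multiplier estimate.} For an arbitrary $\tilde{\lambda}_{\Delta t_2,h_2}$, I would apply Theorem \ref{discInfSup} to the discrete element $\lambda_{\Delta t_2,h_2}-\tilde{\lambda}_{\Delta t_2,h_2}$ and then split the numerator of the sup as $\langle \mu_{\Delta t_1,h_1},\lambda_{\Delta t_2,h_2}-\lambda\rangle_\sigma+\langle \mu_{\Delta t_1,h_1},\lambda-\tilde{\lambda}_{\Delta t_2,h_2}\rangle_\sigma$. Subtracting \eqref{discMixedFormulation}(a) from \eqref{mixedFormulationProof}(a) yields the Galerkin relation $\langle \lambda-\lambda_{\Delta t_2,h_2},\mu_{\Delta t_1,h_1}\rangle_\sigma = \langle \gamadi_\sigma(u-u_{\Delta t_1,h_1}),\mu_{\Delta t_1,h_1}\rangle_\sigma$, which combined with the continuity $\|\gamadi_\sigma v\|_{-\frac{1}{2},-\frac{1}{2},\sigma}\lesssim \|v\|_{-\frac{1}{2},\frac{1}{2},\sigma,\ast}$ from Theorem \ref{mappingProperties} and the dual pairing produces the factor $\|\mu_{\Delta t_1,h_1}\|_{\frac{1}{2},\frac{1}{2},\sigma,\ast}$. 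An inverse inequality in time on $\tilde{V}_{t_1,h_1}^{1,1}$, of the form $\|\mu_{\Delta t_1,h_1}\|_{\frac{1}{2},\frac{1}{2},\sigma,\ast}\lesssim (\Delta t_1)^{-\frac{1}{2}}\|\mu_{\Delta t_1,h_1}\|_{0,\frac{1}{2},\sigma,\ast}$, converts this into the denominator of the inf-sup quotient and generates the claimed $(\Delta t_1)^{-\frac{1}{2}}$ factor. A triangle inequality closes the first estimate.

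\textbf{Displacement estimate and main obstacle.} Starting from the coercivity $\|u-u_{\Delta t_1,h_1}\|_{-\frac{1}{2},\frac{1}{2},\sigma,\ast}^{2}\lesssim_\sigma \langle \gamadi_\sigma(u-u_{\Delta t_1,h_1}),u-u_{\Delta t_1,h_1}\rangle_\sigma$ of Theorem \ref{mappingProperties}, I insert an arbitrary $v_{\Delta t_1,h_1}\in\tilde{V}_{t_1,h_1}^{1,1}$ and use the Galerkin relation above with test function $v_{\Delta t_1,h_1}-u_{\Delta t_1,h_1}$ to obtain the decomposition $\langle \gamadi_\sigma(u-u_{\Delta t_1,h_1}),u-v_{\Delta t_1,h_1}\rangle_\sigma + \langle \lambda-\lambda_{\Delta t_2,h_2},v_{\Delta t_1,h_1}-u\rangle_\sigma + \langle \lambda-\lambda_{\Delta t_2,h_2},u-u_{\Delta t_1,h_1}\rangle_\sigma$. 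The first term is controlled by continuity of $\gamadi_\sigma$ and absorbed by Young's inequality; the middle term is controlled by inserting $\tilde{\lambda}_{\Delta t_2,h_2}$, the duality $\|\cdot\|_{\frac{1}{2},-\frac{1}{2},\sigma}\times\|\cdot\|_{-\frac{1}{2},\frac{1}{2},\sigma,\ast}$ and the embedding $\|v_{\Delta t_1,h_1}-u\|_{-\frac{1}{2},\frac{1}{2},\sigma,\ast}\leq \|v_{\Delta t_1,h_1}-u\|_{\frac{1}{2},\frac{1}{2},\sigma,\ast}$. The delicate third term is the main obstacle: its sign is fixed only through the complementarity identities $\langle u,\lambda\rangle_\sigma=0=\langle u_{\Delta t_1,h_1},\lambda_{\Delta t_2,h_2}\rangle_\sigma$, obtained by choosing $\mu=2\lambda$ and $\mu=0$ in \eqref{mixedFormulationProof}(b), respectively $\mu_{\Delta t_2,h_2}=2\lambda_{\Delta t_2,h_2}$ and $\mu_{\Delta t_2,h_2}=0$ in \eqref{discMixedFormulation}(b); together with the nonnegativity of $u,u_{\Delta t_1,h_1},\lambda,\lambda_{\Delta t_2,h_2}$ as distributions these give $\langle \lambda-\lambda_{\Delta t_2,h_2},u-u_{\Delta t_1,h_1}\rangle_\sigma = -\langle u,\lambda_{\Delta t_2,h_2}\rangle_\sigma-\langle u_{\Delta t_1,h_1},\lambda\rangle_\sigma \leq 0$, so this term can simply be dropped. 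Taking the infimum and square roots yields the second estimate; verifying that the sign argument remains valid for the discrete nonconforming pair $(\tilde{V}_{t_1,h_1}^{1,1},(V_{t_2,h_2}^{0,0})^+)$ on two independent meshes, and that the inverse inequality used in the first part holds in the anisotropic shifted norm, are the only genuinely nontrivial technical points.
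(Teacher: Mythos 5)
Your handling of the multiplier estimate coincides with the paper's: both start from the discrete inf-sup condition of Theorem \ref{discInfSup} applied to $\lambda_{\Delta t_2,h_2}-\tilde{\lambda}_{\Delta t_2,h_2}$, use the Galerkin relation obtained by subtracting \eqref{discMixedFormulation}(a) from \eqref{mixedFormulationProof}(a), and generate the factor $(\Delta t_1)^{-\frac{1}{2}}$ through an inverse inequality in time together with the continuity of $\gamadi_\sigma$. The displacement estimate, however, contains a genuine gap. You test coercivity with $u-u_{\Delta t_1,h_1}$, which produces the term $\langle\lambda-\lambda_{\Delta t_2,h_2},\,u-u_{\Delta t_1,h_1}\rangle_\sigma$, and you discard it by writing it as $-\langle u,\lambda_{\Delta t_2,h_2}\rangle_\sigma-\langle u_{\Delta t_1,h_1},\lambda\rangle_\sigma\le 0$ under the assumption that $u_{\Delta t_1,h_1}\ge 0$. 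But in \eqref{discMixedFormulation} the discrete displacement lives in $\tilde{V}_{t_1,h_1}^{1,1}$ with no sign constraint; the inequality (b) only forces $\langle u_{\Delta t_1,h_1},\mu_{\Delta t_2,h_2}\rangle_\sigma\ge 0$ against the nonnegative cone of the coarser, lower-order space $(V_{t_2,h_2}^{0,0})^+$, which does not yield pointwise nonnegativity of $u_{\Delta t_1,h_1}$. Hence $\langle u_{\Delta t_1,h_1},\lambda\rangle_\sigma$ has no a priori sign, and the term you flag as a ``nontrivial technical point'' is in fact an obstruction: the Falk-type sign argument you borrow from the conforming variational-inequality setting of Theorem \ref{apriori1} does not transfer to this nonconforming mixed pair.

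The paper's proof sidesteps this entirely by a different splitting. It bounds $\|u_{\Delta t_1,h_1}-v_{\Delta t_1,h_1}\|_{-\frac{1}{2},\frac{1}{2},\sigma,\ast}^2$ via coercivity, so that the Galerkin relation is applied with the \emph{discrete} test function $u_{\Delta t_1,h_1}-v_{\Delta t_1,h_1}\in\tilde{V}_{t_1,h_1}^{1,1}$. The resulting term $\langle\lambda_{\Delta t_2,h_2}-\lambda,\,u_{\Delta t_1,h_1}-v_{\Delta t_1,h_1}\rangle_\sigma$ is then estimated purely by the duality $\|\cdot\|_{\frac{1}{2},-\frac{1}{2},\sigma}\times\|\cdot\|_{-\frac{1}{2},\frac{1}{2},\sigma,\ast}$ after inserting $\tilde{\lambda}_{\Delta t_2,h_2}$ — no sign information on $u_{\Delta t_1,h_1}$ is needed — and the theorem follows by dividing out one power of $\|u_{\Delta t_1,h_1}-v_{\Delta t_1,h_1}\|_{-\frac{1}{2},\frac{1}{2},\sigma,\ast}$ and applying the triangle inequality. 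If you replace your coercivity step with this $u_{\Delta t_1,h_1}-v_{\Delta t_1,h_1}$ variant, the problematic third term never appears and the rest of your outline (continuity, duality, infimum) goes through unchanged.
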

\begin{proof}
For a fixed mesh, the weak coercivity implies that the discretization of $\mathcal{S}_\sigma$ is positive definite. The existence therefore follows from standard results for elliptic problems.\\
For the a priori estimate we first note that for arbitrary $\tilde{\lambda}_{\Delta t_2, h_2}$ the following identity holds:
\begin{align} \label{lambdaEq}
\langle \lambda_{\Delta t_2,h_2}-\tilde{\lambda}_{\Delta t_2,h_2}, v_{\Delta t_1,h_1} \rangle_{\textcolor{black}{\sigma}} & = \langle \mathcal{S}_\sigma u_{\Delta t_1,h_1}, v_{\Delta t_1,h_1} \rangle_{\textcolor{black}{\sigma}}- \langle h, v_{\Delta t_1,h_1}\rangle_{\textcolor{black}{\sigma}} - \langle \tilde{\lambda}_{\Delta t_2,h_2}, v_{\Delta t_1,h_1} \rangle_{\textcolor{black}{\sigma}} \nonumber \\  &= \langle \mathcal{S}_\sigma u_{\Delta t_1,h_1}, v_{\Delta t_1,h_1} \rangle_{\textcolor{black}{\sigma}} - \langle \mathcal{S}_\sigma u, v_{\Delta t_1,h_1} \rangle_{\textcolor{black}{\sigma}} +\langle  \lambda , v_{\Delta t_1,h_1} \rangle_{\textcolor{black}{\sigma}} -\langle \tilde{\lambda}_{\Delta t_2,h_2}, v_{\Delta t_1,h_1} \rangle_{\textcolor{black}{\sigma}} \nonumber 
\\ &= \langle \mathcal{S}_\sigma (u_{\Delta t_1,h_1} -u),v_{\Delta t_1,h_1} \rangle_{\textcolor{black}{\sigma}} + \langle \lambda - \tilde{\lambda}_{\Delta t_2,h_2}, v_{\Delta t_1,h_1} \rangle_{\textcolor{black}{\sigma}},
\end{align} 
where we made use of (\ref{mixedFormulationProof}) and (\ref{discMixedFormulation}).
By the inf-sup condition (\ref{discInfSup}) and (\ref{lambdaEq}), we have:
\begin{align*}
\alpha \| \lambda_{\Delta t_2 h_2}-\tilde{\lambda}_{\Delta t_2,h_2}\|_{0,-\frac{1}{2},\sigma} &\leq \sup_{ v_{\Delta t_1,h_1}} \frac{ \langle  \lambda_{\Delta t_2,h_2}-\tilde{\lambda}_{\Delta t_2,h_2}, v_{\Delta t_1,h_1} \rangle_{\textcolor{black}{\sigma}}}{\|v_{\Delta t_1,h_1}\|_{0,\frac{1}{2}, \sigma, \ast} }
\\ & =
\sup_{ v_{\Delta t_1,h_1}} \frac{\langle \mathcal{S}_\sigma (u_{\Delta t_1,h_1} -u),v_{\Delta t_1,h_1} \rangle_{\textcolor{black}{\sigma}} + \langle \lambda-\tilde{\lambda}_{\Delta t_2,h_2}, v_{\Delta t_1,h_1} \rangle_{\textcolor{black}{\sigma}}}{\|v_{\Delta t_1,h_1}\|_{0,\frac{1}{2}, \sigma, \ast}}\ .
\end{align*}
We estimate both terms separately. From duality and an inverse inequality in time we obtain for the first term
\begin{align*}
|\langle \mathcal{S}_\sigma (u_{\Delta t_1,h_1} -u),v_{\Delta t_1,h_1} \rangle_{\textcolor{black}{\sigma}}| & \leq \| \mathcal{S}_\sigma(u_{\Delta t_1,h_1}-u)\|_{-\frac{1}{2},-\frac{1}{2},\sigma} \|{v_{\Delta t_1,h_1}}\|_{\frac{1}{2},\frac{1}{2},\sigma, \ast} 
\\ &\lesssim
\| u_{\Delta t_1,h_1} -u\|_{-\frac{1}{2},\frac{1}{2},\sigma,\ast}(\Delta t_1)^{-\frac{1}{2}} \|v_{\Delta t_1,h_1}\|_{0,\frac{1}{2},\sigma, \ast}\ .\end{align*}
A similar argument applied to the second term yields:
\begin{align*}
|\langle \lambda-\tilde{\lambda}_{\Delta t_2,h_2}, v_{\Delta t_1,h_1} \rangle_{\textcolor{black}{\sigma}}|& \leq \| \lambda-\tilde{\lambda}_{\Delta t_2,h_2} \|_{0,-\frac{1}{2},\sigma} \|v_{\Delta t_1,h_1}\|_{0,\frac{1}{2},\sigma, \ast}
\ .
\end{align*} 
We obtain the a priori estimate 
\begin{align}\label{lambdaapriori}
& \| \lambda -\lambda_{\Delta t_2,h_2} \|_{0,-\frac{1}{2}, \sigma}  \lesssim_\sigma  \inf \limits_{\tilde{\lambda}_{\Delta t_2,h_2}} \| \lambda - \tilde{\lambda}_{\Delta t_2,h_2} \|_{0,-\frac{1}{2},\sigma} +(\Delta t_1)^{-\frac{1}{2}}  \| u_{\Delta t_1,h_1}-u\|_{-\frac{1}{2},\frac{1}{2},\sigma, \ast} \ .
\end{align}
Next we combine the Galerkin orthogonality
\begin{equation*}
\langle\gamadi_\sigma(u-u_{\Delta t_1,h_1}),V_{\Delta t_1,h_1}\rangle_{\textcolor{black}{\sigma}} = \langle \lambda -{\lambda}_{\Delta t_2,h_2}, V_{\Delta t_1,h_1}\rangle_{\textcolor{black}{\sigma}}
\end{equation*}
with the coercivity of the Dirichlet-to-Neumann operator to obtain
\begin{align*}
\|u_{\Delta t_1,h_1} - v_{\Delta t_1,h_1}\|_{-\frac{1}{2},\frac{1}{2},\sigma, \ast}^2 &\lesssim_\sigma\langle\gamadi_\sigma(u_{\Delta t_1,h_1}-v_{\Delta t_1,h_1}), u_{\Delta t_1,h_1}- v_{\Delta t_1,h_1}\rangle_{\textcolor{black}{\sigma}} \\
& = \langle\gamadi_\sigma(u-v_{\Delta t_1,h_1}), u_{\Delta t_1,h_1}- v_{\Delta t_1,h_1}\rangle_{\textcolor{black}{\sigma}}+ \langle\gamadi_\sigma(u_{\Delta t_1,h_1}-u), u_{\Delta t_1,h_1}- v_{\Delta t_1,h_1}\rangle_{\textcolor{black}{\sigma}}\\ 
& = \langle \gamadi_\sigma(u-v_{\Delta t_1,h_1}),u_{\Delta t_1,h_1}- v_{\Delta t_1,h_1}\rangle_{\textcolor{black}{\sigma}}\\& \qquad+\langle \tilde{\lambda}_{\Delta t_2,h_2} - \lambda+{\lambda}_{\Delta t_2,h_2}-\tilde{\lambda}_{\Delta t_2,h_2}, u_{\Delta t_1,h_1}-  v_{\Delta t_1,h_1}\rangle_{\textcolor{black}{\sigma}} 
\end{align*}
for all $v_{\Delta t_1,h_1}$ and $\tilde{\lambda}_{\Delta t_2,h_2}$. From the mapping properties and the continuity of the dual pairing, we conclude
\begin{align*}
\|u_{\Delta t_1,h_1} - v_{\Delta t_1,h_1}\|_{-\frac{1}{2},\frac{1}{2},\sigma, \ast}^2 &\lesssim \|u-v_{\Delta t_1,h_1}\|_{\frac{1}{2},\frac{1}{2},\sigma, \ast}\|u_{\Delta t_1,h_1}- v_{\Delta t_1,h_1}\|_{-\frac{1}{2},\frac{1}{2},\sigma, \ast}\\
& \quad +\|\tilde{\lambda}_{\Delta t_2,h_2} - \lambda\|_{\frac{1}{2},-\frac{1}{2},\sigma} \| u_{\Delta t_1,h_1}-  v_{\Delta t_1,h_1}\|_{-\frac{1}{2},\frac{1}{2},\sigma, \ast} \\ & \quad + \|\tilde{\lambda}_{\Delta t_2,h_2}-{\lambda}_{\Delta t_2,h_2}\|_{\frac{1}{2},-\frac{1}{2},\sigma} \| u_{\Delta t_1,h_1}-  v_{\Delta t_1,h_1}\|_{-\frac{1}{2},\frac{1}{2},\sigma, \ast}\ . 
\end{align*}
Therefore,
\begin{align*}
\|u_{\Delta t_1,h_1} - v_{\Delta t_1,h_1}\|_{-\frac{1}{2},\frac{1}{2},\sigma, \ast} &\lesssim_\sigma \|u-v_{\Delta t_1,h_1}\|_{\frac{1}{2},\frac{1}{2},\sigma, \ast}
+\|\tilde{\lambda}_{\Delta t_2,h_2} - \lambda\|_{\frac{1}{2},-\frac{1}{2},\sigma}  + \|\tilde{\lambda}_{\Delta t_2,h_2} - {\lambda}_{\Delta t_2,h_2}\|_{\frac{1}{2},-\frac{1}{2},\sigma}\ . 
\end{align*}
It follows that
\begin{align*}
\|u-u_{\Delta t_1,h_1}\|_{-\frac{1}{2},\frac{1}{2},\sigma, \ast} &\lesssim_\sigma \|u-v_{\Delta t_1,h_1}\|_{\frac{1}{2},\frac{1}{2},\sigma, \ast}
+\|\tilde{\lambda}_{\Delta t_2,h_2} - \lambda\|_{\frac{1}{2},-\frac{1}{2},\sigma} +\|\tilde{\lambda}_{\Delta t_2,h_2} - {\lambda}_{\Delta t_2,h_2}\|_{\frac{1}{2},-\frac{1}{2},\sigma} \ . 
\end{align*}
\end{proof}

\section{A variational inequality for the single layer operator}\label{Vsection}

\textcolor{black}{We now consider the punch problem from Section \ref{physics}, as given by \eqref{stampprob1}, which models a rigid body indenting an elastic half-space. If the shearing strains vanish at $x_n=0$, one may represent $w$ in terms of the single layer potential $$w(t,x) = 2\int_{\mathbb{R}^+ \times \mathbb{R}^{n-1}}  \textcolor{black}{\gamma}(t- \tau,x,y)\ \sigma_{x_n}(\tau,y)\ d\tau\ ds_y\ ,$$ where $\textcolor{black}{\gamma}$ is a fundamental solution to the wave equation. Using this ansatz, the punch conditions become 
$$V \sigma_{x_n} = \textcolor{black}{2} (\phi + \eta), \quad \sigma_{x_n}\geq 0 \quad \text{ in } \mathbb{R}\times G$$
and 
$$V \sigma_{x_n} \leq  \textcolor{black}{2} (\phi + \eta), \quad \sigma_{x_n}= 0 \quad \text{ in } \mathbb{R}\times  \mathbb{R}^{n-1}\setminus \overline{G} \ .$$ The conditions may be equivalently reformulated as a variational inequality
$$\langle p_Q V \sigma_{x_n}, \sigma_{x_n} - v\rangle_{\textcolor{black}{\sigma}} \geq \langle  \textcolor{black}{2}( \phi + \eta), \sigma_{x_n} - v \rangle_{\textcolor{black}{\sigma}} \ ,$$
for arbitrary functions $v \geq 0$ in a suitable Sobolev space. Writing $u=\sigma_{x_n}$ and $h =  \textcolor{black}{2}(\phi + \eta)$, we have the following precise formulation of the punch problem \eqref{stampprob1} as a variational inequality for the single-layer operator $V$:
}

Find $u \in  {H}^{\frac{1}{2}}_\sigma(\R^+, \tilde{H}^{-\frac{1}{2}}(G))^+$ such that for all $v \in  {H}^{\frac{1}{2}}_\sigma(\R^+, \tilde{H}^{-\frac{1}{2}}(G))^+$:
\begin{align}\label{varVI}
\langle p_Q {V}{u},v-u\rangle_{\textcolor{black}{\sigma}} \geq \langle h, v-u \rangle_{\textcolor{black}{\sigma}}\ .
\end{align}
\textcolor{black}{A proof analogous to Theorem \ref{VIequiv} shows:
\begin{theorem}
The punch problem \eqref{stampprob1} is equivalent to the variational inequality \eqref{varVI}. 
\end{theorem}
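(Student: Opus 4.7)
The plan is to mirror the proof of Theorem \ref{VIequiv}, working with the single-layer operator $V$ in place of the Dirichlet-to-Neumann operator $\gamadi_\sigma$. First I would verify that \eqref{stampprob1} is formally equivalent to the pointwise conditions on $Q=\mathbb{R}\times G$:
\[ u \geq 0, \quad p_Q Vu - h \geq 0, \quad (p_Q Vu - h)\,u = 0. \]
This reduction uses the single-layer ansatz $w = V u$, which for $u$ supported in $\overline{Q}$ solves the wave equation in $\R^n_{\pm}$ and vanishes on $\mathbb{R}\times(\Gamma \setminus G)$, together with the jump relation identifying $-\mu \partial_\nu w$ with $u$ on $G$. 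The conditions \eqref{stampprob1} then translate directly into the displayed system with $h$ as in the statement.

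From these pointwise conditions I would derive the variational inequality \eqref{varVI}: multiplying $p_Q Vu - h \geq 0$ by a nonnegative test function $v \in {H}^{\frac{1}{2}}_\sigma(\R^+, \tilde{H}^{-\frac{1}{2}}(G))^+$ gives $\langle p_Q Vu - h, v\rangle_\sigma \geq 0$, while the complementarity condition yields $\langle p_Q Vu - h, u\rangle_\sigma = 0$. Subtracting the latter produces \eqref{varVI}.

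Conversely, to recover the pointwise conditions from \eqref{varVI} I would substitute $v = u + v'$ with $v' \geq 0$ to obtain $\langle p_Q Vu - h, v'\rangle_\sigma \geq 0$ for all such $v'$, hence $p_Q Vu \geq h$ on $Q$. Choosing $v = 2u \geq 0$ and $v = 0$ then sandwiches $\langle p_Q Vu, u\rangle_\sigma$ between $\geq \langle h, u\rangle_\sigma$ and $\leq \langle h, u\rangle_\sigma$, giving the complementarity identity $\langle p_Q Vu - h, u\rangle_\sigma = 0$. Combined with the already-established sign conditions $u \geq 0$ and $p_Q Vu - h \geq 0$, this forces $(Vu - h)\,u = 0$.

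The main subtlety, rather than a genuine obstacle, is to interpret the ``pointwise'' sign and complementarity statements when $u$ lives in the distributional space $\tilde{H}^{-\frac{1}{2}}(G)$: each relation must be read through the duality pairing on the anisotropic Sobolev spaces, using the mapping properties in Theorems \ref{mappingproperties} and \ref{mapimproved} to ensure that $\langle p_Q Vu, v\rangle_\sigma$ is well-defined for the admissible test functions. Since $V$ is symmetric and shares the structural features of $\gamadi_\sigma$ exploited in the proof of Theorem \ref{VIequiv}, once this duality framework is fixed the algebraic manipulations go through verbatim.
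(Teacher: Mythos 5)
Your proof correctly mirrors the argument for Theorem \ref{VIequiv} with $\gamadi_\sigma$ replaced by $p_Q V$, which is precisely the route the paper intends (the paper only remarks that a proof ``analogous to Theorem \ref{VIequiv}'' suffices and gives no details). Both the forward direction (pair the pointwise conditions against $v\ge 0$ and use complementarity) and the backward direction (set $v=u+v'$, then $v=2u$ and $v=0$, and combine the resulting signs) go through verbatim once the boundary data $w|_G$ and $\sigma_{x_n}|_G$ are identified with $p_QVu$ and $u$, respectively, via the single-layer ansatz. Your caveat about reading the sign and complementarity relations through the duality pairing is the right one, and the mapping properties in Theorems \ref{mappingproperties} and \ref{mapimproved} do make $\langle p_QVu,v\rangle_\sigma$ well defined on the stated spaces.

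One factual error in the reduction step should be corrected, although it does not affect the algebraic equivalence. You write that $w = Vu$ ``vanishes on $\mathbb{R}\times(\Gamma\setminus G)$.'' This is false for the punch problem: the requirement $u\in\tilde{H}^{-\frac{1}{2}}(G)$ encodes that the \emph{density} $u = \sigma_{x_n} = -\mu\,\partial_\nu w$ vanishes outside $G$ (a homogeneous Neumann condition), while the Dirichlet trace $w=Vu$ is generically nonzero on $\Gamma\setminus\overline{G}$. You appear to have transferred the condition $w|_{\Gamma\setminus\overline{G}}=0$ from the contact problem to the punch problem, where it does not hold; the roles of the Dirichlet and Neumann data are exchanged between the two problems. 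Since your equivalence argument uses only the conditions on $G$ and the pairing $\langle p_QVu-h,\cdot\rangle_\sigma$ there, the proof survives, but the description of what the single-layer ansatz enforces outside $G$ should be amended.
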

}
As for the variational inequality for the Dirichlet-to-Neumann operator, a solution exists in the case where $\Omega = \mathbb{R}^n_+$ is the half space:
\textcolor{black}{\begin{theorem}
\label{theoVregularity}
Let $\sigma>0$ and $h\in  H^{\frac{3}{2}}_\sigma(\mathbb{R}^+,{H}^{\frac{1}{2}}(G))$. Then there exists a unique classical solution $u \in H^{\frac{1}{2}}_\sigma(\mathbb{R}^+,\tilde{H}^{-\frac{1}{2}}(G))^+$ of \eqref{varVI}. 
\end{theorem}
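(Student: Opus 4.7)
The plan is to mimic the proof scheme of Cooper \cite{cooper} for the analogous Dirichlet-to-Neumann contact problem (Theorem \ref{theoregularity}), replacing the operator $\gamadi_\sigma$ by $V$. The essential analytic inputs are already available: the coercivity
$$\|\phi\|_{-\frac{1}{2},-\frac{1}{2},\sigma,\ast}^{2} \lesssim_\sigma \langle p_Q V\phi,\phi\rangle_\sigma \lesssim \|\phi\|_{\frac{1}{2},-\frac{1}{2},\sigma,\ast}^{2}$$
from \cite{cooper, haduongcrack}, the mapping properties of $V$ in Theorems \ref{mappingproperties}--\ref{mapimproved}, and the unconstrained well-posedness of the corresponding equation ($V$-analogue of Theorem \ref{sakosol}, obtained by Bamberger--Ha Duong in \cite{BamHa}). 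Since $V$ is self-adjoint and positive in the pertinent weighted pairing, \textbf{uniqueness} is immediate: for two solutions $u_1,u_2$, choosing $v=u_2$ in the inequality for $u_1$ and $v=u_1$ in the one for $u_2$ and adding yields $\langle p_Q V(u_1-u_2),u_1-u_2\rangle_\sigma\le 0$; the coercivity then forces $u_1=u_2$.

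For \textbf{existence}, I would regularize the constraint $u\geq 0$ by penalization. For $\epsilon>0$, seek $u_\epsilon\in H^{\frac{1}{2}}_\sigma(\mathbb{R}^+,\tilde{H}^{-\frac{1}{2}}(G))$ solving the equality
$$\langle p_Q V u_\epsilon, v\rangle_\sigma + \tfrac{1}{\epsilon}\langle (u_\epsilon)_-, v\rangle_\sigma \;=\; \langle h,v\rangle_\sigma \qquad \forall v\in H^{-\frac{1}{2}}_\sigma(\mathbb{R}^+,\tilde{H}^{-\frac{1}{2}}(G))\ ,$$
where $(u_\epsilon)_-:=\max\{-u_\epsilon,0\}$. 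Existence of $u_\epsilon$ is obtained by combining the $V$-analogue of Theorem \ref{sakosol} for the linear part with a Banach fixed-point argument for the Lipschitz nonlinearity $u\mapsto(u)_-$ on a sufficiently large ball. Testing with $v=u_\epsilon$ and invoking the coercivity of $V$ produces the a priori bound
$$\|u_\epsilon\|_{-\frac{1}{2},-\frac{1}{2},\sigma,\ast}^{2} + \tfrac{1}{\epsilon}\|(u_\epsilon)_-\|_{0,0,\sigma}^{2} \;\lesssim_\sigma\; \|h\|_{\tfrac{1}{2},\tfrac{1}{2},\sigma}\,\|u_\epsilon\|_{-\frac{1}{2},-\frac{1}{2},\sigma,\ast}\ ,$$
which is uniform in $\epsilon$ but at a weaker temporal regularity than required.

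To close the regularity gap and reach the target space $H^{\frac{1}{2}}_\sigma(\mathbb{R}^+,\tilde{H}^{-\frac{1}{2}}(G))$, I would test the differentiated penalized equation with $\partial_t u_\epsilon$, using the assumption $h\in H^{\frac{3}{2}}_\sigma(\mathbb{R}^+,H^{\frac{1}{2}}(G))$, and invoke the Bamberger--Ha Duong coercivity $\|\phi\|_{0,-\frac{1}{2},\sigma,\ast}^{2}\lesssim_\sigma\langle V\phi,\partial_t\phi\rangle$ recalled in Section \ref{faframework}. Monotonicity of $u\mapsto(u)_-$ ensures the penalty contribution has a favorable sign after differentiation, delivering a uniform bound of $u_\epsilon$ in $H^{\frac{1}{2}}_\sigma(\mathbb{R}^+,\tilde{H}^{-\frac{1}{2}}(G))$ together with $\|(u_\epsilon)_-\|_{0,0,\sigma}^{2}\lesssim \epsilon$. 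Extract a weakly convergent subsequence $u_\epsilon\rightharpoonup u$, so that $(u_\epsilon)_-\to 0$ strongly, hence $u\geq 0$. Passing to the limit in the penalized equation tested against $v-u_\epsilon$ with $v\geq 0$ (and using $\langle(u_\epsilon)_-,v-u_\epsilon\rangle_\sigma=\langle(u_\epsilon)_-,v+(u_\epsilon)_+\rangle_\sigma+\|(u_\epsilon)_-\|^{2}\geq 0$) yields the variational inequality \eqref{varVI} in the limit.

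The \textbf{main obstacle} is exactly the lift from the coercivity norm $\|\cdot\|_{-\frac{1}{2},-\frac{1}{2},\sigma,\ast}$ to the ambient space $H^{\frac{1}{2}}_\sigma(\mathbb{R}^+,\tilde{H}^{-\frac{1}{2}}(G))$: it is only through the differentiated formulation and the higher regularity hypothesis on $h$ that uniform bounds at the correct order in time become accessible. This step is the direct counterpart of the technical heart of Cooper's argument in the Dirichlet-to-Neumann case, and explains the regularity assumption $h\in H^{\frac{3}{2}}_\sigma(\mathbb{R}^+,H^{\frac{1}{2}}(G))$.
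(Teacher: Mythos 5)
The paper does not actually prove Theorem~\ref{theoVregularity}; its ``proof'' consists of the single line ``See \cite{cooper}, p.~456.'' So there is nothing to compare your argument against line-by-line, and what you have written is a reconstruction of Cooper's existence theorem via penalization. Your uniqueness argument is correct. The overall architecture of the existence argument (penalize, obtain $\epsilon$-uniform bounds, pass to the limit) is a sensible route, and you are right that the regularity hypothesis $h\in H^{3/2}_\sigma(\mathbb{R}^+,H^{1/2}(G))$ is precisely what lifts the a priori bound into the claimed space. Nevertheless, as written, the existence part has gaps.

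First, the coercivity you invoke is the wrong one. Testing the \emph{differentiated} penalized equation with $\partial_t u_\epsilon$ produces the pairing $\langle V\,\partial_t u_\epsilon,\partial_t u_\epsilon\rangle_\sigma$, which is controlled by the flat-geometry coercivity $\|\phi\|_{-\frac12,-\frac12,\sigma,\ast}^2\lesssim_\sigma\langle p_Q V\phi,\phi\rangle_\sigma$ (Cooper, Ha-Duong), \emph{not} by the Bamberger--Ha Duong estimate $\|\phi\|_{0,-\frac12,\sigma,\ast}^2\lesssim_\sigma\langle V\phi,\partial_t\phi\rangle$. Applying the flat-geometry estimate to $\phi=\partial_t u_\epsilon$ yields $\|\partial_t u_\epsilon\|_{-\frac12,-\frac12,\sigma,\ast}=\|u_\epsilon\|_{\frac12,-\frac12,\sigma,\ast}$, i.e.~the target $H^{1/2}_\sigma(\mathbb{R}^+,\tilde H^{-1/2}(G))$ bound. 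If you instead use the Bamberger--Ha Duong estimate as cited, you only reach $H^0_\sigma$ (undifferentiated equation, tested with $\partial_t u_\epsilon$) or overshoot to $H^1_\sigma$ (differentiated equation, tested with $\partial_t^2 u_\epsilon$); neither matches the statement.

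Second, there is a sign inconsistency: the penalized equation should read $p_Q V u_\epsilon-\tfrac1\epsilon(u_\epsilon)_-=h$ so that testing with $u_\epsilon$ and using $\langle(u_\epsilon)_-,u_\epsilon\rangle_\sigma=-\|(u_\epsilon)_-\|_{0,0,\sigma}^2$ produces the $+\tfrac1\epsilon\|(u_\epsilon)_-\|^2$ term on the left, as you claim. With the $+\tfrac1\epsilon(u_\epsilon)_-$ sign you wrote, that term appears with a minus sign and the a priori estimate fails.

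Third, the assertion that solvability of the penalized problem follows from ``a Banach fixed-point argument for the Lipschitz nonlinearity on a sufficiently large ball'' is unsound: the nonlinearity $\tfrac1\epsilon(\cdot)_-$ has Lipschitz constant $\tfrac1\epsilon$, which blows up exactly in the regime $\epsilon\to 0$ that you need. The standard way around this is to use that $u\mapsto-\tfrac1\epsilon(u)_-$ is monotone, combine it with the weak coercivity of $V$, and invoke monotone-operator theory (or a Galerkin approximation in finite dimensions followed by compactness).

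Finally, passing to the limit requires more than you indicate. The term $\langle p_Q Vu_\epsilon,u_\epsilon\rangle_\sigma$ does not converge under weak convergence alone; one needs either a Minty-type argument (possible here since $u\geq 0$ is established first from $(u_\epsilon)_-\to 0$) or compactness. And the penalization itself presupposes that $(u_\epsilon)_-$ is meaningful, which is not automatic when $u_\epsilon$ lives in the negative-order space $\tilde H^{-1/2}(G)$ in space; a preliminary Galerkin discretization with smooth basis functions, or an additional spatial mollification, is needed to make the penalty well-defined before letting $\epsilon\to 0$. These are standard fixes, but they are not small details in this setting and should be addressed explicitly.
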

\begin{proof}
See \cite{cooper}, p.~456.
\end{proof}
The corresponding discretized variational inequality reads as follows. \\
Find $u_{\Delta t,h}\in \tilde{K}_{t,h}^+$ such that:
\begin{align}
\label{discretevarVI}
  \langle p_Q {V} u_{\Delta t,h}, v_{\Delta t,h}-u_{\Delta t,h} \rangle_{\textcolor{black}{\sigma}} \geq \langle h,v_{\Delta t,h}-u_{\Delta t,h} \rangle_{\textcolor{black}{\sigma}}
\end{align}
holds for all $v_{\Delta t,h}\in \tilde{K}_{t,h}^+$.\\
\textcolor{black}{Note that unlike for the Dirichlet-to-Neumann operator, $V$ does not need to be approximated.}  The relevant a priori estimate reads:
\begin{theorem}
\label{apriori2}
 Let $h \in H^{\frac{3}{2}}_\sigma(\mathbb{R}^+,{H}^{\frac{1}{2}}(G))$, and let $u \in H^{\frac{1}{2}}_\sigma(\mathbb{R}^+,\tilde{H}^{-\frac{1}{2}}(G))^+$, $u_{\Delta t,h}\in \tilde{K}_{t,h}^+$ be the solutions of (\ref{varVI}), respectively (\ref{discretevarVI}). Then the following estimate holds:
 \begin{align}
 \|u-u_{\Delta t,h}\|_{-\frac{1}{2},-\frac{1}{2},\sigma,\star }^2 \lesssim_\sigma \inf \limits_{\phi_{\Delta t,h}\in \tilde{K}_{t,h}^+}(\textcolor{black}{\|h-p_Q V u\|_{\frac{1}{2},\frac{1}{2},\sigma}} \|u-\phi_{\Delta t,h}\|_{-\frac{1}{2},-\frac{1}{2},\sigma,\star}+ 
 \|u-\phi_{\Delta t,h}\|_{\frac{1}{2},-\frac{1}{2},\sigma,\star}^2)\ .
\end{align}  
 \end{theorem}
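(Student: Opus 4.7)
My plan is to follow the strategy of the proof of Theorem \ref{apriori1} almost verbatim, simply replacing the Dirichlet-to-Neumann operator $\gamadi_\sigma$ by the single layer operator $V$, and using the coercivity $\|\phi\|_{-\frac{1}{2},-\frac{1}{2},\sigma,\ast}^2 \lesssim_\sigma \langle p_Q V \phi,\phi\rangle_\sigma$ from \cite{cooper, haduongcrack} in place of the Dirichlet-to-Neumann coercivity from Theorem \ref{mappingProperties}. The pair $(\tilde{H}^{\frac{1}{2}}, H^{-\frac{1}{2}})$ for $\gamadi_\sigma$ becomes $(\tilde{H}^{-\frac{1}{2}}, H^{\frac{1}{2}})$ for $V$, consistent with $V$ raising spatial regularity by one. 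Conformity of the discretization (a constant function in time is allowed, and $V^0_{t,h} \subset \tilde{V}_{t,h}^{p,q}$ in spatial regularity) again permits $\phi = u_{\Delta t,h}$ as a test function in the continuous inequality, while $\phi_{\Delta t,h}$ is free.

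The first step is to rewrite \eqref{varVI} as
\begin{align*}
\langle p_Q V u, u\rangle_\sigma \leq \langle h, u-\phi\rangle_\sigma + \langle p_Q V u, \phi\rangle_\sigma,
\end{align*}
and analogously for \eqref{discretevarVI}. Applying coercivity and adding the two rearranged inequalities, exactly as in the proof of Theorem \ref{apriori1}, one obtains
\begin{align*}
\|u-u_{\Delta t,h}\|_{-\frac{1}{2},-\frac{1}{2},\sigma,\star}^2 \lesssim_\sigma \langle h-p_Q V u,u-\phi_{\Delta t,h}\rangle_\sigma + \langle p_Q V(u-u_{\Delta t,h}), u-\phi_{\Delta t,h}\rangle_\sigma,
\end{align*}
after setting $\phi=u_{\Delta t,h}$ and rewriting $\langle p_Q V u_{\Delta t,h}, \phi_{\Delta t,h} - u\rangle_\sigma$ in terms of $p_Q V(u - u_{\Delta t,h})$ and $p_Q V u$.

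The two terms are then estimated by duality. For the first, the natural pairing gives
\[
\langle h - p_Q V u, u-\phi_{\Delta t,h}\rangle_\sigma \leq \|h-p_Q V u\|_{\frac{1}{2},\frac{1}{2},\sigma}\,\|u-\phi_{\Delta t,h}\|_{-\frac{1}{2},-\frac{1}{2},\sigma,\star},
\]
yielding the first term in the claimed bound. For the second term I estimate
\[
\langle p_Q V(u-u_{\Delta t,h}), u-\phi_{\Delta t,h}\rangle_\sigma \leq \|p_Q V(u-u_{\Delta t,h})\|_{-\frac{1}{2},\frac{1}{2},\sigma}\,\|u-\phi_{\Delta t,h}\|_{\frac{1}{2},-\frac{1}{2},\sigma,\star},
\]
and then apply the continuity of $p_Q V$ from Theorem \ref{mapimproved}, which for the half-space provides the mapping sending $\|u-u_{\Delta t,h}\|_{-\frac{1}{2},-\frac{1}{2},\sigma,\star}$ into the dual norm needed, to produce the factor $\|u-u_{\Delta t,h}\|_{-\frac{1}{2},-\frac{1}{2},\sigma,\star}$. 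Young's inequality absorbs this factor into the left-hand side, leaving $\|u-\phi_{\Delta t,h}\|_{\frac{1}{2},-\frac{1}{2},\sigma,\star}^2$. Taking the infimum over $\phi_{\Delta t,h} \in \tilde{K}_{t,h}^+$ finishes the proof.

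The main delicate point is verifying that the continuity property of $V$ in the half-space setting produces exactly the norm $\|\cdot\|_{-\frac{1}{2},-\frac{1}{2},\sigma,\star}$ on $u-u_{\Delta t,h}$ after the duality step, in parallel to how the Dirichlet-to-Neumann continuity in Theorem \ref{mappingProperties} with $s=-\frac{1}{2}$ was invoked in the proof of Theorem \ref{apriori1}. Once that is in place, the remainder of the argument is a term-by-term translation of the earlier proof.
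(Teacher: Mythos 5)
Your proposal follows the paper's own one-line proof sketch faithfully: replace the Dirichlet-to-Neumann operator by $V$, use the coercivity $\|\phi\|_{-\frac{1}{2},-\frac{1}{2},\sigma,\ast}^2\lesssim_\sigma\langle p_Q V\phi,\phi\rangle_\sigma$ in place of Theorem~\ref{mappingProperties}, invoke Theorem~\ref{mapimproved} for continuity, and translate the remaining algebra verbatim, including the conforming choice $\phi=u_{\Delta t,h}$ and the duality estimate $\langle h-p_QVu,u-\phi_{\Delta t,h}\rangle_\sigma\leq\|h-p_QVu\|_{\frac{1}{2},\frac{1}{2},\sigma}\|u-\phi_{\Delta t,h}\|_{-\frac{1}{2},-\frac{1}{2},\sigma,\star}$. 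So far this matches the authors' intent.

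However, the ``main delicate point'' you flag is exactly where the literal translation breaks, and you assert rather than verify it. After the duality step on the cross term,
\[
\langle p_Q V(u-u_{\Delta t,h}), u-\phi_{\Delta t,h}\rangle_\sigma \leq \|p_Q V(u-u_{\Delta t,h})\|_{-\frac{1}{2},\frac{1}{2},\sigma}\,\|u-\phi_{\Delta t,h}\|_{\frac{1}{2},-\frac{1}{2},\sigma,\star},
\]
to produce the absorbable factor $\|u-u_{\Delta t,h}\|_{-\frac{1}{2},-\frac{1}{2},\sigma,\star}$ you need the continuity $p_Q V : H^{-\frac{1}{2}}_\sigma(\mathbb{R}^+,\tilde{H}^{-\frac{1}{2}}(G)) \to H^{-\frac{1}{2}}_\sigma(\mathbb{R}^+,H^{\frac{1}{2}}(G))$, i.e.~no loss in the temporal index, exactly as Theorem~\ref{mappingProperties} with $s=-\frac{1}{2}$ gives for $\mathcal{S}_\sigma$. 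Theorem~\ref{mapimproved} does \emph{not} provide this: with $s=-\frac{1}{2}$ and target index $r=-\frac{1}{2}$ the source space is $H^{r+\frac{1}{2}}_\sigma = H^{0}_\sigma$, i.e.~$V$ loses half a time derivative. This is not a defect of the quoted theorem but structural: in the half-space the symbol of $V$ is of order $|\Gamma|^{-1}$, and \eqref{symbolest1} only gives $|\Gamma|^{-1}\lesssim_\sigma |\xi_0+i\sigma|^{1/2}(|\xi_0+i\sigma|^2+|\xi'|^2)^{-1/2}$, with an unavoidable $|\xi_0+i\sigma|^{1/2}$ near the light cone, whereas the Dirichlet-to-Neumann symbol $\sim\Gamma$ is bounded by $(|\xi_0+i\sigma|^2+|\xi'|^2)^{1/2}$ with no extra temporal factor. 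Carrying the argument through with the mapping property that actually holds (duality with exponents $(-1,\frac{1}{2})$ and $(1,-\frac{1}{2})$, then $V:H^{-\frac{1}{2}}_\sigma(\tilde{H}^{-\frac{1}{2}})\to H^{-1}_\sigma(H^{\frac{1}{2}})$) yields $\|u-\phi_{\Delta t,h}\|_{1,-\frac{1}{2},\sigma,\star}^2$ in the second term, not $\|u-\phi_{\Delta t,h}\|_{\frac{1}{2},-\frac{1}{2},\sigma,\star}^2$. So while your proposal reproduces the paper's intended route, it inherits a genuine gap at the point you singled out; closing it requires either a sharper bilinear estimate for $p_QV$ than what Theorems~\ref{mapimproved}, \eqref{symbolest1} supply, or accepting the weaker temporal exponent $1$ in the approximation term.
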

}
\textcolor{black}{The proof proceeds analogous to the proof of Theorem \ref{apriori1}. It replaces Theorem \ref{mappingProperties} for the Dirichlet-to-Neumann operator by the mapping properties of  $V$ in the half space, Theorem \ref{mapimproved}, and  the coercivity $\|\phi\|_{-\frac{1}{2},-\frac{1}{2},\sigma,\ast}^2 \lesssim_\sigma \langle p_Q V \phi,\phi\rangle_\sigma$ noted there.}

Similarly to the contact problem, for the numerical implementation a mixed formulation of the variational inequality \eqref{varVI} proves useful. Its analysis is analogous to the contact problem.

\begin{theorem}[Mixed formulation]
Let $h\in H^{\frac{3}{2}}_\sigma(\mathbb{R}^+,{H}^{\frac{1}{2}}(G))$. The variational inequality formulation (\ref{varVI}) is equivalent to the following formulation:\\ 
 Find $(u,\lambda)\in H^{\frac{1}{2}}_\sigma(\mathbb{R}^+,\tilde{H}^{-\frac{1}{2}}(G))\times H^{\frac{1}{2}}_\sigma(\mathbb{R}^+,{H}^{\frac{1}{2}}(G))^+$ such that 
\begin{align}
\label{mixedFormulationV}
\begin{cases}
(a) ~ \langle V u, v\rangle_{\textcolor{black}{\sigma}} - \langle \lambda,v\rangle_{\textcolor{black}{\sigma}}= \langle h,v \rangle_{\textcolor{black}{\sigma}} \\
(b)~\langle u ,\mu- \lambda \rangle_{\textcolor{black}{\sigma}} \geq 0,
\end{cases}
\end{align}
for all $(v,\mu)\in H^{\frac{1}{2}}_\sigma(\mathbb{R}^+,\tilde{H}^{-\frac{1}{2}}(G))\times H^{\frac{1}{2}}_\sigma(\mathbb{R}^+,{H}^{\frac{1}{2}}(G))^+$.
\end{theorem}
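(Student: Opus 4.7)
The plan is to follow exactly the same strategy as in the proof of the mixed formulation for the contact problem earlier in this section, with the Dirichlet-to-Neumann operator $\gamadi_\sigma$ replaced by the single layer operator $V$. The algebraic structure of the argument is identical; only the underlying function spaces differ, reflecting the different mapping properties of $V$ on the half-space (Theorem \ref{mapimproved}) compared to those of $\gamadi_\sigma$ (Theorem \ref{mappingProperties}).

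First I would show that \eqref{varVI} is equivalent to the intermediate system: find $u \in H^{\frac{1}{2}}_\sigma(\mathbb{R}^+,\tilde{H}^{-\frac{1}{2}}(G))^+$ satisfying
\begin{align*}
(a')~& \langle V u, u \rangle_\sigma = \langle h, u\rangle_\sigma, \\
(b')~& \langle V u, v \rangle_\sigma \geq \langle h, v\rangle_\sigma \quad \text{for all } v \in H^{\frac{1}{2}}_\sigma(\mathbb{R}^+, \tilde{H}^{-\frac{1}{2}}(G))^+.
\end{align*}
Testing \eqref{varVI} with $v = 2u$ and $v = 0$ gives both $\langle Vu, u\rangle_\sigma \geq \langle h, u\rangle_\sigma$ and the reverse inequality, hence $(a')$. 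Adding $(a')$ to \eqref{varVI} then produces $(b')$, while subtracting $(a')$ from $(b')$ recovers \eqref{varVI}.

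Next I would prove the equivalence of $(a')$--$(b')$ with \eqref{mixedFormulationV}. For the forward direction, I set $\lambda := Vu - h$; by $(b')$, $\lambda$ is nonnegative, so it lies in the positive cone of the appropriate Sobolev space, and equation (a) of \eqref{mixedFormulationV} then holds by construction. Using $\langle u, \lambda\rangle_\sigma = 0$ from $(a')$, condition (b) reduces to $\langle u, \mu \rangle_\sigma \geq 0$, which holds since both $u$ and $\mu$ lie in the positive cone. For the converse, given a solution $(u,\lambda)$ of \eqref{mixedFormulationV}, testing (b) with $\mu = 2\lambda$ and $\mu = 0$ forces $\langle u, \lambda\rangle_\sigma = 0$; the nonnegativity $u \geq 0$ follows by contradiction exactly as in the contact problem proof, since otherwise a nonnegative test function $\mu$ with $\langle u, \mu\rangle_\sigma < 0$ would violate (b); and substituting $v - u$ with $v \geq 0$ into (a), together with $\langle \lambda, v - u\rangle_\sigma = \langle \lambda, v\rangle_\sigma \geq 0$, delivers \eqref{varVI}.

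The hard part will be the functional-analytic bookkeeping, in particular verifying that $\lambda := Vu - h$ genuinely lies in the stated positive cone $H^{\frac{1}{2}}_\sigma(\mathbb{R}^+, H^{\frac{1}{2}}(G))^+$. This is where the assumption $h \in H^{\frac{3}{2}}_\sigma(\mathbb{R}^+, H^{\frac{1}{2}}(G))$ and the regularity of $u$ guaranteed by Theorem \ref{theoVregularity}, combined with the improved mapping properties of $V$ on the half-space, have to be used carefully. Aside from this point, every step is a direct translation of the corresponding manipulation in the contact problem proof.
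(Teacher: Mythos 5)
Your proposal is correct and follows exactly the route the paper intends: the paper does not give a separate proof here, but explicitly states that "its analysis is analogous to the contact problem," and you carry out precisely that analogy, replacing $\gamadi_\sigma$ by $V$ in each step of the earlier mixed-formulation argument. Your closing caveat about verifying $\lambda = Vu - h \in H^{\frac{1}{2}}_\sigma(\mathbb{R}^+,{H}^{\frac{1}{2}}(G))^+$ is well taken — the improved mapping property of Theorem \ref{mapimproved} trades half an order of time regularity for one order in space, so matching the stated time regularity of $\lambda$ requires a little more care than in the contact case — but that point is equally implicit in the paper's "analogous" claim and does not represent a deviation from its approach.
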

The discrete formulation reads as follows:\\ 
Find $(u_{\Delta t_1,h_1}, \lambda_{\Delta t_2,h_2}) \in {V}_{ t_1,h_1}^{1,1} \times (V_{ t_2,h_2}^{1,1})^+$ such that 
\begin{align}\label{discMixedFormulationV}\begin{cases} 
(a) ~ \langle V u_{\Delta t_1,h_1}, v_{\Delta t_1,h_1}\rangle_{\textcolor{black}{\sigma}} - \langle \lambda_{\Delta t_2,h_2} ,v_{\Delta t_1,h_1}\rangle_{\textcolor{black}{\sigma}}= \langle h,v_{\Delta t_1,h_1} \rangle_{\textcolor{black}{\sigma}} \\
(b)~\langle u_{\Delta t_1,h_1} ,\mu_{\Delta t_2,h_2}- \lambda_{\Delta t_2,h_2} \rangle_{\textcolor{black}{\sigma}} \geq 0
\end{cases}
\end{align}
holds for all $(v_{\Delta t_1,h_1},\mu_{\Delta t_2,h_2})\in {V}_{ t_1,h_1}^{1,1} \times (V_{ t_2,h_2}^{1,1})^+$.\\

\textcolor{black}{The proof of the following a priori estimate follows the proof of Theorem \ref{mixedtheorem}.
\begin{theorem}\label{mixedVthm}
The mixed formulation \eqref{mixedFormulationV}  and the discrete mixed formulation \eqref{discMixedFormulationV} admit unique solutions $(u, \lambda)$, respectively $(u_{\Delta t_1,h_1}, \lambda_{\Delta t_2,h_2})$. The following a priori estimates hold: 
\begin{align}
\| \lambda -\lambda_{\Delta t_2,h_2} \|_{0,\frac{1}{2}, \sigma}  &\lesssim  \inf \limits_{\tilde{\lambda}_{\Delta t_2,h_2}} \| \lambda - \tilde{\lambda}_{\Delta t_2,h_2} \|_{0,\frac{1}{2},\sigma} +(\Delta t_1)^{-\frac{1}{2}}  \|u- u_{\Delta t_1,h_1}\|_{-\frac{1}{2},-\frac{1}{2},\sigma, \ast} \ ,\\
\|u-u_{\Delta t_1,h_1}\|_{-\frac{1}{2},-\frac{1}{2},\sigma, \ast} &\lesssim_\sigma \inf \limits_{v_{\Delta t_1,h_1}}
\|u-v_{\Delta t_1,h_1}\|_{\frac{1}{2},-\frac{1}{2},\sigma, \ast}\nonumber \\& \qquad
+ \inf \limits_{\tilde{\lambda}_{\Delta t_2,h_2}}\left\{\|\tilde{\lambda}_{\Delta t_2,h_2} - \lambda\|_{\frac{1}{2},\frac{1}{2},\sigma} +\|\tilde{\lambda}_{\Delta t_2,h_2} - {\lambda}_{\Delta t_2,h_2}\|_{\frac{1}{2},\frac{1}{2},\sigma}\right\} \ . 
\end{align}
\end{theorem}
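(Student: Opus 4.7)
The plan is to mirror the proof of Theorem \ref{mixedtheorem} step by step, with the Dirichlet-to-Neumann operator $\gamadi_\sigma$ replaced by the single layer operator $V$ and with the Sobolev norms adjusted accordingly. Concretely, the spatial duality pairing becomes $\tilde{H}^{-\frac{1}{2}}(G) \times H^{\frac{1}{2}}(G)$ instead of $\tilde{H}^{\frac{1}{2}}(G) \times H^{-\frac{1}{2}}(G)$, which explains the shifted regularity indices in the statement. Existence and uniqueness of $(u_{\Delta t_1,h_1}, \lambda_{\Delta t_2,h_2})$ follow from the positive definiteness of the discretization of $V$, which in turn is a consequence of the coercivity $\|\phi\|_{-\frac{1}{2},-\frac{1}{2},\sigma,\ast}^2 \lesssim_\sigma \langle p_Q V \phi, \phi\rangle_\sigma$ recalled from \cite{cooper, haduongcrack}, together with standard saddle point theory.

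For the estimate on $\lambda - \lambda_{\Delta t_2,h_2}$, the key ingredient is a discrete inf-sup condition of the form
\begin{equation*}
\sup_{\mu_{\Delta t_1,h_1}} \frac{\langle \mu_{\Delta t_1,h_1}, \lambda_{\Delta t_2,h_2}\rangle_\sigma}{\|\mu_{\Delta t_1,h_1}\|_{0,-\frac{1}{2},\sigma,\ast}} \gtrsim \|\lambda_{\Delta t_2,h_2}\|_{0,\frac{1}{2},\sigma}
\end{equation*}
under an assumption that $\max\{h_1,\Delta t_1\}/\min\{h_2,\Delta t_2\}$ is sufficiently small. The proof follows the pattern of Theorem \ref{discInfSup}, but now $z$ is chosen to solve a wave equation with Dirichlet data $\lambda_{\Delta t_2,h_2}$ on $\mathbb{R}^+ \times G$, so that the coercivity of the \emph{Dirichlet-to-Neumann} operator $\gamadi_\sigma$ from Theorem \ref{mappingProperties} replaces the coercivity of the Neumann-to-Dirichlet operator used before, and its normal derivative $\partial_\nu z$ is used to test against $\lambda_{\Delta t_2,h_2}$. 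An approximation $\hat{\mu}_{\Delta t_1,h_1}$ of $z$ and an inverse estimate on $\lambda_{\Delta t_2,h_2}$ give the required lower bound. The a priori estimate on $\lambda - \lambda_{\Delta t_2,h_2}$ then follows by reproducing the identity
\begin{equation*}
\langle \lambda_{\Delta t_2,h_2} - \tilde{\lambda}_{\Delta t_2,h_2}, v_{\Delta t_1,h_1}\rangle_\sigma = \langle V(u_{\Delta t_1,h_1} - u), v_{\Delta t_1,h_1}\rangle_\sigma + \langle \lambda - \tilde{\lambda}_{\Delta t_2,h_2}, v_{\Delta t_1,h_1}\rangle_\sigma
\end{equation*}
from \eqref{mixedFormulationV}(a) and \eqref{discMixedFormulationV}(a), applying the inf-sup bound, and estimating the two terms on the right by duality. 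The continuity of $V$ from Theorem \ref{mapimproved} (specialized to $s = -\frac{1}{2}$, $r = -\frac{1}{2}$) yields $\|V(u_{\Delta t_1,h_1} - u)\|_{-\frac{1}{2},\frac{1}{2},\sigma} \lesssim \|u - u_{\Delta t_1,h_1}\|_{-\frac{1}{2},-\frac{1}{2},\sigma,\ast}$, and an inverse inequality in time absorbs the temporal regularity gap, producing the factor $(\Delta t_1)^{-\frac{1}{2}}$.

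For the estimate on $u - u_{\Delta t_1,h_1}$, we combine Galerkin orthogonality (obtained by subtracting \eqref{discMixedFormulationV}(a) from \eqref{mixedFormulationV}(a))
\begin{equation*}
\langle V(u - u_{\Delta t_1,h_1}), v_{\Delta t_1,h_1}\rangle_\sigma = \langle \lambda - \lambda_{\Delta t_2,h_2}, v_{\Delta t_1,h_1}\rangle_\sigma
\end{equation*}
with the coercivity of $V$. Writing $u - u_{\Delta t_1,h_1} = (u - v_{\Delta t_1,h_1}) + (v_{\Delta t_1,h_1} - u_{\Delta t_1,h_1})$, testing with $v_{\Delta t_1,h_1} - u_{\Delta t_1,h_1}$, and splitting $\lambda - \lambda_{\Delta t_2,h_2} = (\lambda - \tilde{\lambda}_{\Delta t_2,h_2}) + (\tilde{\lambda}_{\Delta t_2,h_2} - \lambda_{\Delta t_2,h_2})$ for arbitrary $\tilde{\lambda}_{\Delta t_2,h_2}$ leads, via the continuity of $V$ and the duality pairing, to the stated bound after dividing by $\|u_{\Delta t_1,h_1} - v_{\Delta t_1,h_1}\|_{-\frac{1}{2},-\frac{1}{2},\sigma,\ast}$ and applying the triangle inequality. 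The main obstacle in the entire argument is verifying that the inf-sup constant is uniform under mesh refinement in the shifted norm pairing $\tilde{H}^{-\frac{1}{2}}(G) \times H^{\frac{1}{2}}(G)$; once this is in place, all remaining steps are direct analogs of those in Theorem \ref{mixedtheorem}.
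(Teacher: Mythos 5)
Your proposal matches the paper's approach: the paper proves Theorem \ref{mixedVthm} only through the remark that ``the proof \dots follows the proof of Theorem \ref{mixedtheorem}'', so your systematic transposition---replacing $\gamadi_\sigma$ by $V$, interchanging the roles of $\tilde{H}^{\frac{1}{2}}(G)$ and $H^{-\frac{1}{2}}(G)$ in the pairings, and using the analog of the inf-sup condition of Theorem \ref{discInfSup}---is precisely what is intended.

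Two details in your sketch deserve care. First, in the inf-sup argument you let $z$ solve the wave equation with \emph{Dirichlet} data $\lambda_{\Delta t_2,h_2}$ on $\mathbb{R}^+\times G$ and test with its normal derivative $\partial_\nu z$, but you then write that $\hat{\mu}_{\Delta t_1,h_1}$ approximates $z$. Since the supremum runs over $\mu_{\Delta t_1,h_1}$ in the same discrete space as $u_{\Delta t_1,h_1}$ (valued in $\tilde{H}^{-\frac{1}{2}}(G)$), $\hat{\mu}_{\Delta t_1,h_1}$ must be a discrete approximation of $\partial_\nu z$, not of $z$. Second, Theorem \ref{mapimproved} with $r=s=-\frac{1}{2}$ yields $\|V\phi\|_{-\frac{1}{2},\frac{1}{2},\sigma}\lesssim\|\phi\|_{0,-\frac{1}{2},\sigma,\ast}$; the single layer operator loses half a derivative in time, unlike $\gamadi_\sigma$ in Theorem \ref{mappingProperties}, which maps $H^s_\sigma$ to $H^s_\sigma$ without loss. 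Your stated bound $\|V(u-u_{\Delta t_1,h_1})\|_{-\frac{1}{2},\frac{1}{2},\sigma}\lesssim\|u-u_{\Delta t_1,h_1}\|_{-\frac{1}{2},-\frac{1}{2},\sigma,\ast}$ therefore does not follow directly as cited; the time indices in the duality pairing and the inverse inequality on $v_{\Delta t_1,h_1}$ must be rebalanced consistently with Theorem \ref{mapimproved} in order to arrive at the factor $(\Delta t_1)^{-\frac{1}{2}}$ in the statement.
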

}
\section{Algorithmic considerations}
\label{algo}
\subsection{Marching-on-in-time scheme for the variational equality}
\label{MOT}

A key step towards the solution of the variational inequality is to discretize the Dirichlet-to-Neumann operator and solve equations involving it. In the time-independent case the symmetric representation $\mathcal{S} = \textcolor{black}{\frac{1}{2}}(W-(1-K')V^{-1}(1-K))$ of the Dirichlet-to-Neumann operator  in terms of layer potentials is well-established and studied for contact and coupling problems, see e.g.~\cite{banz, eck, ency}. \textcolor{black}{Also non-symmetric representations of the Dirichlet-to-Neumann operator are of interest, such as the Johnson-Nedelec coupling \cite{ency}; they will be investigated in future work. The implementation of the non-symmetric coupling is simpler as it does not require the hypersingular operator $W$. However,  it  has only recently been analyzed in the time-independent case \cite{sayas0}, in special situations, and not for contact. See also \cite{ajrt, banjai, graded} for preliminary results about the Dirichlet-to-Neumann operator for the wave equation.}

\textcolor{black}{We use the symmetric representation for the  Dirichlet-to-Neumann operator  in the time-dependent case and equivalently formulate the Dirichlet-to-Neumann equation as  follows:}
\\ For given $h\in H^{\frac{3}{2}}_\sigma(\mathbb{R}^+,{H}^{-\frac{1}{2}}(G)) $, find $u\in H^{\frac{1}{2}}_\sigma(\mathbb{R}^+,\tilde{H}^{\frac{1}{2}}(G)) ,v\in {H}^{\frac{1}{2}}_\sigma(\mathbb{R}^+,\tilde{H}^{-\frac{1}{2}}(G)) $ such that
\begin{align}
\timeint \langle Wu - \textcolor{black}{(1-K')}v ,{\phi}\rangle_G \dt =\timeint \langle h,{\phi}\rangle_G\dt\ ,
\\ \timeint [\langle Vv,{\Psi}\rangle_G  - \textcolor{black}{\langle(1-K)}u, {\Psi} \rangle_G] \dt =0,
\end{align}
holds for all $\phi \in {H}^{\frac{1}{2}}_\sigma(\mathbb{R}^+,\tilde{H}^{\frac{1}{2}}(G)) $,$\Psi \in {H}^{\frac{1}{2}}_\sigma(\mathbb{R}^+,\tilde{H}^{-\frac{1}{2}}(G)) $.\\ 
\textcolor{black}{Here $\langle \cdot, \cdot \rangle_G$ denotes the inner product of $L^2(G)$. For the discretization, we use an established marching-in-on-time scheme \cite{mot, terrasse} which allows the solution of the space-time Galerkin system by a time-stepping procedure. To derive it,} let ${u_{\Delta t,h}=\sum \limits_{m,i} c_i^m\beta_{\Delta t}^m(t) \xi_h^i(x,y)} \in \tilde{V}^{1,1}_{t,h}$, $v_{\Delta t,h}=\sum \limits_{m,i} d_i^m \beta_{\Delta t}^m(t) \xi_h^i(x,y) \in V^{1,1}_{t,h}$ be  ansatz functions which are linear in both space and time. To obtain a stable marching-in-on-time scheme we test the first equation against constant test functions in time and the second equation against the time derivative of constant test functions. We let $\phi_{\Delta t,h}^{ij}:= \gamma_{[t_{i-1},t_i]}(t)\xi_h^j(x,y)=: \gamma^i(t)\xi^j(x,y)$, $\dot{\Psi}_{\Delta t,h}^{ij}= \dot{\gamma}^i(t)\xi^j(x,y)$ be test functions that are constant in time and linear in space. 
Thus, the discrete system reads as follows:
\begin{align} \label{discdirneu1}
\timeint \langle Wu_{\Delta t,h} - \textcolor{black}{(1-K')}v_{\Delta t,h} ,{\gamma}^n\xi^j\rangle_G  \dt =\timeint \langle h,{\gamma}^n\xi^j \rangle_G \dt \ ,
\\ \timeint [\langle Vv_{\Delta t,h},\dot{\gamma}^n\xi^j\rangle_G  - \langle \textcolor{black}{(1-K)}u_{\Delta t,h}, \dot{\gamma}^n\xi^j \rangle_G ] \dt =0, \label{discdirneu2}
\end{align}
for all $n=1\ldots,N_t,j=1,\ldots N_s$.

Setting $I^j = I$, if $j=0,1$, $I^j=0$ otherwise and $\hat{I}^0=(-I)$, $\hat{I}^1=I$, $\hat{I}^j=0$ otherwise, {where $I$ is the mass matrix, } we may rewrite the system as\\ 
\begin{align*}
\mathcal{M}^j := \begin{pmatrix}
 W^j && (K^j)'- \frac{\Delta t}{2}I^j\\
 K^j -\hat{I}^j && V^j
 \end{pmatrix},
\end{align*}
for all $j=2,\ldots,N_t$ {(see \cite{gimperleintyre} for further details)}. From the convolution structure in time, we obtain:  For arbitrary $n\in \{1,\ldots,N_t\}$:
\begin{align}
\sum \limits_{m=1}^\infty \mathcal{M}^{n-m}\begin{pmatrix} c^m \\ d^m \end{pmatrix}
= \begin{pmatrix}
\frac{\Delta t}{2}I(h^{n-1}+h^n) \\0
\end{pmatrix}.
\end{align}
By causality, $\mathcal{M}^{j} = 0$ when $j<0$, so that the sum on the left hand side ends at $m=n$. This results in the time stepping scheme:  
\begin{align}\label{MOTcompact}
&\mathcal{M}^0 \begin{pmatrix} c^n \\ d^n \end{pmatrix} = \begin{pmatrix} 
\frac{\Delta t}{2}I(h^{n-1}+h^n)  \\0 
\end{pmatrix} - \sum \limits_{m=1}^{n-1} \mathcal{M}^{n-m} \begin{pmatrix} c^m \\ d^m \end{pmatrix} .
\end{align}
Hence, if we save the matrices from previous time steps, we only need  to calculate one new matrix $\mathcal{M}^{n-1}$ in time step $n$ to obtain the vector $[c^n ,d^n ]^T$.

\subsection{Space-time Uzawa algorithm}

{The solution of the discrete mixed formulation \eqref{discMixedFormulation} may be computed using a Uzawa algorithm in space-time.}

\begin{algorithm}[H]
\caption{Space-time Uzawa algorithm}
\label{alg1}
\begin{algorithmic}
\STATE choose $\rho>0$:
\STATE $k=0:$  $y^{(0)}= \vec{0}$
\WHILE{stopping criterion not satisfied}
\STATE \textbf{solve:} $\mathcal{S}x^{(k)}= h+y^{(k)} $
\STATE \textbf{compute:} $y^{(k+1)}= \text{Pr}_K  (y^{(k)}  - \rho x^{(k)})$, where ($\text{Pr}_K y)_i = \max\{y_i,0\}$
\STATE $k \leftarrow k+1$
\ENDWHILE
\end{algorithmic}
\end{algorithm}

\begin{lemma}
{The space-time Uzawa algorithm converges, provided that $0<\rho< 2C_\sigma$. Here $C_\sigma$ is the coercivity constant in Theorem \ref{mappingProperties}.}
\end{lemma}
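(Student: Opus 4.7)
The plan is to follow the classical Uzawa convergence argument, identifying the iteration as the composition of a linear solve and a nonexpansive projection, and using the coercivity of $\mathcal{S}_\sigma$ to extract a contraction factor. Let $(u,\lambda)$ denote the unique solution of the discrete mixed system \eqref{discMixedFormulation}. The Karush--Kuhn--Tucker conditions, equivalent to (\ref{discMixedFormulation}a,b), imply that for every $\rho>0$ the exact solution is a fixed point of the iteration:
\begin{equation*}
\mathcal{S}_\sigma u = h + \lambda, \qquad \lambda = \mathrm{Pr}_K(\lambda - \rho\, u)\,.
\end{equation*}
The second identity uses the standard characterization of $\lambda$ as the unique point in $K$ satisfying $\langle u, \mu - \lambda\rangle_\sigma \geq 0$ for all $\mu \in K$.

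Introducing the iteration errors $e_x^{(k)} := x^{(k)} - u$ and $e_y^{(k)} := y^{(k)} - \lambda$, subtracting the fixed-point identities from the updates gives $\mathcal{S}_\sigma e_x^{(k)} = e_y^{(k)}$ and $e_y^{(k+1)} = \mathrm{Pr}_K(y^{(k)} - \rho x^{(k)}) - \mathrm{Pr}_K(\lambda - \rho\, u)$. The first main step is to apply the nonexpansiveness of the projection $\mathrm{Pr}_K$ onto the convex cone of nonnegative vectors, yielding
\begin{equation*}
\|e_y^{(k+1)}\|^2 \leq \|e_y^{(k)} - \rho\, e_x^{(k)}\|^2 = \|e_y^{(k)}\|^2 - 2\rho\, \langle e_y^{(k)}, e_x^{(k)}\rangle_\sigma + \rho^2 \|e_x^{(k)}\|^2\,.
\end{equation*}
The second main step is to combine $\mathcal{S}_\sigma e_x^{(k)} = e_y^{(k)}$ with the coercivity estimate $\langle \mathcal{S}_\sigma \phi, \phi\rangle_\sigma \geq C_\sigma \|\phi\|^2$ from Theorem \ref{mappingProperties} to obtain
\begin{equation*}
\langle e_y^{(k)}, e_x^{(k)}\rangle_\sigma = \langle \mathcal{S}_\sigma e_x^{(k)}, e_x^{(k)}\rangle_\sigma \geq C_\sigma \|e_x^{(k)}\|^2\,,
\end{equation*}
and hence the recursion $\|e_y^{(k+1)}\|^2 \leq \|e_y^{(k)}\|^2 - \rho(2 C_\sigma - \rho)\, \|e_x^{(k)}\|^2$.

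For any $\rho \in (0, 2 C_\sigma)$, the factor $\rho(2 C_\sigma - \rho)$ is strictly positive, so the sequence $\{\|e_y^{(k)}\|^2\}$ is monotonically decreasing and nonnegative, and a telescoping sum gives $\sum_{k=0}^\infty \|e_x^{(k)}\|^2 < \infty$. Therefore $e_x^{(k)} \to 0$, and the continuity of $\mathcal{S}_\sigma$ applied to $e_y^{(k)} = \mathcal{S}_\sigma e_x^{(k)}$ shows that $e_y^{(k)}\to 0$ as well, proving convergence of both iterates to $(u,\lambda)$.

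The main technical obstacle is to reconcile the choice of inner product between the two ingredients: the coercivity of Theorem \ref{mappingProperties} is expressed in the anisotropic Sobolev norm $\|\cdot\|_{-\frac{1}{2},\frac{1}{2},\sigma,\ast}$, while the projection $\mathrm{Pr}_K$ in the algorithm is nonexpansive only with respect to the inner product that realizes the componentwise ordering on the discrete space. In the finite-dimensional setting the two norms are equivalent, but verifying that the recursion above can be carried out in a \emph{single} inner product (so that both the nonexpansiveness and the quantitative coercivity constant $C_\sigma$ apply simultaneously) is the step that requires care, and will in practice fix the admissible range of $\rho$.
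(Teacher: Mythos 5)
Your proposal reproduces the paper's Uzawa argument step for step: subtract the fixed-point relations, apply nonexpansiveness of $\mathrm{Pr}_K$, substitute the coercivity of $\mathcal{S}_\sigma$, and telescope. The one place where you diverge is precisely the place you flag yourself as ``the step that requires care'': you never reconcile the norm in which the projection is nonexpansive (the weighted $L^2$ norm $\|\cdot\|_{0,0,\sigma}$) with the norm in which Theorem \ref{mappingProperties} gives coercivity ($\|\cdot\|_{-\frac{1}{2},\frac{1}{2},\sigma,\ast}$). Appealing to finite-dimensional equivalence of norms, as you suggest, is the wrong way out here: the equivalence constant is mesh-dependent, so the admissible range of $\rho$ would shrink under refinement and the clean bound $0<\rho<2C_\sigma$ would be lost. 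The paper closes the gap with a single, mesh-independent observation: $\|\phi\|_{-\frac{1}{2},\frac{1}{2},\sigma,\ast}\geq\|\phi\|_{0,0,\sigma}$, because the Fourier multiplier $|\omega|^{-1}(|\omega|^2+|\xi|^2)^{1/2}\geq 1$. Thus
\begin{equation*}
\langle e_y^{(k)}, e_x^{(k)}\rangle_\sigma = \langle \mathcal{S}_\sigma e_x^{(k)}, e_x^{(k)}\rangle_\sigma \geq C_\sigma \|e_x^{(k)}\|_{-\frac{1}{2},\frac{1}{2},\sigma,\ast}^2 \geq C_\sigma \|e_x^{(k)}\|_{0,0,\sigma}^2 \ ,
\end{equation*}
so the coercivity constant transfers to the $L^2$ norm appearing in the recursion and the range $(0,2C_\sigma)$ follows exactly as stated. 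Insert this one line and your proof is complete and matches the paper. One further remark: the paper stops at $\|e_x^{(k)}\|_{0,0,\sigma}\to 0$; your additional conclusion $e_y^{(k)}\to 0$ via continuity of $\mathcal{S}_\sigma$ does rely on finite-dimensional norm equivalence (since $\mathcal{S}_\sigma$ is not $L^2$-to-$L^2$ bounded), which is harmless at that stage because no quantitative constant is needed, but it should be stated as such.
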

\begin{proof}
From the algorithm and the contraction property of the projection \textcolor{black}{$\textrm{Pr}_K \mu_{\Delta t,h} =  \max\{\mu_{\Delta t,h},0\}$} ,
\begin{align*}
\|\lambda_{\Delta t,h}^{(k+1)} - \lambda_{\Delta t,h}\|_{0,0,\sigma}^2 &= \|\text{Pr}_K\ (\lambda_{\Delta t,h}^{(k)} -\rho u_{\Delta t,h}^{(k)}) - \text{Pr}_K\ (\lambda_{\Delta t,h} -\rho u_{\Delta t,h})\|_{0,0,\sigma}^2\\
& \leq \|\lambda_{\Delta t,h}^{(k)} -\lambda_{\Delta t,h} -\rho ( u_{\Delta t,h}^{(k)} - u_{\Delta t,h})\|_{0,0,\sigma}^2\\
& = \|\lambda_{\Delta t,h}^{(k)} -\lambda_{\Delta t,h}\|_{0,0,\sigma}^2 - 2 \rho \langle \lambda_{\Delta t,h}^{(k)} -\lambda_{\Delta t,h} , u_{\Delta t,h}^{(k)} - u_{\Delta t,h}\rangle_{\textcolor{black}{\sigma}} + \rho^2 \|u_{\Delta t,h}^{(k)} - u_{\Delta t,h}\|^2_{0,0,\sigma}\ .
\end{align*}
We conclude that 
\begin{align*}
\|\lambda_{\Delta t,h}^{(k)} - \lambda_{\Delta t,h}\|_{0,0,\sigma}^2 - \|\lambda_{\Delta t,h}^{(k+1)} - \lambda_{\Delta t,h}\|_{0,0,\sigma}^2 & \geq 2 \rho \langle  \lambda_{\Delta t,h}^{(k)} -\lambda_{\Delta t,h}, u_{\Delta t,h}^{(k)} - u_{\Delta t,h}\rangle_{\textcolor{black}{\sigma}} - \rho^2 \|u_{\Delta t,h}^{(k)} - u_{\Delta t,h}\|^2_{0,0,\sigma}\ .
\end{align*}
Further note that 
\begin{align*}
\langle \lambda_{\Delta t,h}^{(k)} -\lambda_{\Delta t,h} , u_{\Delta t,h}^{(k)} - u_{\Delta t,h}\rangle_{\textcolor{black}{\sigma}}  &= \langle \gamadi_\sigma(u_{\Delta t,h}^{(k)} - u_{\Delta t,h}) , u_{\Delta t,h}^{(k)} - u_{\Delta t,h}\rangle_{\textcolor{black}{\sigma}}\\
&\geq C_\sigma \|u_{\Delta t,h}^{(k)} - u_{\Delta t,h}\|_{-\frac{1}{2}, \frac{1}{2},\sigma, \ast}^2\ .
\end{align*}

As $\|u_{\Delta t,h}^{(k)} - u_{\Delta t,h}\|_{-\frac{1}{2}, \frac{1}{2}, \sigma,\ast} \geq \|u_{\Delta t,h}^{(k)} - u_{\Delta t,h}\|_{0, 0,\sigma}$, we conclude that $$\|\lambda_{\Delta t,h}^{(k)} - \lambda_{\Delta t,h}\|_{0,0,\sigma}^2 - \|\lambda_{\Delta t,h}^{(k+1)} - \lambda_{\Delta t,h}\|_{0,0,\sigma}^2 \geq (2 \rho C_\sigma - \rho^2) \|u_{\Delta t,h}^{(k)} - u_{\Delta t,h}\|^2_{0,0,\sigma}\ .$$ The right hand side is non-negative
provided $0<\rho<2C_\sigma$. We conclude that $\|\lambda_{\Delta t,h}^{(k)} - \lambda_{\Delta t,h}\|_{0,0,\sigma}$ is a decreasing sequence. As $\|\lambda_{\Delta t,h}^{(k)} - \lambda_{\Delta t,h}\|_{0,0,\sigma}\geq 0$, it therefore converges, and it follows that $ \|u_{\Delta t,h}^{(k)} - u_{\Delta t,h}\|_{0,0,\sigma} \to 0$.  
\end{proof}

{Practically, we choose the test functions of the mixed discretized formulation \eqref{discMixedFormulation} to be piecewise constant in time to obtain a marching-on-in-time scheme.} In this case, computing the coefficients $x^{(k)}=(\{x_i^m\}_{i,m=1}^{N_s,N_t})^{(k)}$ corresponds to solving the equality for the Dirichlet-to-Neumann operator in Section \ref{MOT}, with an augmented right hand side:
 \begin{align}\label{nonlinS}
 \langle \mathcal{S} \big(\sum \limits_{m,i} (x_i^m)^k\big), \gamma^n \phi^j \rangle = \langle \sum \limits_{m,i} (y_i^m)^{k-1} \beta^m \xi^i, \gamma^n \phi^j \rangle + \langle \sum \limits_{m,i} h_i^m \beta^m \xi^i, \gamma^n \phi^j \rangle ,
 \end{align}
 for all $j=1,\ldots N_s, n=1,\ldots, N_t $. Comparing this system with the system (\ref{MOTcompact}) we observe that (\ref{nonlinS}) may be written as:
 \begin{align}
 \label{stuzawa}
 \sum \limits_{m=1}^n \mathcal{M}^{n-m} \begin{pmatrix}
 (x^m)^k \\ (d^m)^k 
 \end{pmatrix} = \begin{pmatrix}
 \frac{\Delta t}{2}I(h^{n-1}+h^n) \\ 0
 \end{pmatrix} + \begin{pmatrix}
 \frac{\Delta t}{2}I((y^{n-1})^{k-1} + (y^n)^{k-1}) \\ 0
 \end{pmatrix},
 \end{align}
 for $n=1,\ldots,N_t$.
 \textcolor{black}{
 \begin{remark}For a Lagrange multiplier that is constant in time, i.e. $\lambda_{\Delta t,h}=\sum \limits_{m,l} y_l^m \gamma^m(t)\xi^l(x)$, the second term on the right hand side of (\ref{stuzawa}) becomes $\Delta t \begin{pmatrix}
 I (y^n)^{k-1} \\ 0
 \end{pmatrix}$. This will be relevant in the following section.
 \end{remark}
 }

 \subsection{Time-step Uzawa algorithm}
 The space-time Uzawa algorithm solves the whole space-time system in every Uzawa iteration. While it is provably convergent, a time-stepping Uzawa algorithm turns out to be more efficient and will be useful for future adaptive computations. \\
A crucial observation to derive a time-stepping algorithm is that $L^2(Q)$ continuously embeds in $H^{\frac{1}{2}}_\sigma(\mathbb{R}^+,{H}^{-\frac{1}{2}}(G))$. One may therefore use piecewise constant ansatz functions in time, $\lambda_{\Delta t,h}=\sum \limits_{m,i} y_i^m \gamma^m(t)\phi^i(x)\in L^2(Q)\subset H^{\frac{1}{2}}_\sigma(\mathbb{R}^+,{H}^{-\frac{1}{2}}(G))$, for the Lagrange multiplier.

Set
 \begin{align*}
 \tilde{I}_{i,j} = \int \limits_\Gamma \phi^i \phi^j ds_x\ , \quad {I}_{i,j} = \int \limits_\Gamma \xi^i \phi^j ds_x\ ,
 \end{align*}
 and note that $\sum \limits_{m,i} y_i^m \int \limits_0^{\infty} \int \limits_\Gamma  \gamma^m \phi^i \gamma^n \phi^j ds_x \dt=\Delta t \sum \limits_{i} y_i^n \int \limits_\Gamma \phi^i \phi^j ds_x$.
The resulting space-time variational inequality reads:
 \begin{align}
   \begin{pmatrix} \label{spacetimesystem}
\mathcal{M}^0 & &  \\
\mathcal{M}^1 & \mathcal{M}^0 & \\
\vdots & &  \\
\mathcal{M}^{N_t} & \cdots & \mathcal{M}^0
\end{pmatrix} \begin{pmatrix}
[c^1, d^1]^t\\ 
[c^2, d^2]^t \\ 
\vdots \\ 
[c^{N_t}, d^{N_t}]^t
\end{pmatrix}- \Delta t
\begin{pmatrix}
[\tilde{I} y^1, 0]^t\\
[\tilde{I} y^2 ,0]^t \\
\vdots \\
[\tilde{I} y^{N_t}, 0]^t
\end{pmatrix}
= \frac{\Delta t}{2}
\begin{pmatrix}
[I(h^0+h^1), 0]^t\\
[I(h^1+h^2), 0]^t\\
\vdots \\
[I(h^{N_t-1}+h^{N_t}), 0]^t
\end{pmatrix} \ ,
\\ \langle \sum \limits_{m,i} c_i^m \beta^m \xi^i, \mu_{\Delta t,h} - \sum \limits_{m,i} y_i^m \gamma^m \phi^i \rangle \geq 0,\quad \forall \mu_{\Delta t,h}\in (V_{ t,h}^{0,0})^+\ .
\label{stvar}
 \end{align}
With $\mu_{\Delta t,h}=\sum \limits_{m,i} \mu_i^m \gamma^m(t)\phi^i(x)$, we first consider the space-time variational inequality (\ref{stvar}) for a fixed $n=1\ldots,N_t$:
\begin{align*}
 \langle \sum \limits_{m,i} c_i^m \beta^m \xi^i, \sum \limits_{j}(\mu_j^n- y_j^n) \gamma^n \phi^j \rangle &= \frac{\Delta t}{2} \langle \sum \limits_i c_i^n \xi^i,\sum \limits_{j}(\mu_j^n- y_j^n) \phi^j\rangle + \frac{\Delta t}{2}\langle \sum \limits_i c_i^{n-1} \xi^i,\sum \limits_{j}(\mu_j^{n}- y_j^{n})\phi^j \rangle\ , \\
 &= (c^n)^\top \hat{I}(\mu^n-y^n) + (c^{n-1})^\top \hat{I}(\mu^{n}-y^{n}).
\end{align*}
Here we used
\begin{align*}\langle \beta^m \xi^i,\gamma^n \phi^j \rangle=
\int \limits_\Gamma \xi^i(x)\phi^j(x) \int \limits_0^\infty \beta^m(t) \gamma^n(t) dt\ ds_x & = \int \limits_\Gamma \xi^i(x)\phi^j(x) \frac{\Delta t}{2}[\delta_{n,m}+ \delta_{n-1,m}] ds_x \ ,
\end{align*} with $\delta_{n,m}=1$ if $n=m$ and $=0$ otherwise. Also $\hat{I}_{ij}= \frac{\Delta t}{2}I_{ij}$. Therefore (\ref{stvar}) may be written as
\begin{align*}
 (c^1)^\top \hat{I} (\mu^1-y^1) + (c^0)^\top \hat{I} (\mu^1-y^1) &+ (c^2)^\top \hat{I}(\mu^2-y^2) + (c^1)^\top \hat{I} (\mu^2-y^2) + \ldots \\ &+(c^{N_t})^\top \hat{I} (\mu^{N_t}-y^{N_t}) + (c^{N_t-1})^\top \hat{I} (\mu^{N_t}-y^{N_t}) \geq 0 ~\forall \mu^j \quad~\forall j\ ,
\end{align*}
with $c^0=0$. Setting $\mu= (y^1\ldots,y^{k-1},\mu',y^{k+1},\ldots,y^{N_t})$ for $k=1,\ldots,N_t$ yields
\begin{align*}
 (c^{k})^\top \hat{I}(\mu'-y^k)+ (c^{k-1})^\top \hat{I}(\mu'-y^k) \geq 0, \quad~\forall \mu'\ ,
\end{align*}
and we see that the solution to the space-time variational inequality satisfies the  following time-stepping scheme:\\ \\
For $k=1,\ldots,N_t$, find $(c^k,d^k,y^k)$ such that
\begin{align}\label{TSUZAWA}
 &\mathcal{M}^0 
 \begin{pmatrix}
 c^k\\ d^k
 \end{pmatrix} -  \Delta t\begin{pmatrix}
 \tilde{I}y^k \\ 0
 \end{pmatrix}=
  \begin{pmatrix}
 \frac{\Delta t}{2 }{I}(h^{k-1}+h^k) \\ 0
 \end{pmatrix}- \sum \limits_{m=1}^{k-1} \mathcal{M}^{k-m} \begin{pmatrix}
 c^m\\ d^m
 \end{pmatrix}\ ,
 \\ &  (c^{k})^\top \hat{I}(\mu^k-y^k)\geq - (c^{k-1})^\top \hat{I}(\mu^k-y^k) \quad~\forall \mu^k\ .
 \label{TSinequality}
\end{align}
Conversely, if we have solutions to \eqref{TSUZAWA} and \eqref{TSinequality} for every $k=1,\ldots,N_t$, summing \eqref{TSinequality} yields \eqref{stvar}. We conclude:
\begin{lemma}
 The solution to the space-time variational inequality is also a solution to the time-step variational inequality and vice versa. 
\end{lemma}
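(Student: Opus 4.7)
The plan is to verify the two implications separately, exploiting the block lower-triangular, causal structure of the matrices $\mathcal{M}^j$ and the test-function-by-test-function nature of the variational inequality. Most of the bookkeeping has already been done in the paragraphs preceding the lemma, so the proof reduces to assembling those observations.

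First, I would address the direction from space-time to time-step. Given $(c^k,d^k,y^k)_{k=1}^{N_t}$ solving \eqref{spacetimesystem}--\eqref{stvar}, the $k$-th block row of \eqref{spacetimesystem} is literally \eqref{TSUZAWA}, since the system is block lower-triangular with diagonal blocks $\mathcal{M}^0$. For the inequality \eqref{TSinequality} I would use the test-function choice already indicated in the text: for each fixed $k\in\{1,\dots,N_t\}$, set $\mu=(y^1,\dots,y^{k-1},\mu',y^{k+1},\dots,y^{N_t})$ in \eqref{stvar}. All summands with index different from $k$ vanish because $\mu^j-y^j=0$ for $j\neq k$, and what remains is precisely $(c^k)^\top\hat I(\mu'-y^k)+(c^{k-1})^\top\hat I(\mu'-y^k)\geq 0$, which is \eqref{TSinequality}. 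Positivity of the test function $\mu'$ is inherited from positivity of $\mu$, so the admissibility of the test functions is preserved.

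For the converse, suppose $(c^k,d^k,y^k)_{k=1}^{N_t}$ satisfies \eqref{TSUZAWA} and \eqref{TSinequality} for every $k$. Stacking the identities \eqref{TSUZAWA} for $k=1,\dots,N_t$ gives back the block lower-triangular system \eqref{spacetimesystem}, using $c^0=0$ and the convolution identity $\sum_{m=1}^k \mathcal{M}^{k-m}[c^m,d^m]^\top = \mathcal{M}^0[c^k,d^k]^\top + \sum_{m=1}^{k-1}\mathcal{M}^{k-m}[c^m,d^m]^\top$. For the inequality, given an arbitrary admissible $\mu=(\mu^1,\dots,\mu^{N_t})\in (V_{t,h}^{0,0})^+$, apply \eqref{TSinequality} with $\mu^k$ for each $k$ and sum over $k$; the computation displayed just before the lemma shows this sum is exactly the left-hand side of \eqref{stvar}, so \eqref{stvar} holds for every admissible $\mu$.

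There is no real obstacle: the lemma is essentially a restatement of the computations carried out in the derivation of the time-stepping scheme, and the only point requiring a moment of care is checking that the special test function used to localize the inequality in $k$ is indeed admissible, i.e.~componentwise nonnegative, which follows immediately from the nonnegativity of $\mu'$ and $y^j$.
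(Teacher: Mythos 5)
Your proposal is correct and follows the same approach the paper uses: the paper's derivation preceding the lemma already localizes the space-time inequality via the test function $\mu=(y^1,\dots,y^{k-1},\mu',y^{k+1},\dots,y^{N_t})$ to obtain \eqref{TSinequality}, and the paper explicitly notes that summing \eqref{TSinequality} over $k$ recovers \eqref{stvar}, while the equality system is block lower-triangular by causality. Your additional remark on the admissibility (nonnegativity) of the localized test function is a correct and worthwhile clarification of a point the paper leaves implicit.
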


The resulting time-step Uzawa algorithm is as follows:
\begin{algorithm}[H]
\caption{Time-step Uzawa algorithm}
\label{alg2}
\begin{algorithmic}
\STATE choose $\rho>0$:
\FOR{n=1,\ldots, $N_t$}
\STATE k=0: $(y^n)^0=\vec{0}$
\WHILE{stopping criterion not satisfied}
\STATE \textbf{solve: }$ \mathcal{M}^0 
 \begin{pmatrix}
 c^n\\ d^n
 \end{pmatrix} -  \Delta t\begin{pmatrix}
 \tilde{I}(y^n)^k \\ 0
 \end{pmatrix}=
  \begin{pmatrix}
 \frac{\Delta t}{2 }{I}(h^{n-1}+h^n) \\ 0
 \end{pmatrix}- \sum \limits_{m=1}^{n-1} \mathcal{M}^{n-m} \begin{pmatrix}
 c^m\\ d^m
 \end{pmatrix} $
\STATE \textbf{compute:} $(y^n)^{k+1}=\max\{0,(y^n)^k+\rho((c^n)+ (c^{n-1})^\top \hat{I})\}$
\STATE $k \leftarrow k+1$
\ENDWHILE
\ENDFOR
\end{algorithmic}
\end{algorithm}
{The Uzawa algorithm converges in each time step, as long as $\mathcal{M}^{0}$ is positive definite, and $\rho$ is sufficiently small.}

\begin{remark}
As for the contact problem, the mixed formulation for the punch problem \eqref{mixedFormulationV} may be discretized and solved with the above space-time or time-step Uzawa algorithms. The Dirichlet-to-Neumann operator is here replaced by the single-layer operator.
\end{remark}

\section{Numerical experiments}
\label{experiments}

As stated above, we set $\sigma = 0$ in our computations and discretize the Dirichlet-to-Neumann operator as in Section \ref{MOT}.

\subsection{Dirichlet-to-Neumann operator on unit sphere}
\begin{example}\label{DtNeq}
We solve the discretized variational equality \eqref{discreteeq} for the Dirichlet-to-Neumann operator on $\Gamma = S^2$, with a right hand side obtained from the Neumann data of a known, radially symmetric solution to the wave equation. Specifically, \begin{align*}
 & h=\partial_n u(t,x)\mid_{\{|x|=1\}}=\\ &\textstyle{(-\frac{3}{4}+\cos(\frac{\pi}{2}(4-t))+\frac{\pi}{2}\sin(\frac{\pi}{2}(4-t)) -\frac{1}{4}(\cos({\pi}(4-t))+ \pi\sin({\pi}(4-t))))[H(4-t)-H(-t)]}\ ,
\end{align*}
where $H$ is the Heaviside function. The solution $u$ of the Dirichlet-to-Neumann equation $\mathcal{S}u=h$ corresponds to the Dirichlet data of the solution to the wave equation. Hence,
\begin{align*}
\textstyle{u(t,r)\mid_{\Gamma}=(\frac{3}{4}- \cos(\frac{\pi (4-t)}{2})+\frac{1}{4}\cos({\pi(4-t)}))[H(4-t)-H(-t)].}
\end{align*}
We fix the CFL ratio $\frac{\Delta t}{h}\approx 0.6$ and set $T=5$.
\end{example}

Figure \ref{FigL2} shows the $L^2(\Gamma)$-norm of the exact, respectively numerical solution as a function of time. \textcolor{black}{Figure \ref{Figerr} depicts the error in this norm and shows that the error remains uniformly bounded in time.} As the number of degrees of freedom increases, the $L^2([0,T]\times \Gamma)$-norm of the error between the numerical approximations and the exact solution converges to $0$ at a rate $0.7$, as shown in Figure \ref{DNIcosaeder}.  Here, we compute the experimental convergence rate $\alpha$ as
\begin{align*}
\alpha= \frac{\log E(u_{1}) - \log E(u_{2})}{\log DOF_1 - \log DOF_{2}}\ ,
\end{align*}
\textcolor{black}{where $E(u_j)$ denotes the $L^2([0,T]\times \Gamma)$-norm of the error between the numerical solution $u_j$ and the exact solution $u$. For a fixed CFL ratio $ \frac{\Delta t}{h}$, we have $DOF \sim h^{-3}$. In terms of $h$, we therefore obtain a rate of convergence of $2.1$. Note the approximation error for the geometry in this example.} The results exemplify the convergence of our proposed method to approximate the Dirichlet-to-Neumann operator.  

\begin{figure}[H]
\centering
 \centerline{\includegraphics[width=12cm,height=6cm]{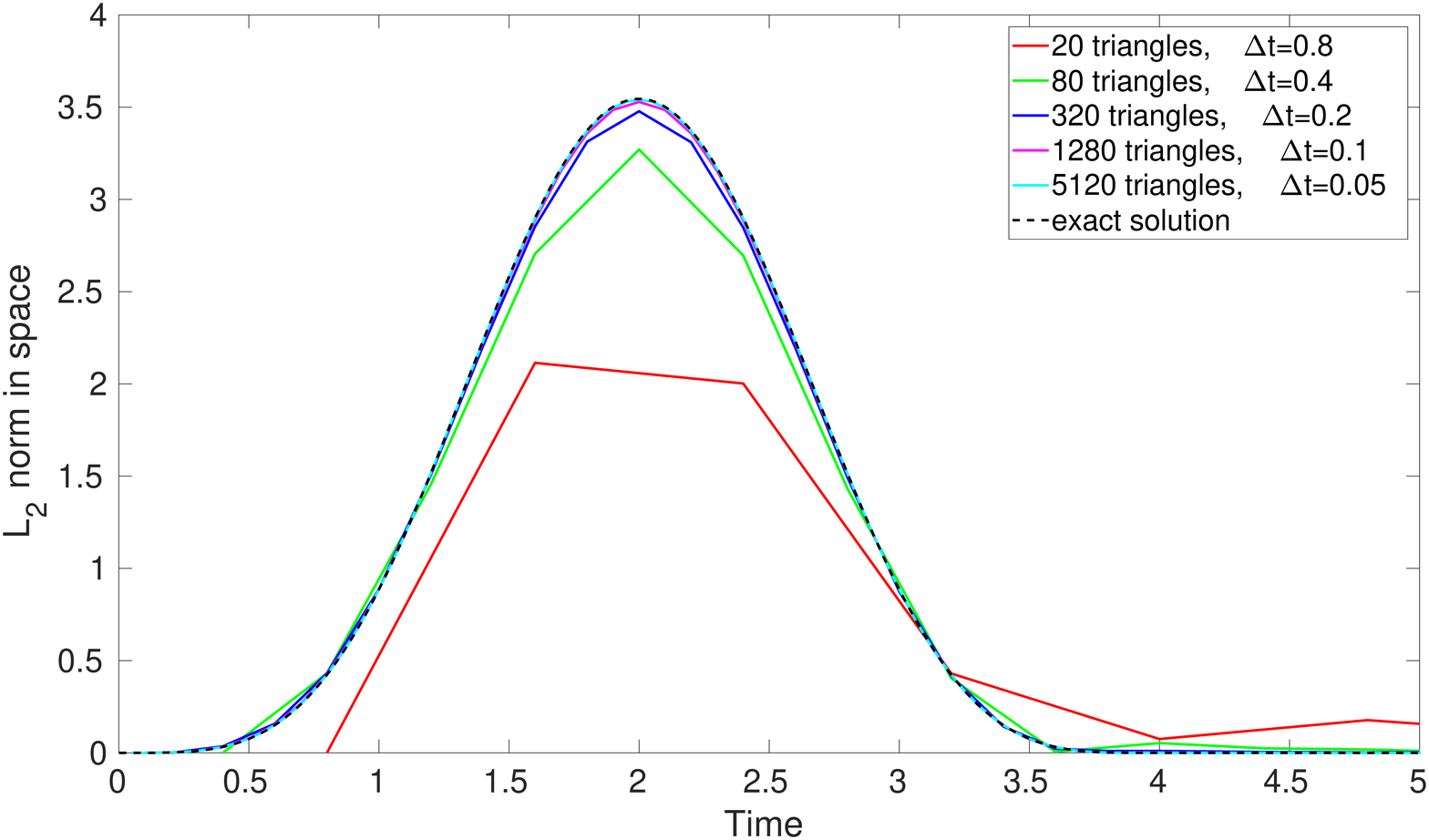}}
 \caption{$L^2(\Gamma)$-norm of the solution to $\mathcal{S}u=h$ for fixed CFL ratio $\frac{\Delta t}{h}\approx 0.6$.}
\label{FigL2}
\end{figure}
\begin{figure}[H]
\centering
 \centerline{\includegraphics[width=12cm,height=6cm]{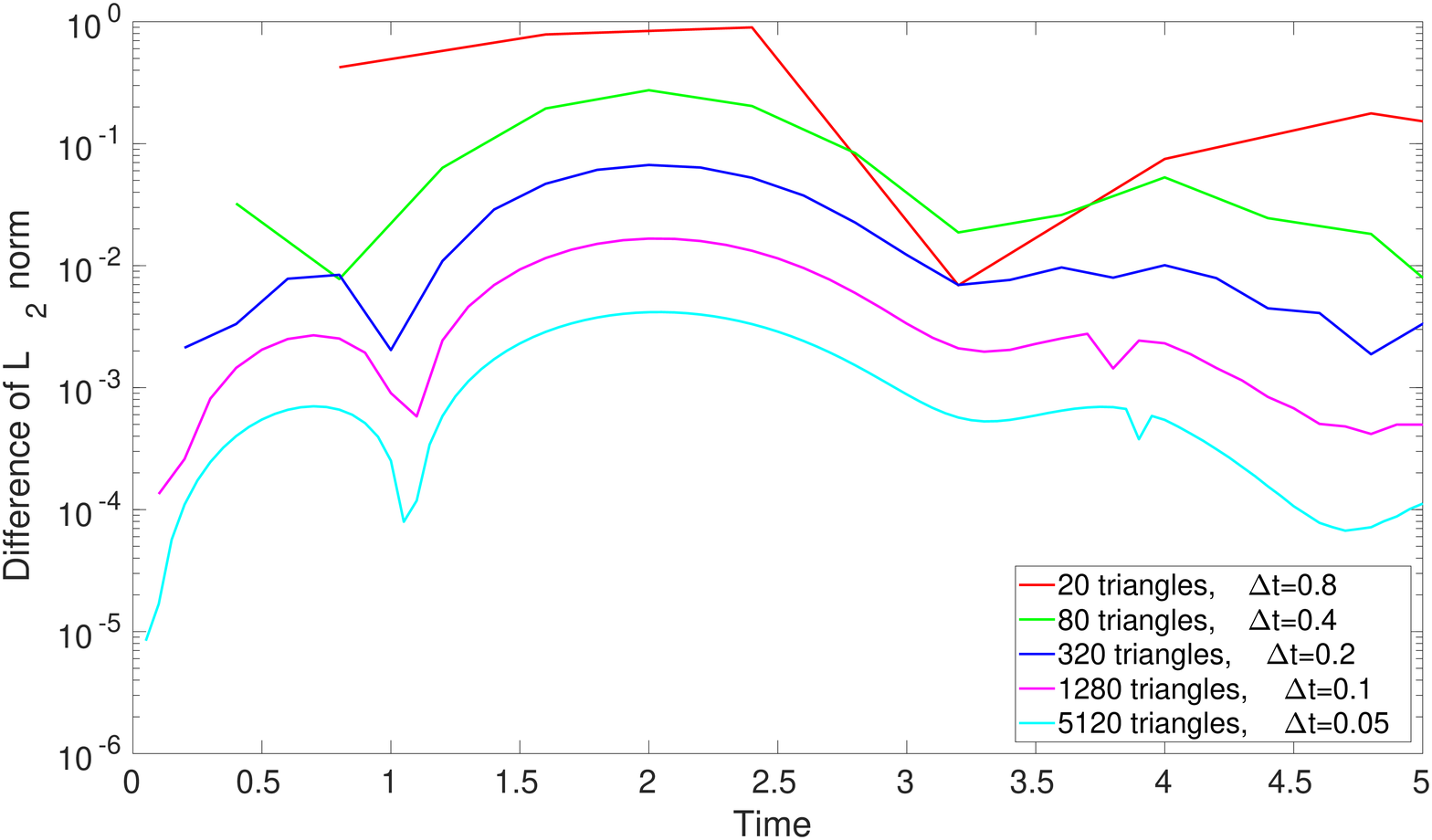}}
 \caption{Absolute error $|\|u\|_{L^2(\Gamma)} - \|u_{\Delta t,h} \|_{L^2(\Gamma)}|$ as a function of time for fixed $\frac{\Delta t}{h}$.}
\label{Figerr}
\end{figure}
\begin{figure}[H]
\centering
 \centerline{\includegraphics[width=12cm,height=6cm]{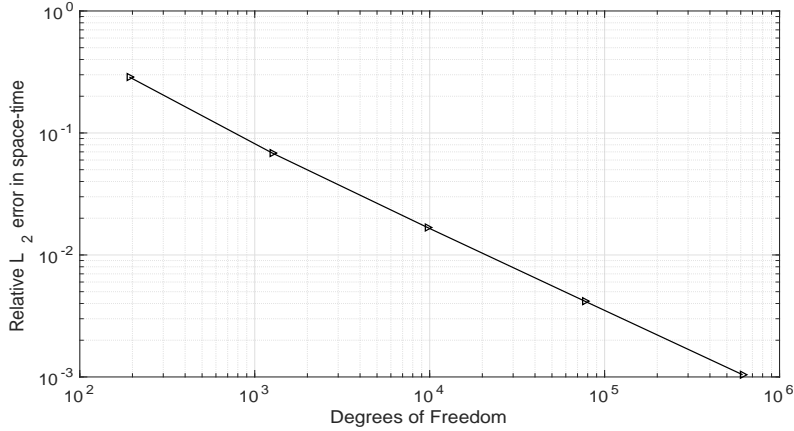}}
 \caption{$L^2([0,T]\times \Gamma)$-error vs.~degrees of freedom of the solution to $\gamadi u=h$ for fixed $\frac{\Delta t}{h}$.}
 \label{DNIcosaeder}
\end{figure}

\subsection{Contact problem: Dirichlet-to-Neumann operator}

We now consider the discretization of the nonlinear contact problem \eqref{varI} for both flat and more general contact areas. In this case, no exact solutions are known, and we compare the numerical approximations to a reference solution on an appropriately finer space-time mesh.

\begin{example} \label{contact1} 
\textcolor{black}{ We choose $\Gamma=[-2,2]^2\times \{0\}$ with contact area $G=[-1,1]^2 \times \{0\}$ for times up to $T=6$, with the CFL ratio \textcolor{black}{$\frac{\Delta t}{h} \approx 1.06$}. The right hand side of the contact problem \eqref{varI} is given by $$h(t,x)=e^{-2t}t\cos(2\pi x)\cos(2\pi y) \chi_{[-0.25,0.25]}(x) \chi_{[-0.25,0.25]}(y)\ .$$  We use the discretization from Section 8.1,  in particular with a Lagrange multiplier that is piecewise linear in space and constant in time.
The numerical solutions are compared to a reference solution on a mesh with $12800$ triangles, and we use $\Delta t=0.075$. }
  \end{example}
The inequality is solved using the space-time Uzawa algorithm \textcolor{black}{as in Algorithm \ref{alg1} and $\rho=20$}. We stop the solver when subsequent iterates have a relative difference of less than $10^{-11}$.\\

Figure \ref{Fig100} shows the solution $u_{\Delta t, h}$ to the contact problem (left column) and the corresponding Lagrange multiplier $\lambda_{\Delta t, h}$ (right column) for several time steps. The solution gradually develops a smooth bump in the center, which gives rise to a radially outgoing wave. A nonvanishing Lagrange multiplier $\lambda= \gamadi_\sigma u -h$ indicates the emergent contact forces at the depicted times \textcolor{black}{$t=4.275$ and $t=5.025$}.

\textcolor{black}{Figure \ref{l2Contactrelative} considers the relative error to the reference solution in $L^2([0,T]\times G)$. The numerical approximations converge at a rate of approximately $\alpha=0.8$ with increasing degrees of freedom.  In terms of $h$, we obtain a rate of convergence of $2.4$. This compares to the rate of convergence for the Dirichlet-to-Neumann equation in Example \ref{DtNeq}, where also the geometry needed to be approximated. 
Algorithmically, the computational cost of the nonlinear solver is dominated by the cost of computing the matrix entries.}

  \begin{figure}[H]
  \makebox[\linewidth]{
   \subfigure[t=0.075]{
\includegraphics[width=9cm, height=4cm]{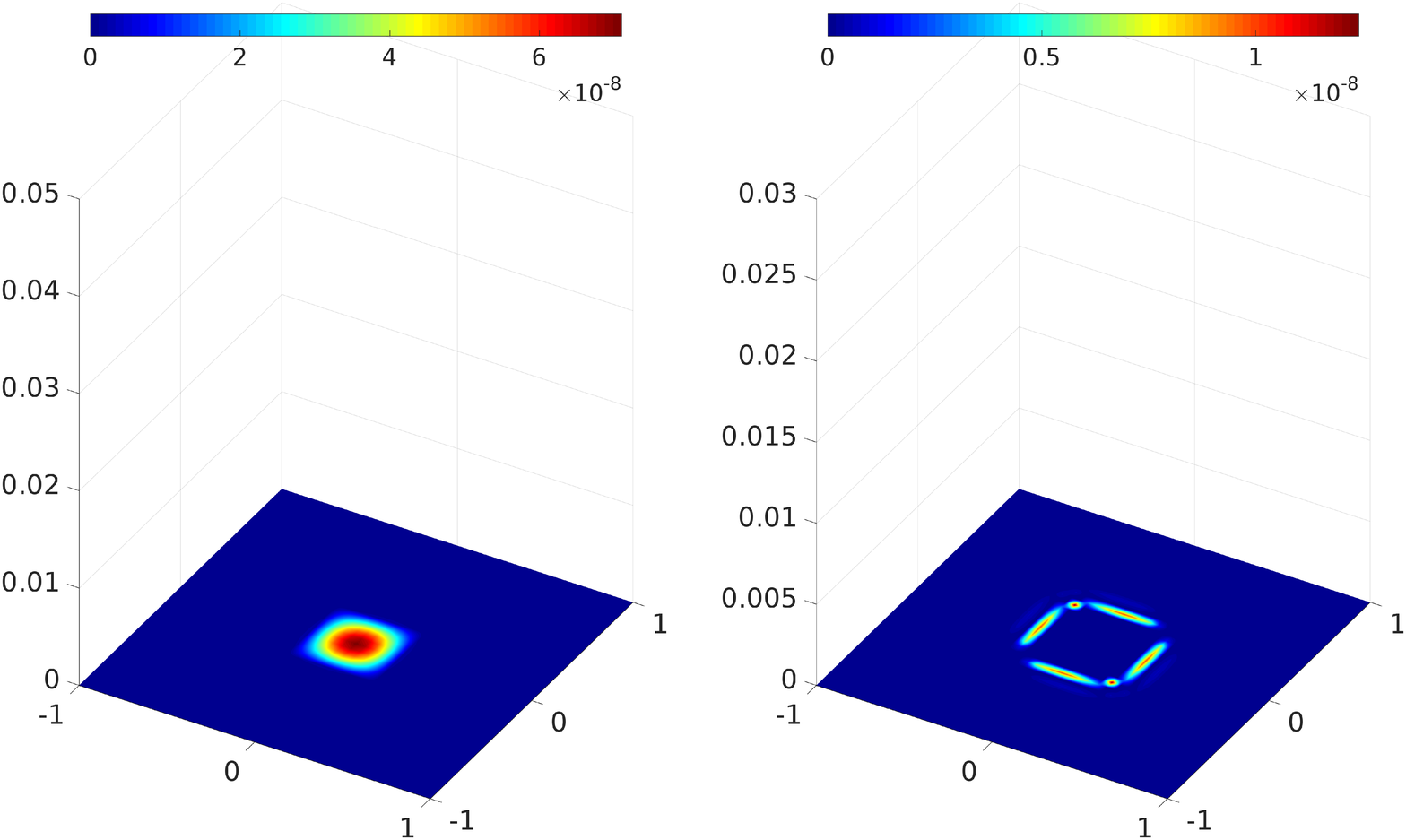} }
\subfigure[t=2.55]{
\includegraphics[width=9cm, height=4cm]{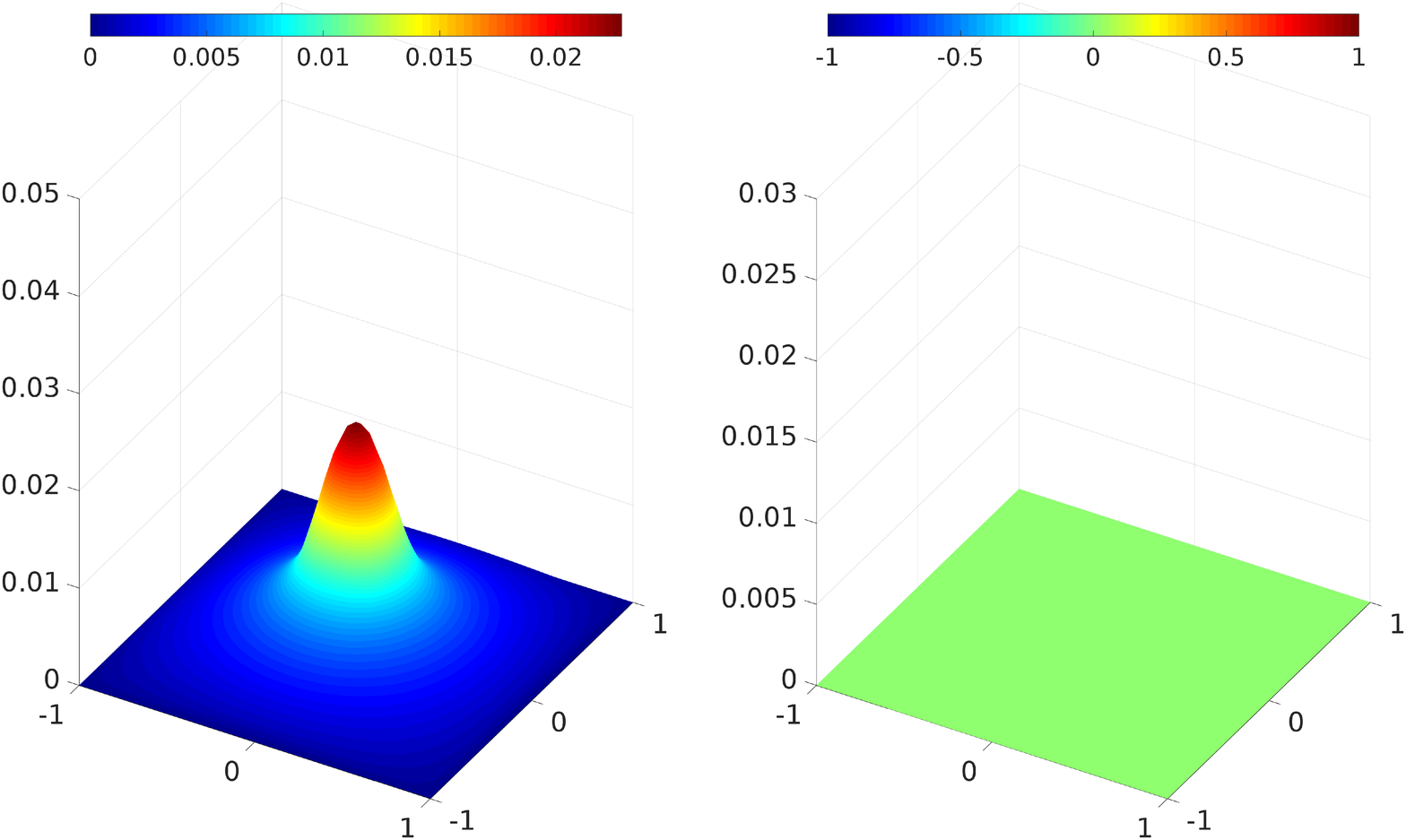} } }
\makebox[\linewidth]{
\subfigure[t=4.275]{
\includegraphics[width=9cm, height=4cm]{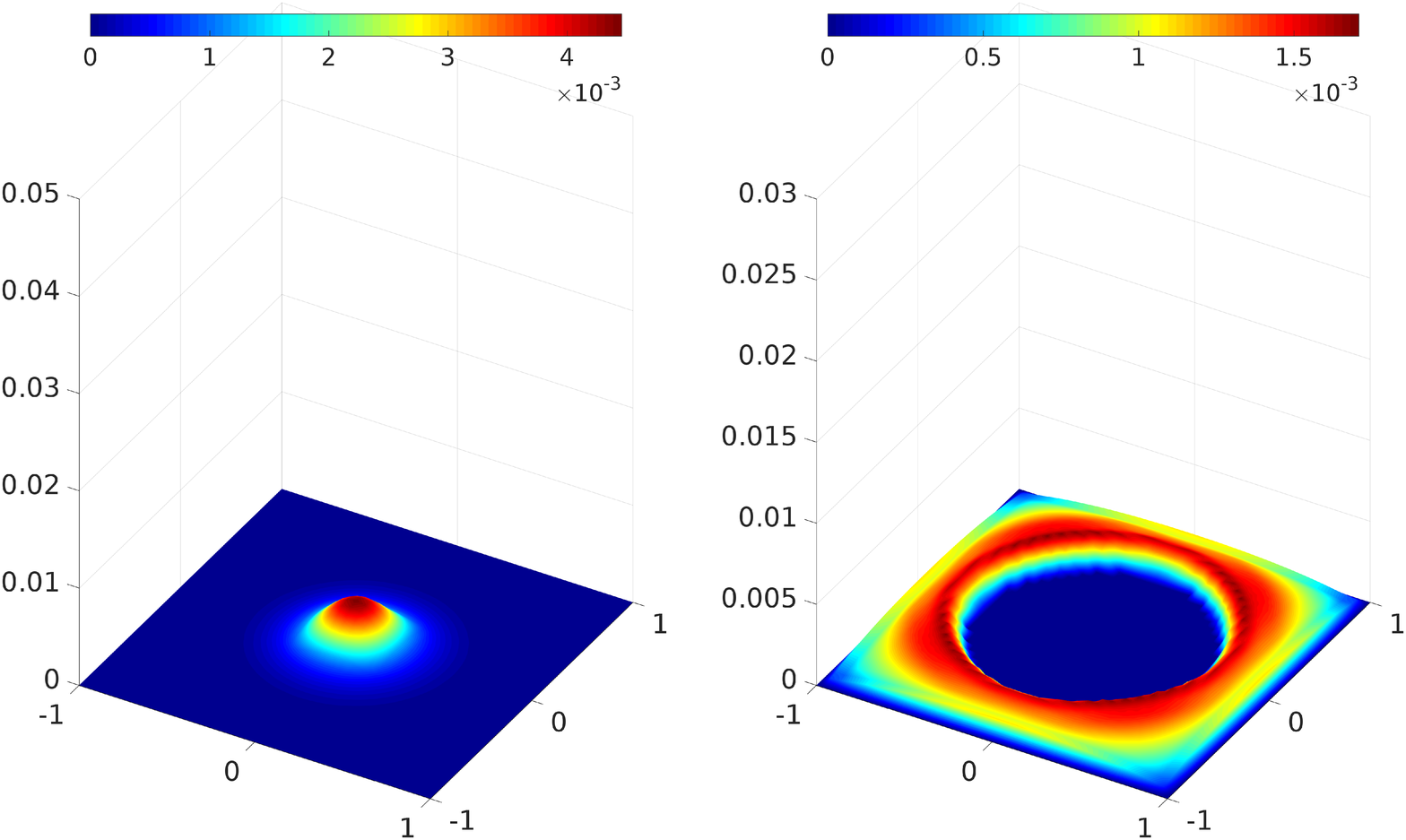} }
\subfigure[t=5.025]{
\includegraphics[width=9cm, height=4cm]{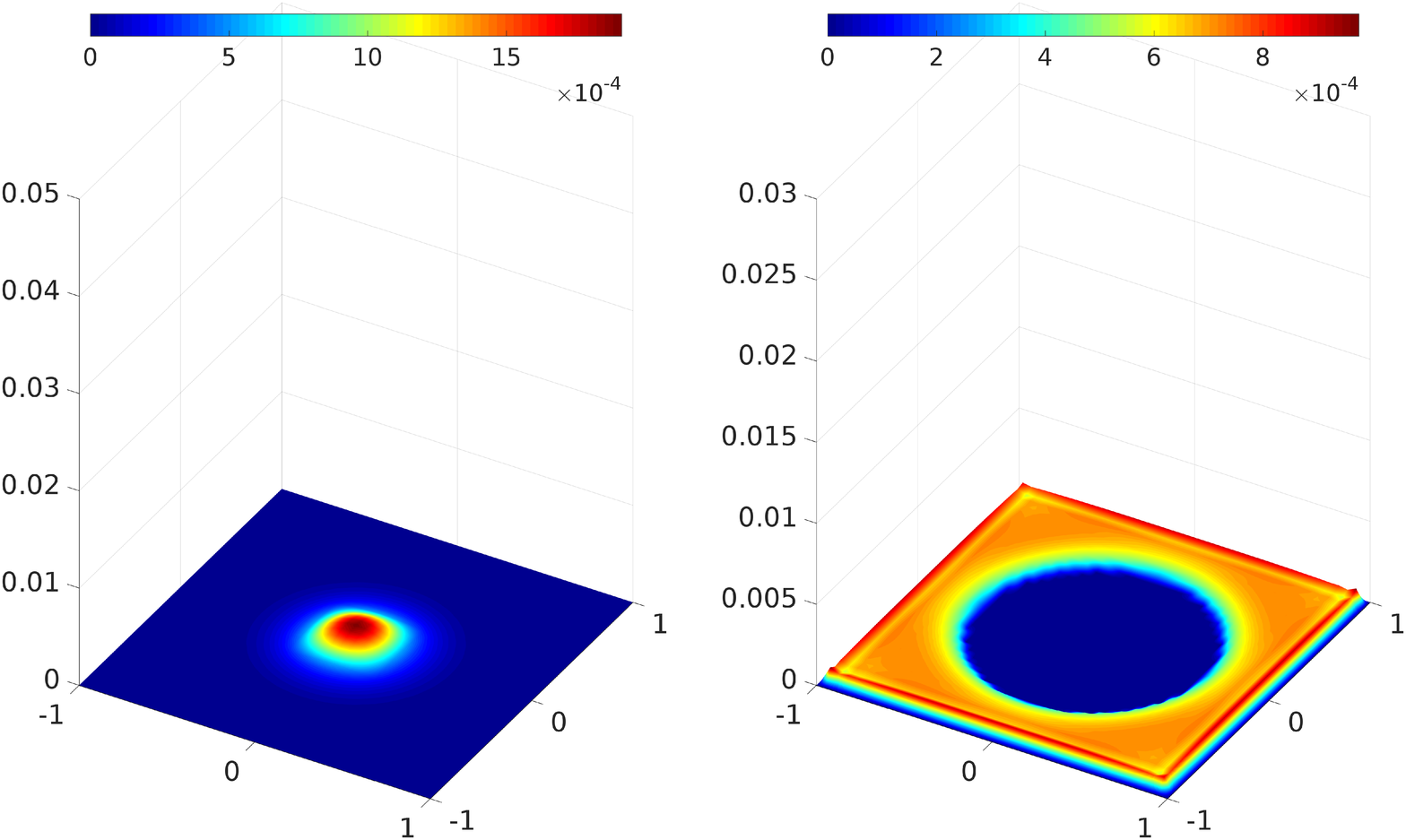} } }
\caption{{Evolution of $u$ and $\lambda$ in $G=[-1,1]^2 \times \{0\}$ for the contact problem,  Example \ref{contact1}.}}
\label{Fig100}
\end{figure}
\begin{figure}[H]
\makebox[\linewidth]{
\includegraphics[width=12cm,height=6cm]{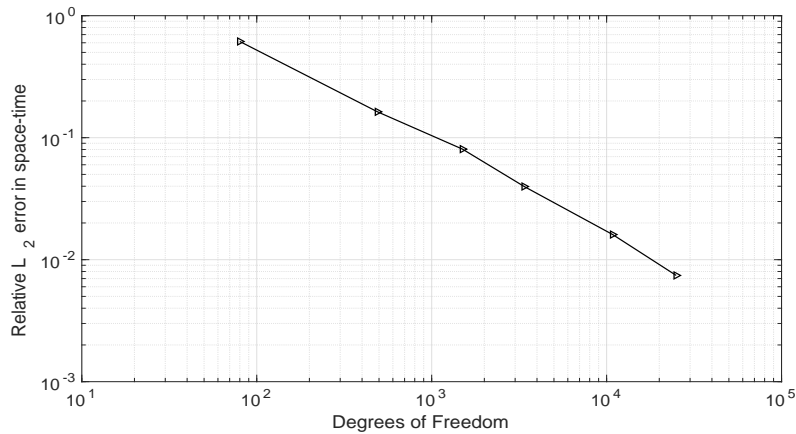} }
\caption{Relative $L^2([0,T]\times \Gamma)$-error vs.~degrees of freedom of the solutions to the contact problem for fixed $\frac{\Delta t}{h}$, {Example \ref{contact1}}.}
\label{l2Contactrelative}
\end{figure}

Next we compare the above space-time Uzawa algorithm with the time-step Uzawa variant \textcolor{black}{from Algorithm \ref{alg2}}. In a given time step, we stop the Uzawa iteration when subsequent iterates either have a relative difference of less than $10^{-12}$ or if the $\ell_\infty$-norm is less than $10^{-10}$. For the space-time Uzawa algorithm we use the stopping criterion from before.
\begin{example}\label{timestepexample}
We consider the contact problem \eqref{varI} with the geometry and right hand side from Example \ref{contact1}. On a fixed mesh of $3200$ triangles, and with $\Delta t=0.1$ and $T=6$, we investigate the difference of the approximate solutions obtained from the space-time and time-step Uzawa algorithms.
\end{example}
Figure \ref{relerror} shows the temporal evolution of the relative difference in $L^2(\Gamma)$ between the two methods in a semi-logarithmic plot. The difference is smaller than $0.01 \%$ for all times, but increases sharply around the onset of contact shortly after time $t=4$.  
\begin{figure}[H]
\makebox[\linewidth]{
\includegraphics[width=12.5cm, height=5cm]{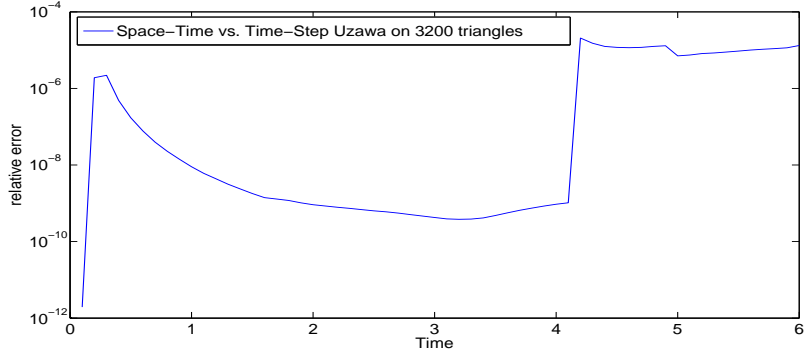} }
\caption{Relative $L^2(\Gamma)$-error between the solutions of the space-time and time-step Uzawa algorithms, {Example \ref{timestepexample}}.}
\label{relerror}
\end{figure}
A comparison of the clock times shows the computational efficiency of the time-step Uzawa algorithm: Its runtime of {455.3} cpu seconds compares to {1301.6} cpu seconds required by the space-time algorithm.\\

To illustrate our method for non-flat contact geometries, we consider a cube with three contact faces. Physically, one may think of a rigid cube which is tightly fixed to an elastic surrounding material material on its three other faces.
\begin{example}\label{contact2}
Let $\Gamma$ be the surface of the cube $[-2,2]^3$, with contact area $G$ consisting of the top, front and right faces.  We set $T=6$ 
and use the same ansatz and test functions as in Example 27. On each of the contact faces we prescribe a right hand side $$h(t,x)=e^{-2t}t^4\cos(2\pi x)\cos(2\pi y) \chi_{[-0.25,0.25]}(x) \chi_{[-0.25,0.25]}(y)\ ,$$ centered in the midpoint of each face. \textcolor{black}{The benchmark is obtained by extrapolation.}
\end{example}
The evolution of $u_{\Delta t, h}$ and the Lagrange multiplier $\lambda_{\Delta t, h}$ on the top face of the cube for are depicted in Figure \ref{Figlast} \textcolor{black}{for $\Delta t=0.1$ and CFL ratio $ \frac{\Delta t}{h}\approx 0.7$}. The nonzero displacement $u_{\Delta t, h}$ spreads from the neighboring contact faces into the shown area by time $t=5$ and eventually leads to strong contact near the upper left corner. 
  \begin{figure}[H]
  \makebox[\linewidth]{
   \subfigure[t=0.1]{
\includegraphics[width=9cm, height=4cm]{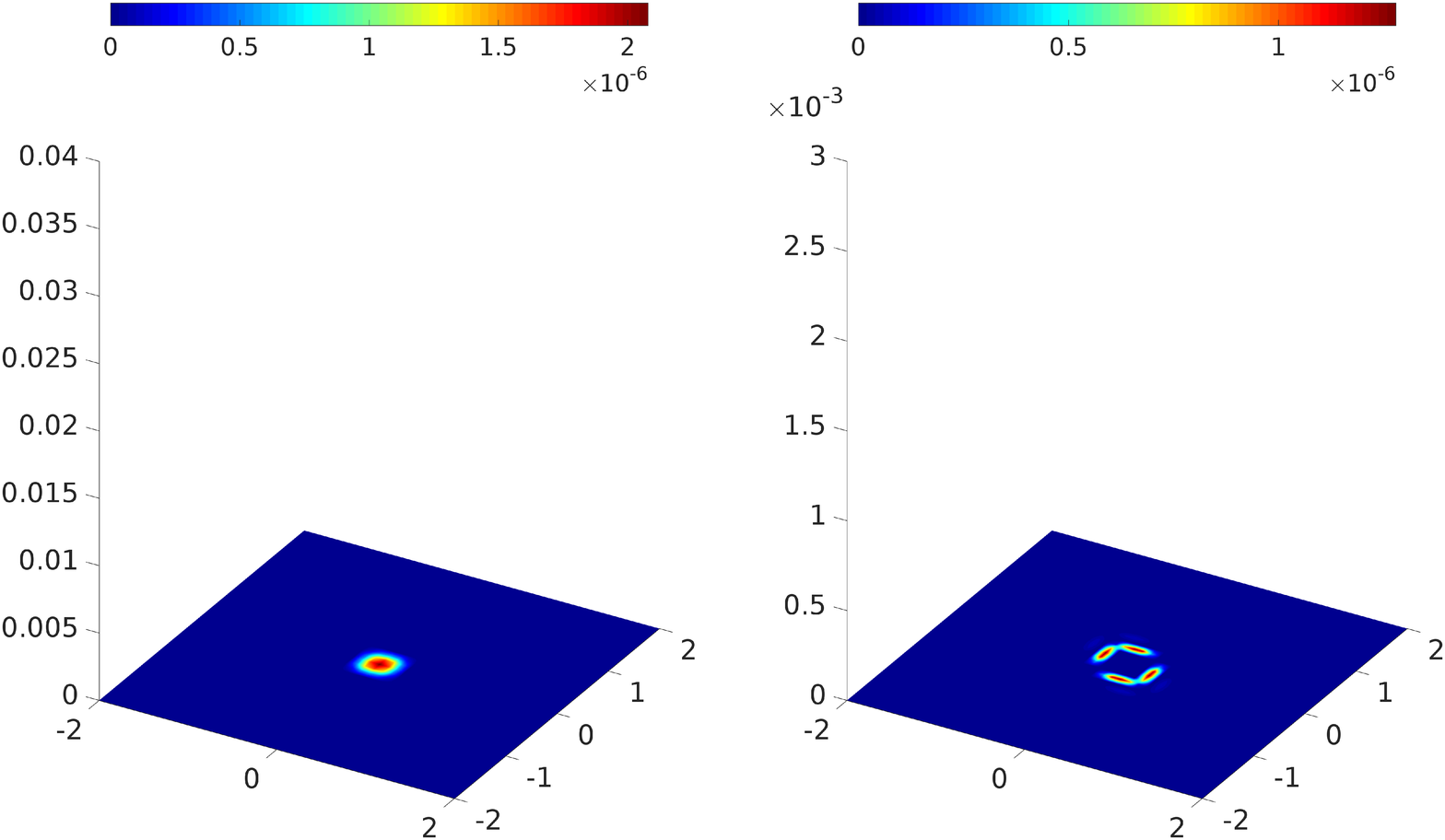} }
\subfigure[t=3]{
\includegraphics[width=9cm, height=4cm]{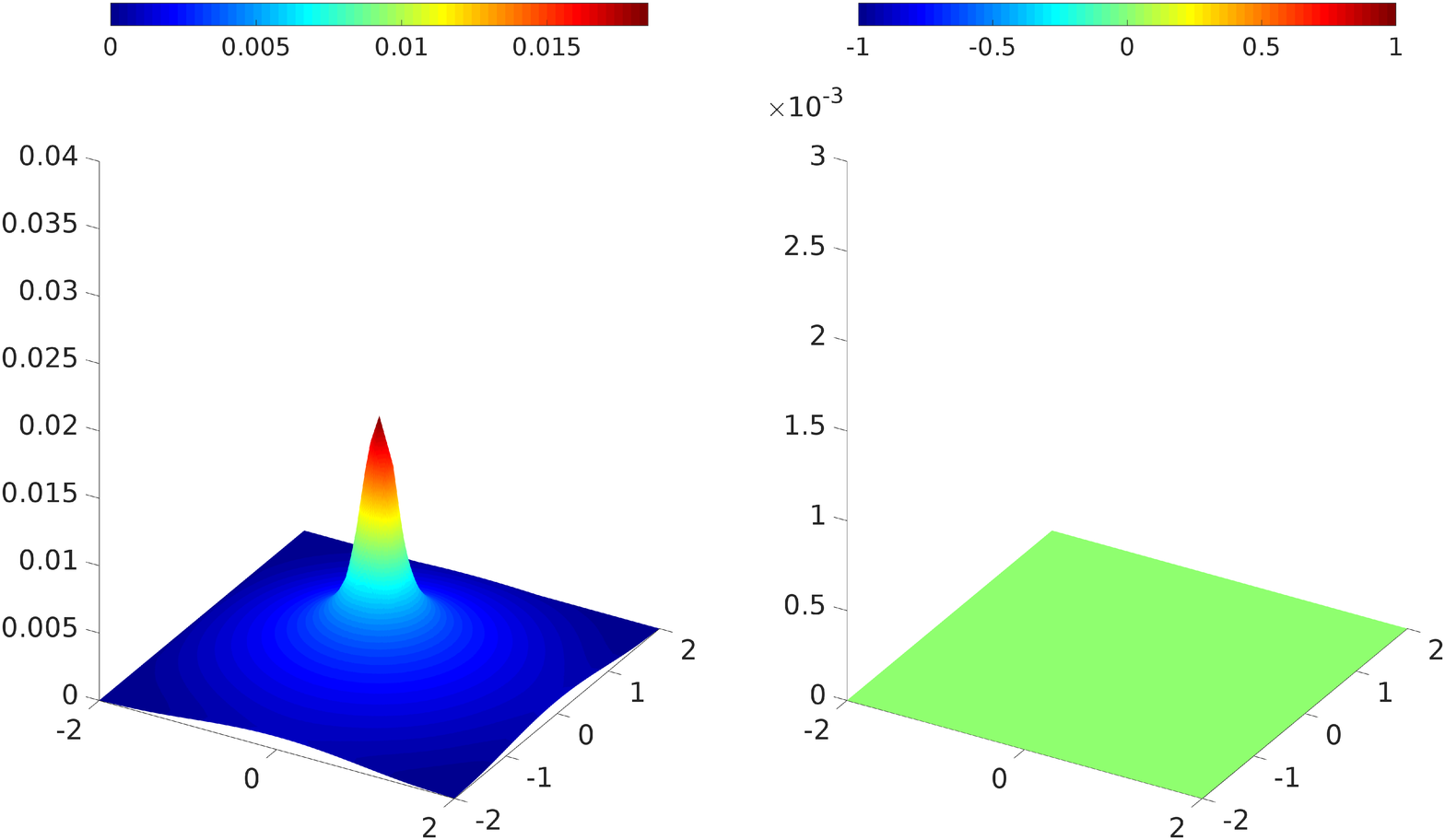} } }
\makebox[\linewidth]{
\subfigure[t=5]{
\includegraphics[width=9cm, height=4cm]{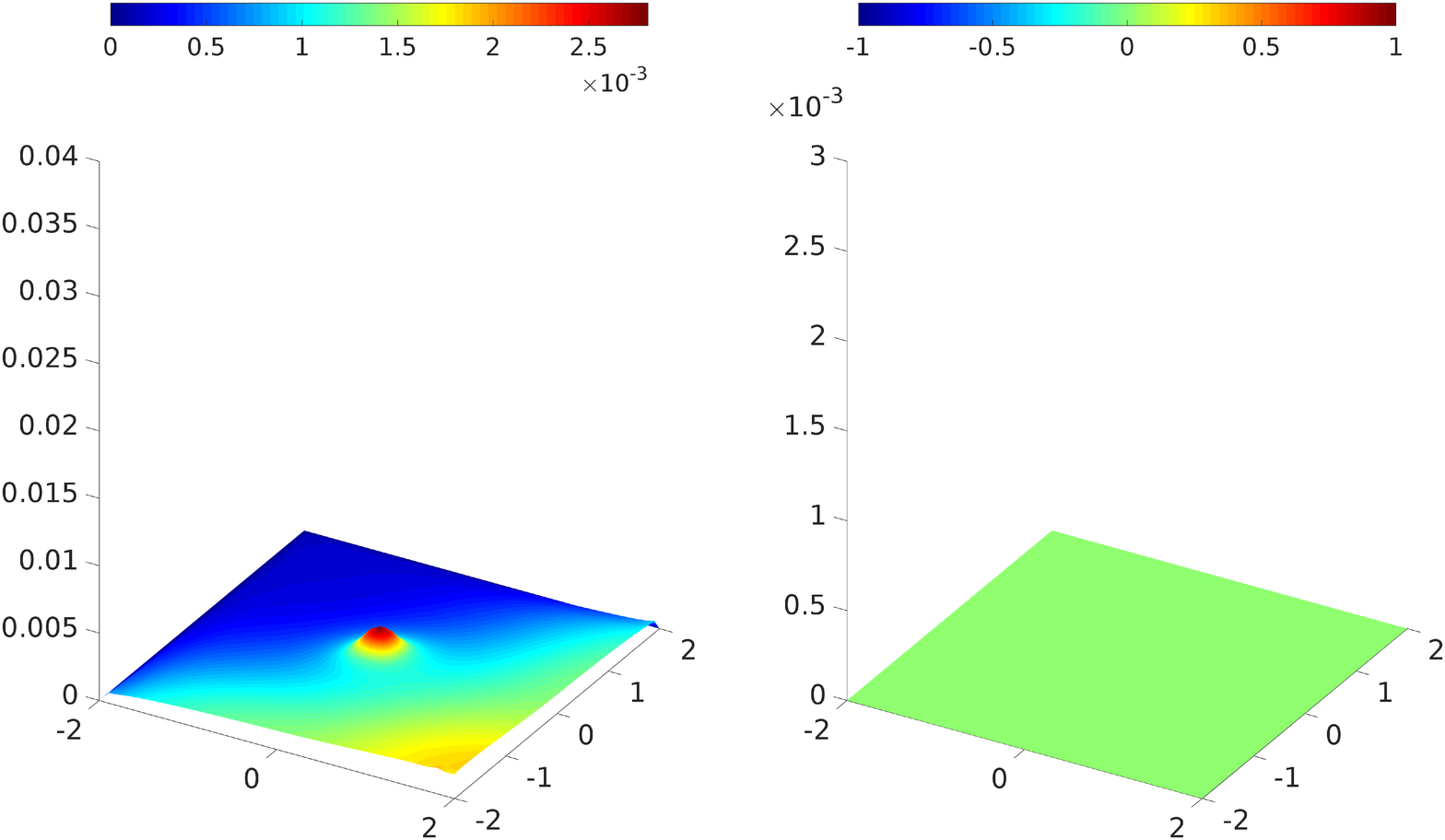} }
\subfigure[t=6]{
\includegraphics[width=9cm, height=4cm]{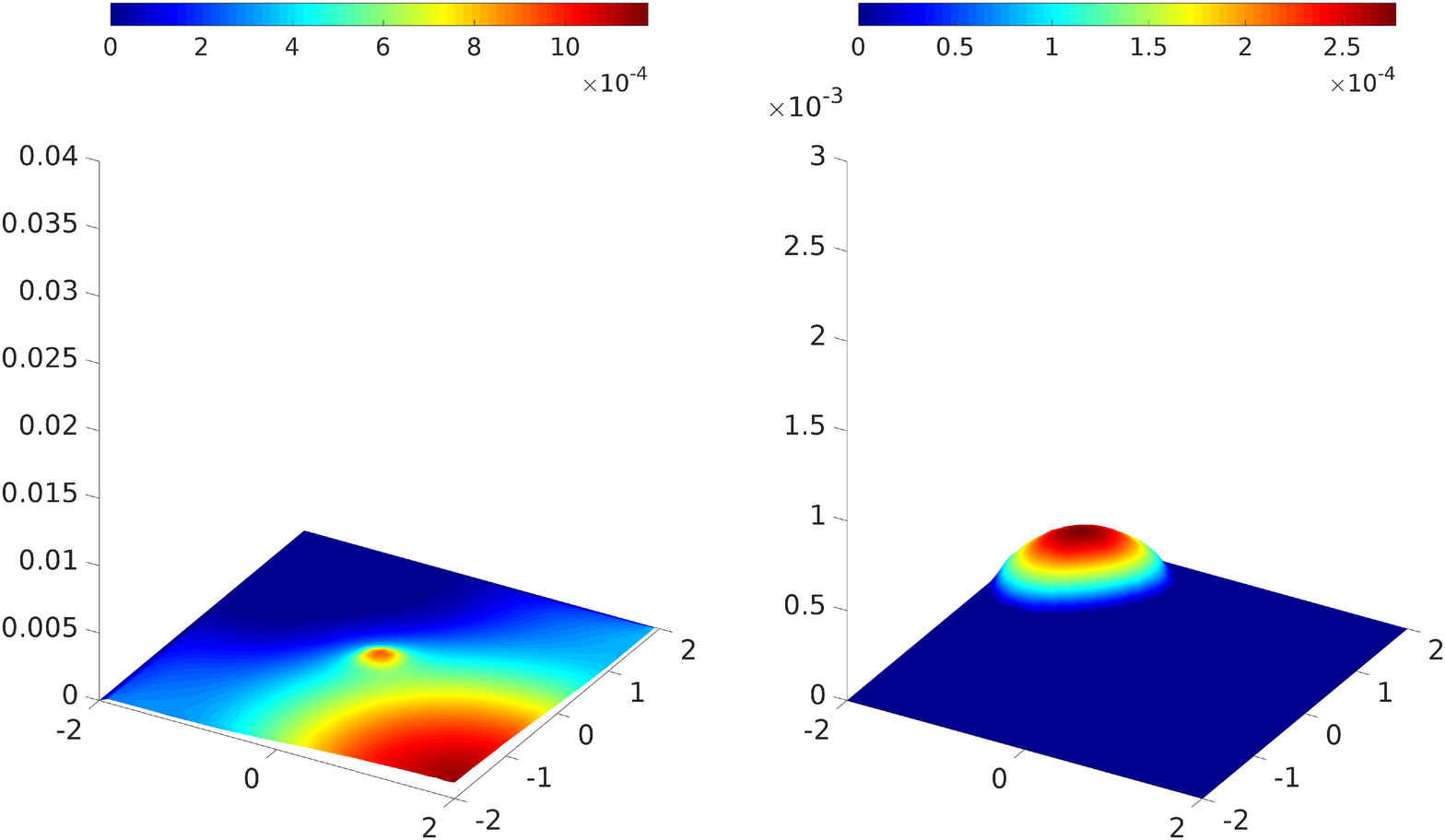} } }
\caption{{Evolution of $u$ and $\lambda$ in $[-2,2]^2\times \{2\}$ for the contact problem on $[-2,2]^3$,  Example \ref{contact2}.}}\label{Figlast}
\end{figure}
The error between the benchmark and approximate solutions on coarser meshes is shown in Figure \ref{l2ContactCuberelative} \textcolor{black}{for $\Delta t=0.075 $ with fixed CFL ratio $ \frac{\Delta t}{h}\approx 0.53$}. It shows a convergence with an approximate convergence rate of $\alpha = 0.6$, respectively $1.8$ in terms of $h$. Because the convergence deviates from a straight line, the asymptotic convergence rate might differ slightly. Note that the considered meshes are not refinements of each other, which may explain the kink in the third data point.

\begin{figure}[H]
\makebox[\linewidth]{
\includegraphics[width=12cm,height=6cm]{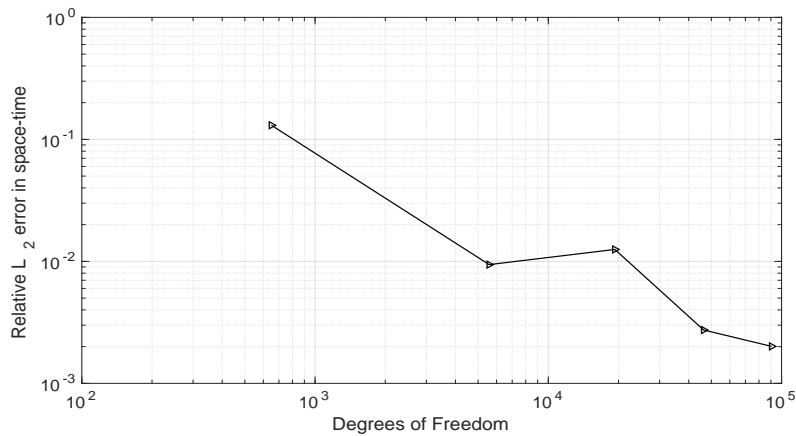} }
\caption{Relative $L^2([0,T]\times \Gamma)$-error vs.~degrees of freedom for the contact problem for fixed $\frac{\Delta t}{h} \approx 0.53$, {Example \ref{contact2}}.}
\label{l2ContactCuberelative}
\end{figure}

\subsection{Single-layer potential}

We finally address the punch problem described by the variational inequality \eqref{varVI}. Unlike for the problems involving the Dirichlet-to-Neumann operator, in this case we do not need to approximate the integral operator. As in the previous numerical examples, we consider both flat and more general contact geometries.

\begin{example}\label{punchex11}
\textcolor{black}{For the punch problem \eqref{varVI} we choose $\Gamma=[-2,2]^2\times\{0\}$ and the area of contact $G=[-1.2,1.2]^2\times\{0\}$.  We set $T=6$ and keep the CFL ratio fixed at \textcolor{black}{$\frac{\Delta t}{h}\approx 1.06$}. As right hand side \textcolor{black}{we consider as in Example \ref{contact1}} $h(t,x)=e^{-2t}t\cos(2\pi x)\cos(2\pi y) \chi_{[-0.25,0.25]}(x) \chi_{[-0.25,0.25]}(y)$, and we look for a numerical solution and Lagrange multiplier which are piecewise constant in time, linear in space.}
\end{example}
We solve the variational inequality using the time-step Uzawa algorithm. In a given time step, we stop the Uzawa iteration when subsequent iterates either have a relative difference of less than $10^{-12}$ or if the $\ell_\infty$-norm is less than $10^{-10}$. 

Figure \ref{Fig1} shows the solution $u_{\Delta t,h}$ to the punch problem and its Lagrange multiplier $\lambda_{\Delta t,h}$. In this case the mesh consists of $12800$ triangles and  $\Delta t=0.075$} \textcolor{black}{and $\rho=0.01$}. Contact is observed for most of the considered time interval.
  \begin{figure}[H]
  \makebox[\linewidth]{
   \subfigure[t=0.075]{
\includegraphics[width=9cm, height=4cm]{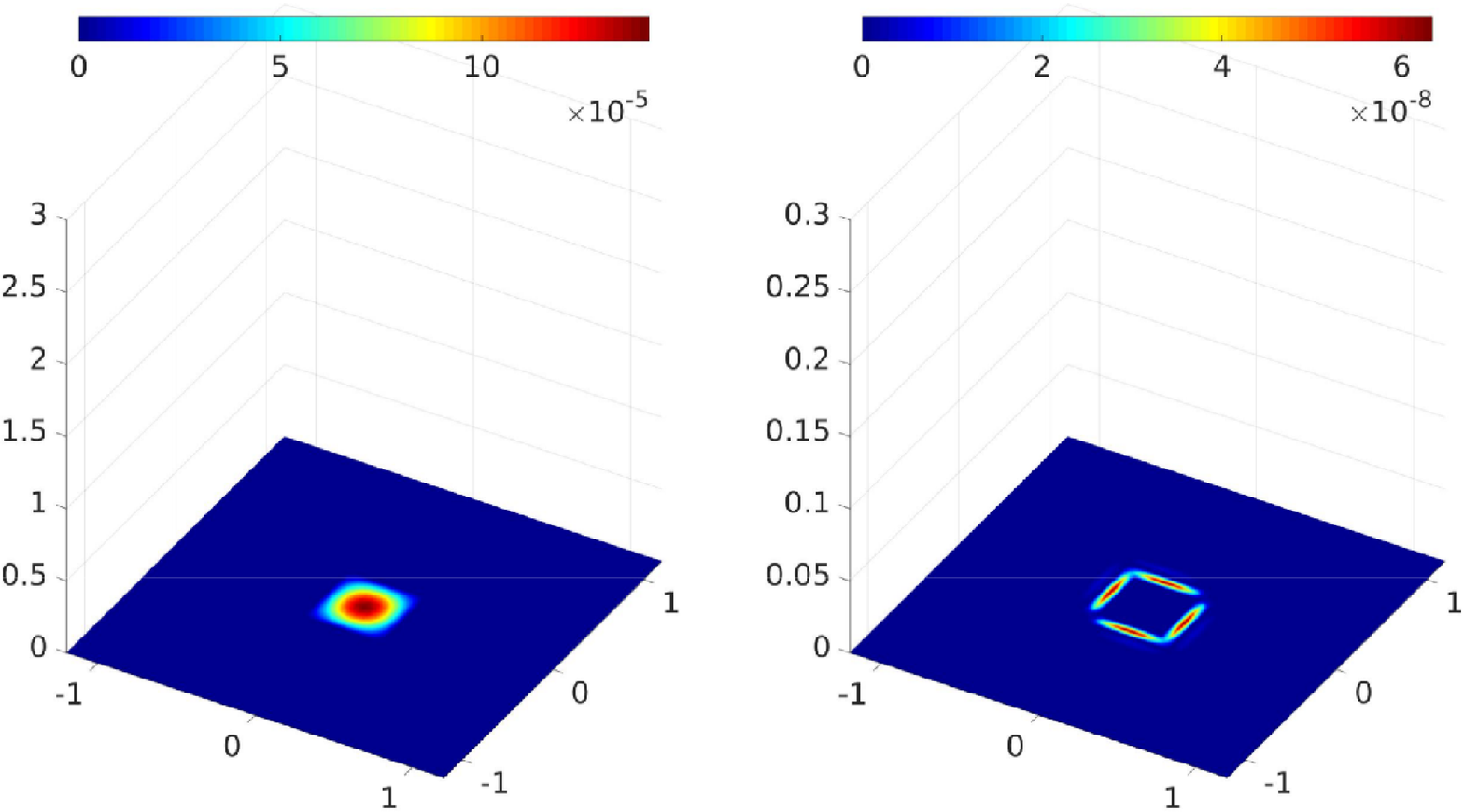} } 
\subfigure[t=1.05]{
\includegraphics[width=9cm, height=4cm]{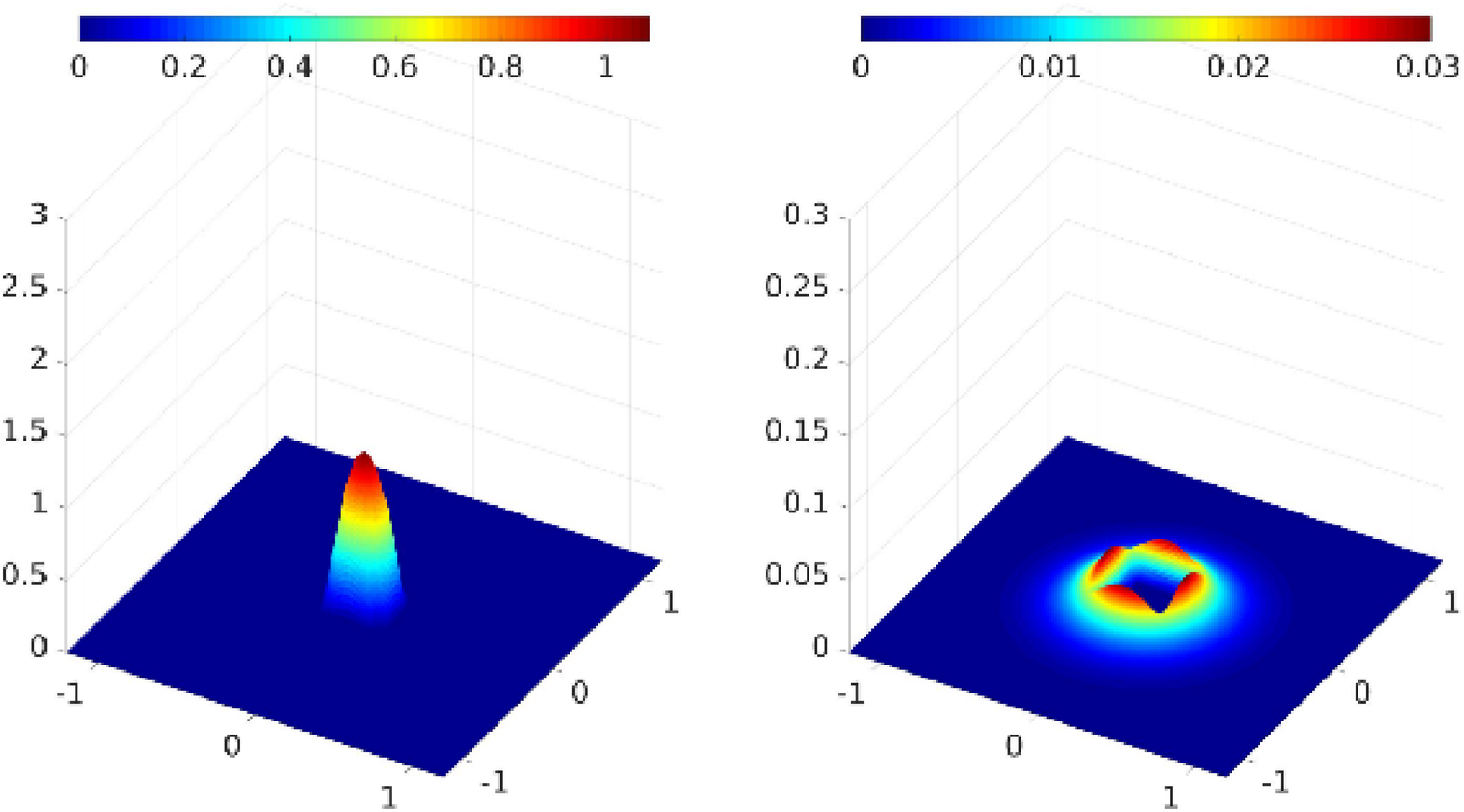} } }
\end{figure}
\begin{figure}[H]
\makebox[\linewidth]{
\subfigure[t=3]{
\includegraphics[width=9cm, height=4cm]{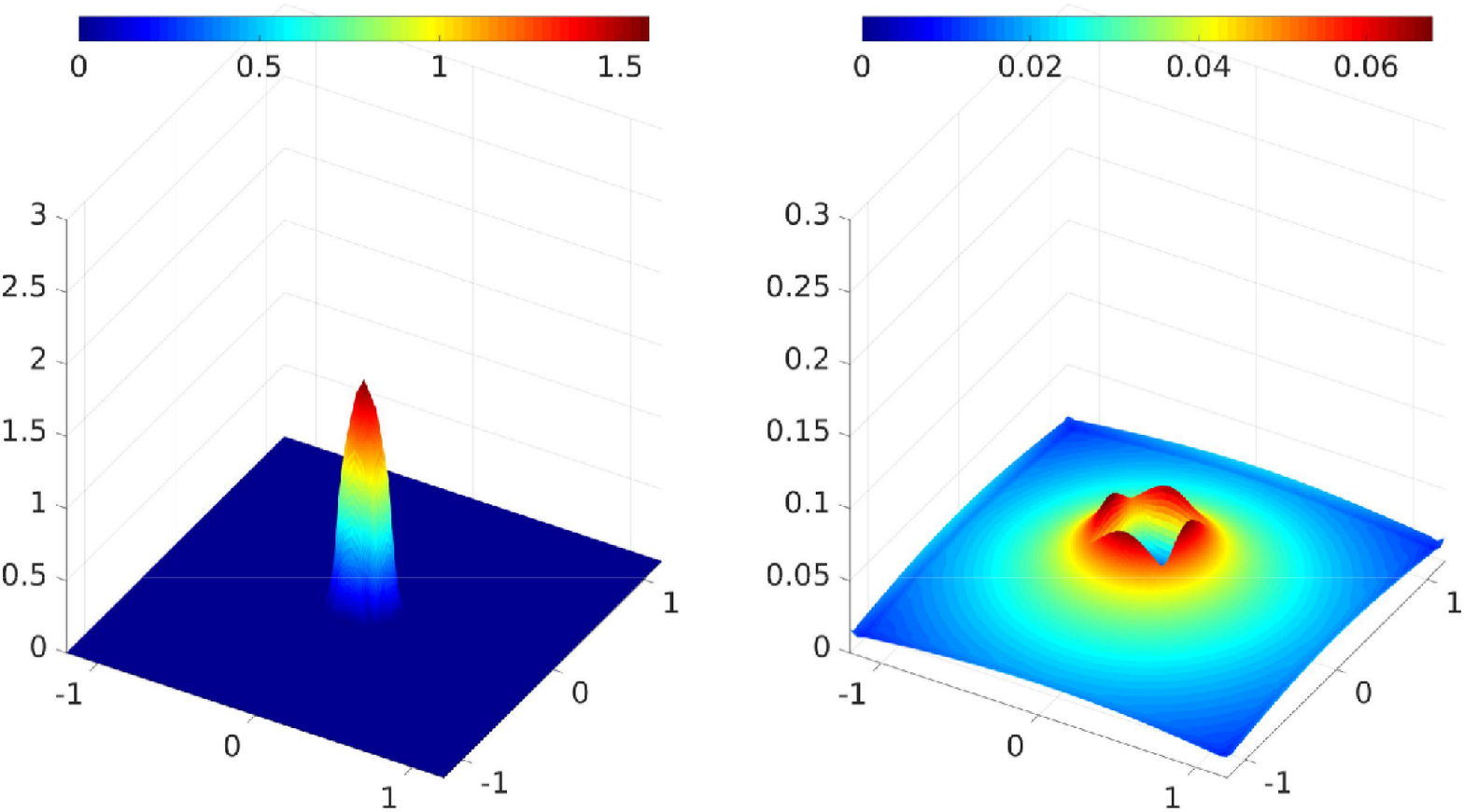} }
\subfigure[t=6]{
\includegraphics[width=9cm, height=4cm]{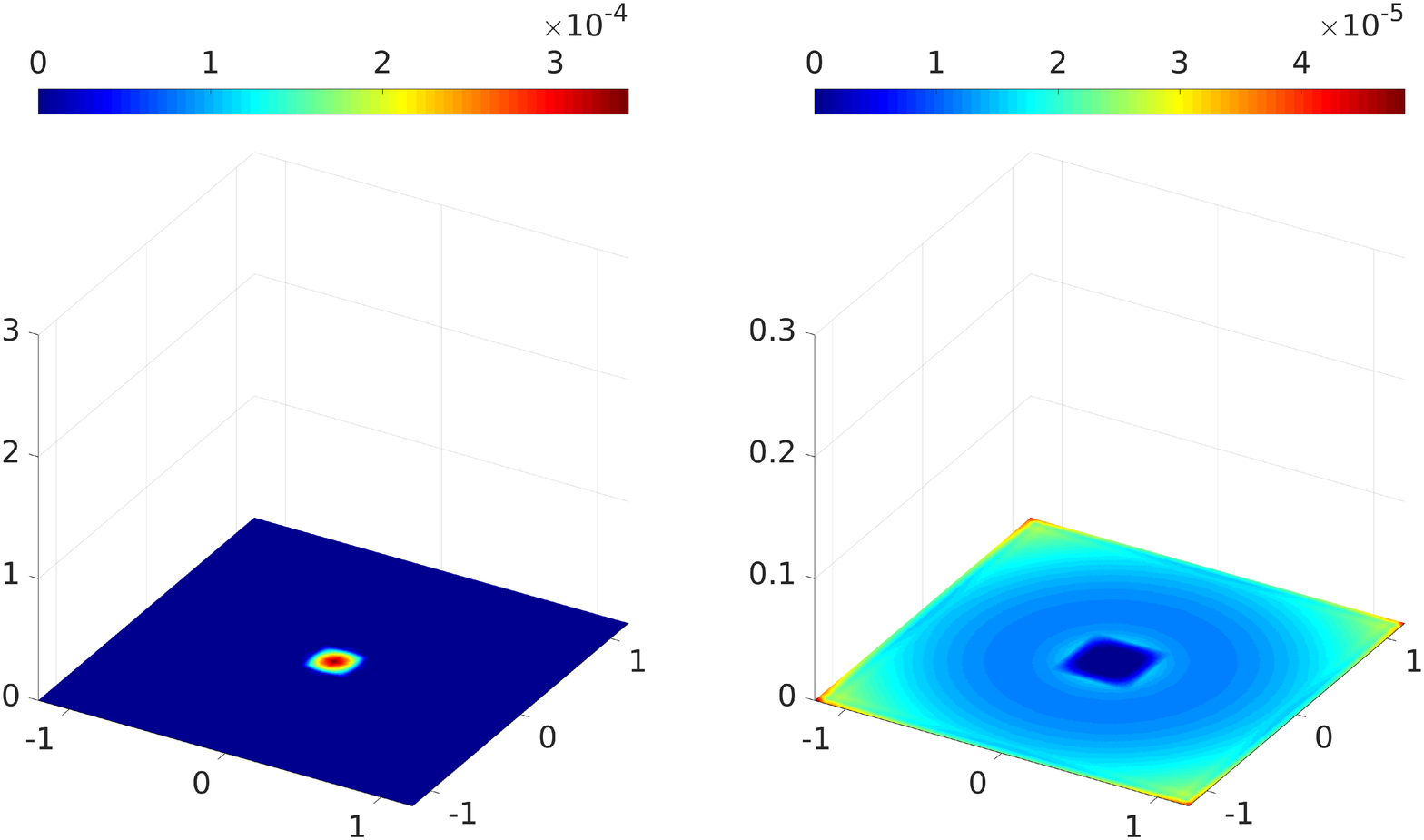} } }
\caption{\textcolor{black}{Evolution of $u$ and $\lambda$  for the punch problem on $G=[-1.2,1.2]^2\times\{0\}$, Example 30.} }
\label{Fig1}
\end{figure}
Because of the potentially low spatial regularity of the solution, which a priori only belongs to a Sobolev space with negative exponent, we do not consider the error of the numerical solutions in $L^2([0,T] \times \Gamma)$. As a weaker measure, we consider convergence in the energy norm defined by $V$.

Figure  \ref{VScreenError} shows the convergence of the numerical solutions in energy. \textcolor{black}{The energy, similar to the Example \ref{contact1}, shows a rate of convergence of $\alpha=0.76$. In terms of $h$ we have a rate of convergence of $2.28$.}
\begin{figure}[H]
\makebox[\linewidth]{
\includegraphics[width=12cm,height=6cm]{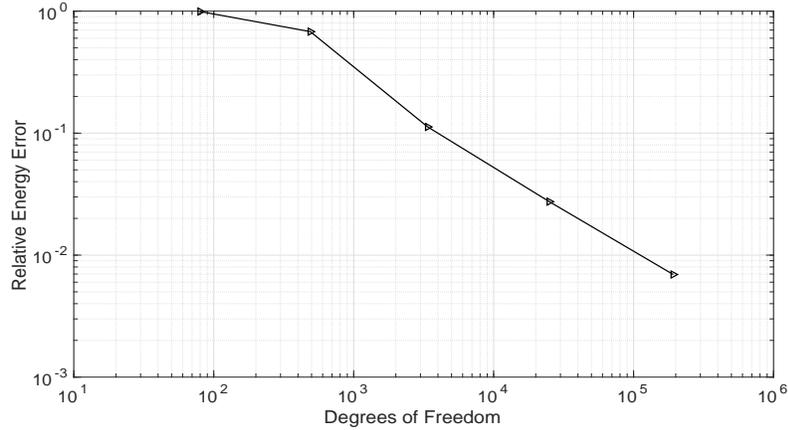} }
\caption{Relative energy error for the punch problem for fixed $\frac{\Delta t}{h}$, {Example \ref{punchex11}}.} 
\label{VScreenError}
\end{figure}

  \begin{figure}[H]
  \makebox[\linewidth]{
\subfigure[t=0.1]{
\includegraphics[width=9cm,
height=4cm]{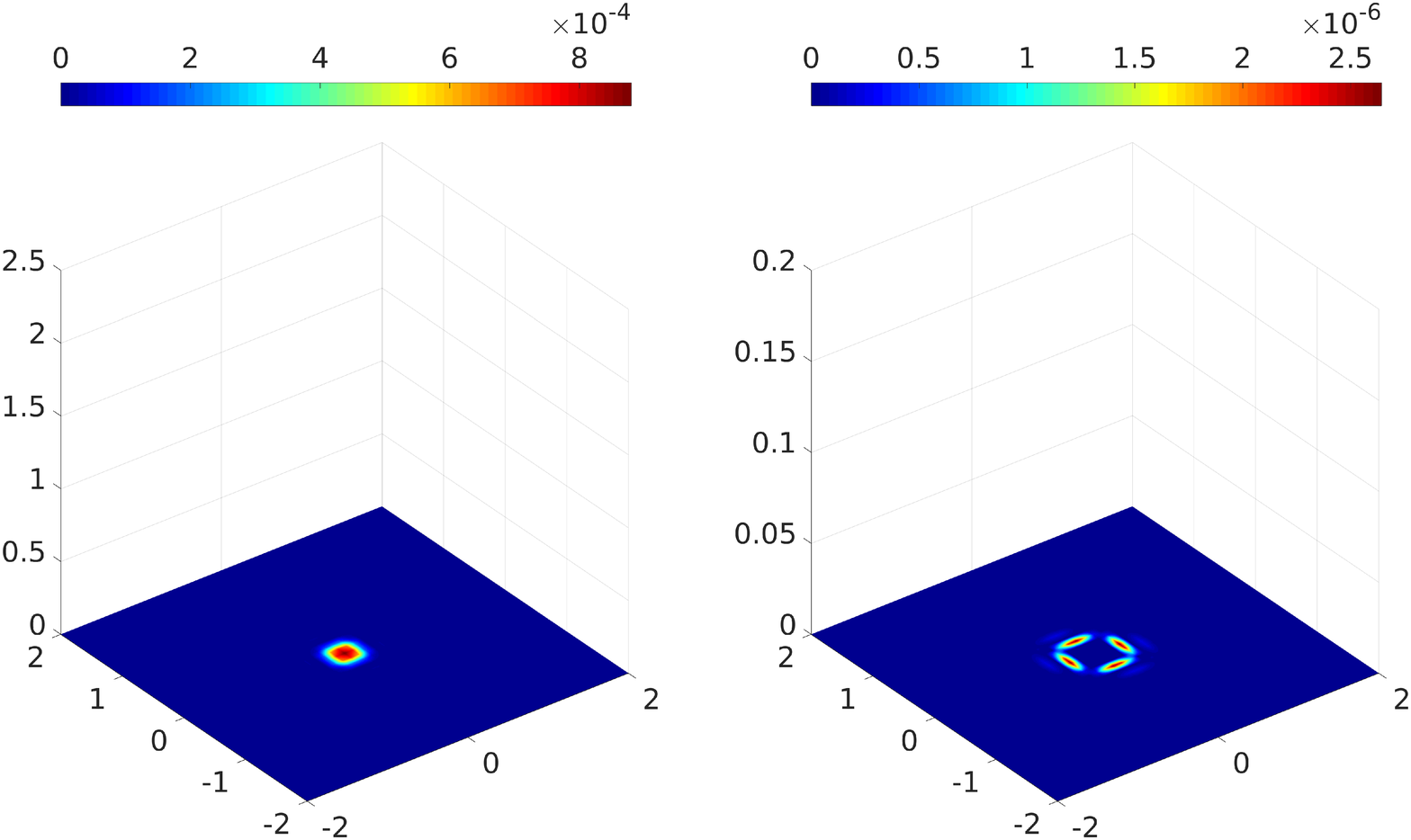} }
\subfigure[t=2]{
\includegraphics[width=9cm, height=4cm]{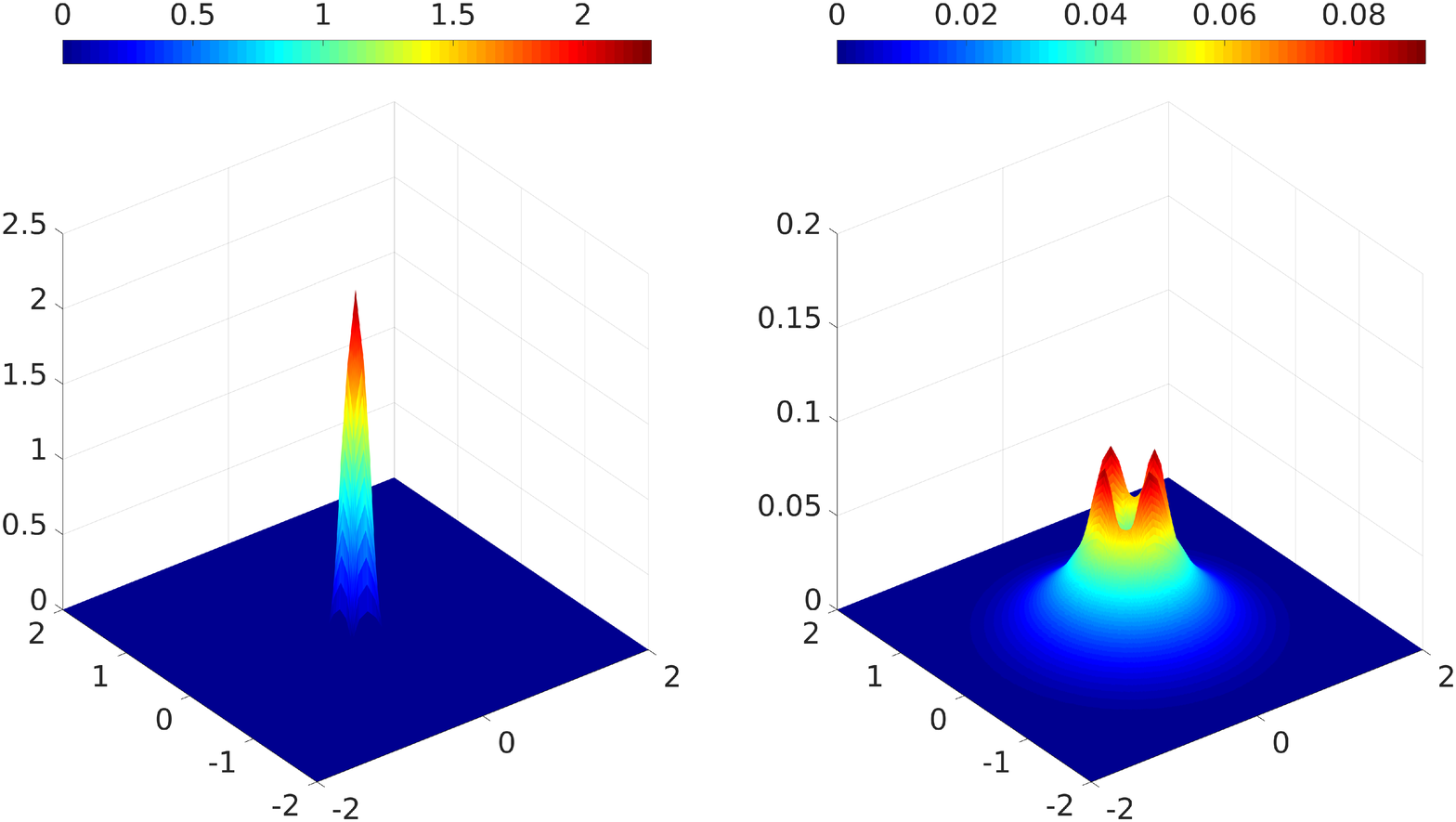} } }
\makebox[\linewidth]{
\subfigure[t=3]{
\includegraphics[width=9cm, height=4cm]{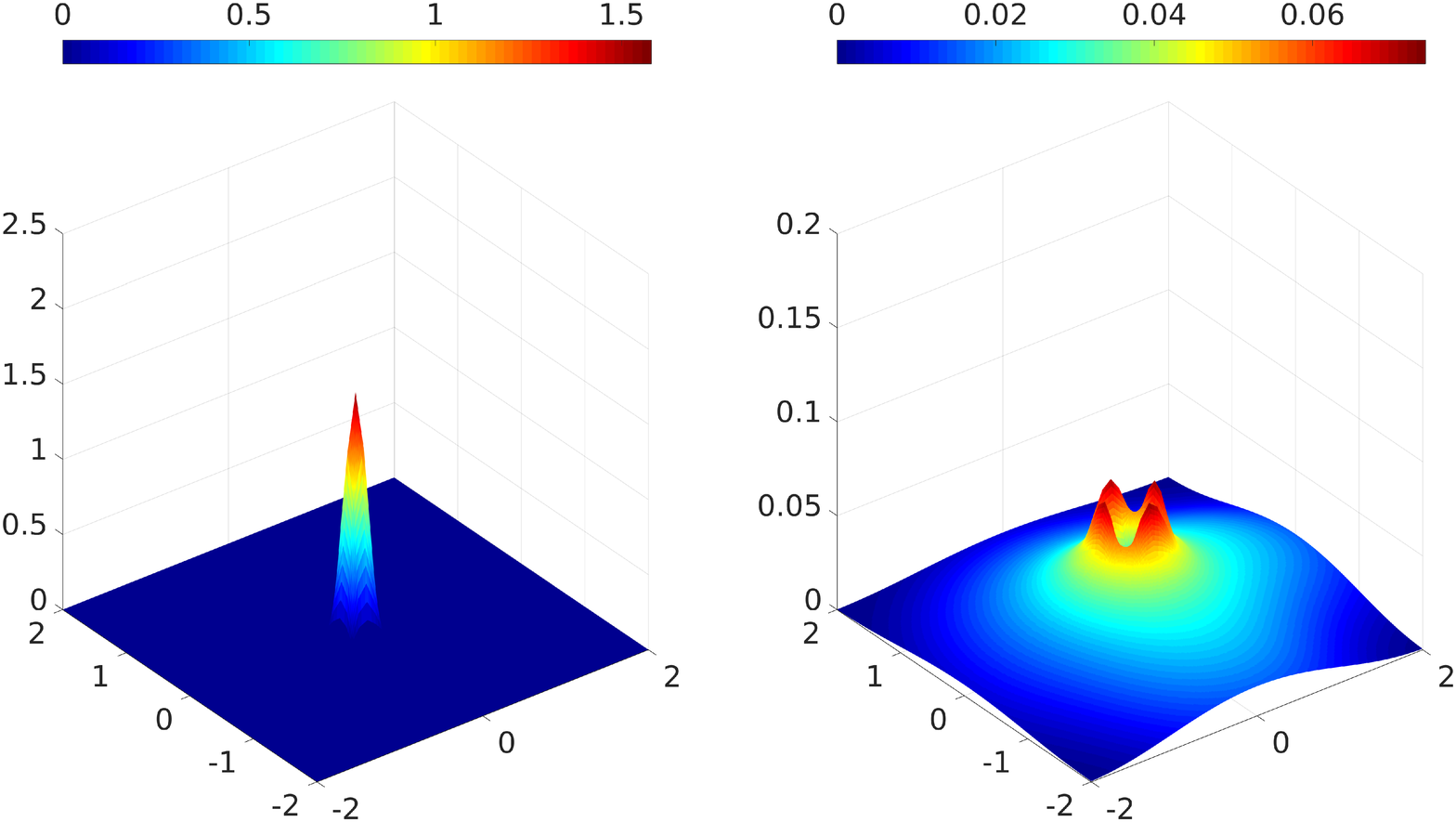} } 
\subfigure[t=3.6]{
\includegraphics[width=9cm, height=4cm]{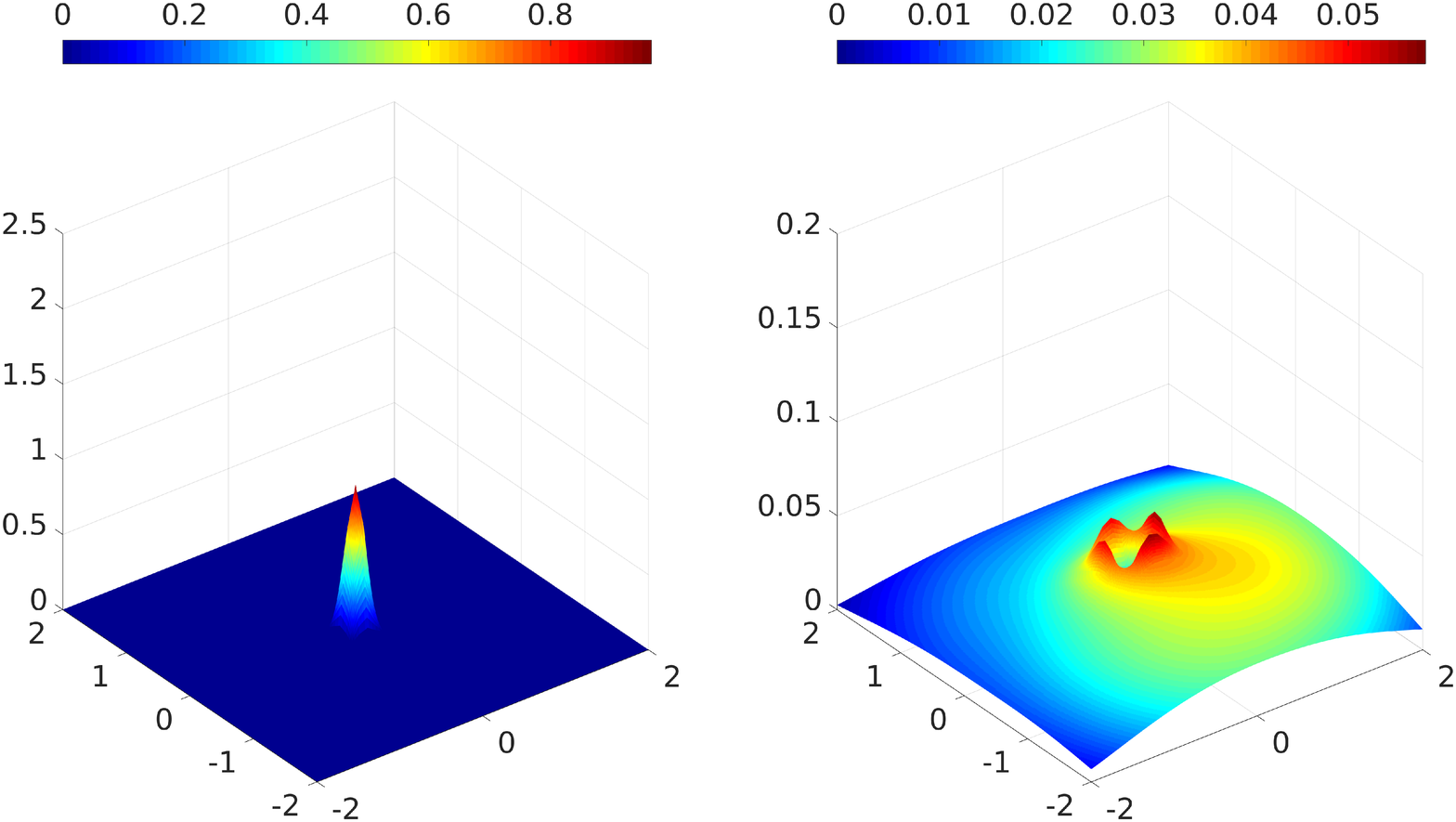} } }
\caption{\textcolor{black}{Evolution of $u$ and $\lambda$ in  $[-2,2]^2\times \{2\}$ for the punch problem on $[-2,2]^3$, Example 31.}}
\label{punchpic}
\end{figure}

We finally consider the punch problem with  the entire surface of the cube as contact area.
\begin{example}
\textcolor{black}{
For the punch problem \eqref{varVI} we choose $G=\Gamma$ to be the surface of $[-2,2]^3$. We set $T=3.6$  
As right hand side we consider the same function as in Example \ref{contact2},$$h(t,x)=e^{-2t}t^4\cos(2\pi x)\cos(2\pi y) \chi_{[-0.25,0.25]}(x) \chi_{[-0.25,0.25]}(y)\ ,$$ centered in the midpoint of the top, front and right face. We use the same ansatz and test functions as in Example 30. The benchmark energy is again computed by extrapolation.} 
\end{example}

The variational inequality is solved using the time-step Uzawa algorithm as in Example \ref{punchex11}. Figure \ref{punchpic} shows the solution $u_{\Delta t, h}$ to the punch problem and its Lagrange multiplier $\lambda_{\Delta t, h}$ on the surface of the cube \textcolor{black}{for $\Delta t=0.01$ and CFL ratio $\frac{\Delta t}{h}\approx 0.7$. The mesh consists of $19200$ triangles.} Again contact is observed for most times.

As in the previous example, in Figure \ref{VCubeError} we show the convergence in energy 
\textcolor{black}{for $\Delta t=0.075$ and $\frac{\Delta t}{h}\approx 0.53$}.
\textcolor{black}{ We here compute a convergence rate of roughly $\alpha= 0.9$ from the last $4$ points, respectively $2.7$ in terms of $h$. Because the convergence deviates from a straight line, the asymptotic convergence rate might differ slightly. Note  the  kink in the third data point corresponds to the kink in Figure \ref{l2ContactCuberelative}.}
\begin{figure}[H]
\makebox[\linewidth]{
\includegraphics[width=12cm, height=6cm]{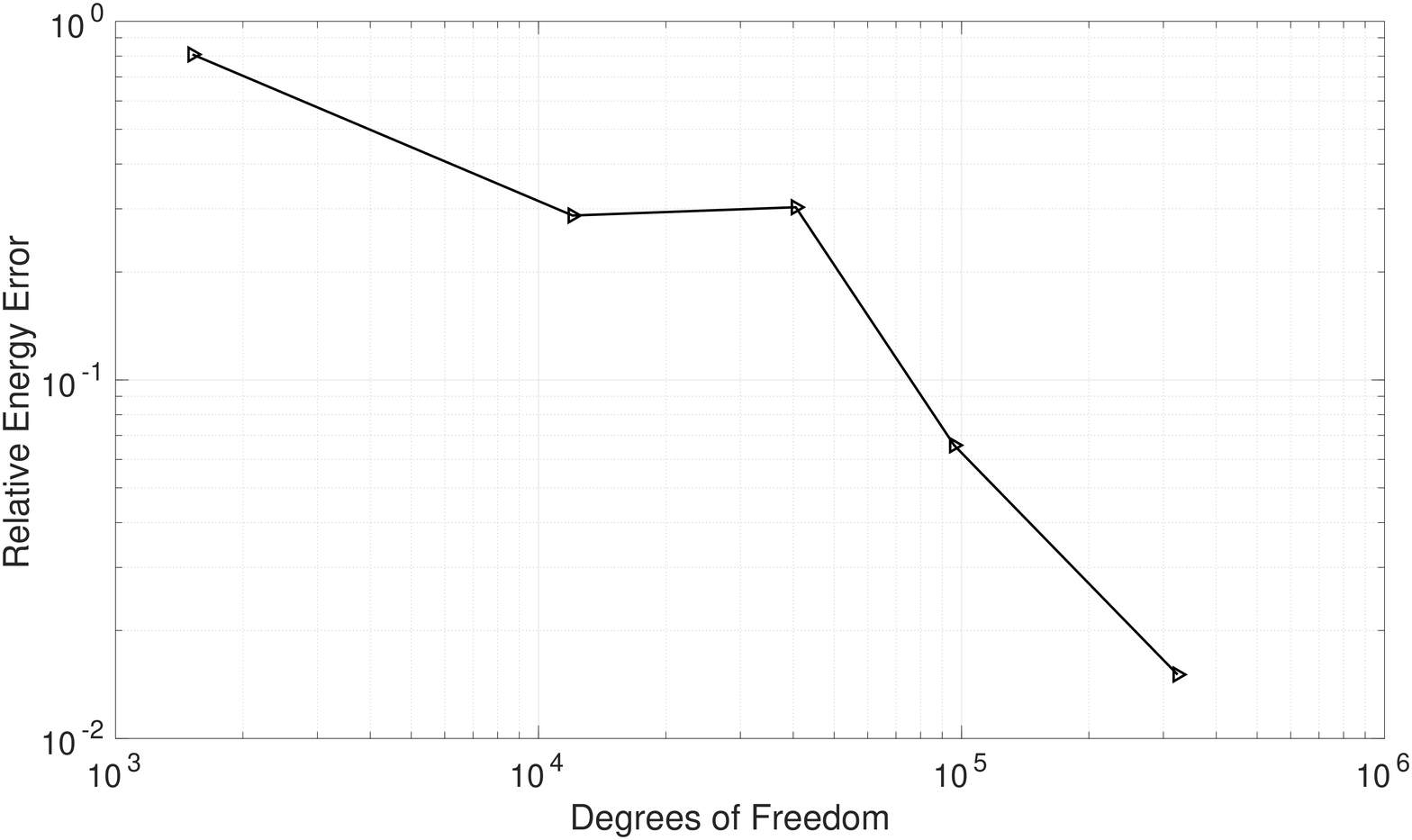} }
\caption{Relative error of the energy for the punch problem for fixed \textcolor{black}{$\frac{\Delta t}{h}$, Example 31}.  }
\label{VCubeError}
\end{figure}

\section{Conclusions}

In  this  work  we  propose and analyze a Galerkin boundary element method to solve dynamic contact problems for the wave equation. Boundary elements provide a natural and efficient formulation, as the contact takes place at the interface between two materials. \\

Analytically, we obtain a first a priori error analysis for a variational inequality involving the Dirichlet-to-Neumann  operator, as well as a similar analysis for a mixed formulation. The analysis and the stability of the method are crucially based on the weak coercivity of the formulation, and for the mixed method an inf-sup condition in space-time. The proof requires a flat contact area, as only in this case the existence of solutions to the continuous problem is known. Also a variational inequality and a mixed formulation for the single layer operator are considered, which do not require the approximation of the operator.\\

Numerical experiments demonstrate the efficiency and convergence of the proposed mixed method. A time-stepping Uzawa method for the solution of the variational inequality proves more efficient in practice, but also potentially less stable than a similar solver for the space-time system. The latter is shown to be provably convergent. As a key point, the numerical experiments indicate stability and convergence beyond flat geometries.\\

The current work provides a first, rigorous step towards efficient boundary elements for dynamic contact. Future work will focus on the a posteriori error analysis, which is essential for adaptive mesh refinements to resolve the singularities of the solution in space and time \cite{apost}, as well as on stabilized mixed space-time formulations \cite{banz, bf}. For applications to traffic noise \cite{banz2}, also the nonsmooth variational inequalities for frictional contact will be of interest.\\

\end{document}